\title{Connectoids II: existence of normal trees}
\author{Nathan Bowler
			\and
			Florian Reich}
\address{Universit\"at Hamburg, Department of Mathematics, Bundesstrasse 55 (Geomatikum), 20146 Hamburg, Germany}
\email{\{nathan.bowler, florian.reich\}@uni-hamburg.de}
\keywords{normal tree, connectivity, infinite graph, infinite digraph, infinite hypergraph, finitary matroid, bidirected graph}
\tikzset{
	vertex/.style={circle, draw, minimum size=1.5em},
	edge/.style={->, > = latex'}
}
\let\polishlcross=\l
\def\l{\ifmmode\ell\else\polishlcross\fi}
\let\emptyset=\varnothing
\let\theta=\vartheta
\let\rho=\varrho
\let\phi=\varphi
\def\NN{\mathbb N}
\def\cC{{\mathcal C}}
\def\cP{{\mathcal P}}
\def\cK{{\mathcal K}}
\newcommand{\Set}[1]{{\left\lbrace {#1} \right\rbrace}}
\def\set#1:#2{\Set{{#1} \colon {#2}}}
\newcommand{\Up}[1]{\lfloor #1 \rfloor}
\newcommand{\OUp}[1]{\mathring{\lfloor #1 \rfloor}}
\newcommand{\Down}[1]{\lceil #1 \rceil}
\newcommand{\ODown}[1]{\mathring{\lceil #1 \rceil}}
\newcommand{\cf}{\textrm{cf}}
\newcommand{\height}{\textrm{height}}
\theoremstyle{plain}
\newtheorem{thm}{Theorem}[section]
\newtheorem{prop}[thm]{Proposition}
\newtheorem{cor}[thm]{Corollary}
\newtheorem{lemma}[thm]{Lemma}
\newtheorem{problem}[thm]{Problem}
\theoremstyle{definition}
\theoremstyle{remark}
\newtheorem{claim}{Claim}
\def\lqedsymbol{\ifmmode$\lrcorner$\else{\unskip\nobreak\hfil
		\penalty50\hskip1em\null\nobreak\hfil$\rule{1.2ex}{1.2ex}$
		\parfillskip=0pt\finalhyphendemerits=0\endgraf}\fi}
\newenvironment{claimproof}[1][\proofname]
{%
	\proof[#1]%
}
{%
	\endproof%
}
\begin{document}
	
\begin{abstract}
	In this series, we introduce and investigate the concept of \emph{connectoids}, which captures the connectivity structure of various discrete objects such as undirected graphs, directed graphs, bidirected graphs, hypergraphs and finitary matroids.
	
	In the first paper, we developed a universal end space theory based on connectoids that unifies the existing end spaces of undirected and directed graphs.
	
	In this paper, we establish normal trees of connectoids as a natural generalisation of normal trees of undirected graphs, which are one of the most important tools in infinite graph theory.
	More precisely, we show that the existence of normal trees of connectoids can be characterised in the same way as for normal trees of undirected graphs:
	We extend Jung's famous characterisation via dispersed sets to connectoids, and prove that normal spanning trees exist if they exist in some neighbourhood of each end.
	Furthermore, we show that a connectoid has a normal spanning tree if and only if its groundset can be well-ordered in a certain way, called \emph{countable separation number}.
\end{abstract}

\maketitle

\section{Introduction}
\emph{Connectoids} are an abstract concept representing the connectivity structure of various discrete objects such as undirected graphs, directed graphs, bidirected graphs, hypergraphs and finitary matroids.
Let $S$ be a set and let $\mathcal{F}$ be a set of finite subsets of $S$ such that
\begin{enumerate}[label=(\roman*)]
	\item for every $F, F' \in \mathcal{F}$ with $F \cap F' \neq \emptyset$ the union $F \cup F'$ is also an element of $\mathcal{F}$, and
	\item $\{s\} \in \mathcal{F}$ for every $s \in S$ and $\emptyset \in \mathcal{F}$.
\end{enumerate}
We call a subset $C \subseteq S$ \emph{connected} if for every two elements $x, y \in C$ there is $F \in \mathcal{F}$ with $F \subseteq C$ and $x, y \in F$.
Moreover, we call the tuple $(S, \cC)$ the \emph{connectoid induced by $\mathcal{F}$}, where~$\cC$ is the set of connected subsets of $S$.

Given an undirected, directed or bidirected graph $G$, or a hypergraph $G$ with finite edges, we set $S:= V(G)$.
Let $\mathcal{F}$ be the set of vertex sets of all finite connected subgraphs of $G$, where `connected' refers to connectivity for undirected graphs and hypergraphs, and refers to strong connectivity for directed and bidirected graphs.
Then a subset of $S$ is connected in the connectoid induced by $\mathcal{F}$ if and only if it is the vertex set of a (strongly) connected subgraph of $G$.
Analogously, given a finitary matroid $(E, \mathcal{I})$, we set $S:= E$ and let $\mathcal{F}$ be the set of ground sets of all finite connected restrictions of $(E, \mathcal{I})$. Then the connected sets of the connectoid induced by $\mathcal{F}$ are precisely the ground sets of all connected restrictions of $(E, \mathcal{I})$.

In the first paper of this series \cite{connectoids1}, we developed a universal end space theory based on connectoids.
This theory unifies the existing end spaces of undirected graphs and directed graphs under omitting limit edges.
Furthermore, it provides a notion of ends for a variety of discrete objects that lacked a notion of ends.

The definition of ends is based on the following generalisation of rays.
A connected set~$N$ is called a \emph{necklace} if there exists a family $(H_n)_{n \in \NN}$ of finite connected sets such that $N = \bigcup_{n \in \NN} H_n$ and $H_i \cap H_j \neq \emptyset$ holds if and only if $|i - j| \leq 1$ for every $i, j \in \NN$.
Given a subset $S' \subseteq S$ we call a maximal connected subset of $S'$ a \emph{component} of $S'$ and let $\cK(S')$ be the set of components of $S'$.
We observe that for every finite set $X \subseteq S$ there is a component in $\cK(N \setminus X)$ that contains \emph{almost all}, i.e.\ all but finitely many, elements of $N$, and we refer to it as the \emph{$X$-tail} of $N$.
Two necklaces $N, N'$ are \emph{equivalent} if for every finite set $X \subseteq S$ the $X$-tails of $N$ and $N'$ are contained in the same component in $\cK(S \setminus X)$.
An equivalence class of necklaces under this relation is an \emph{end} of $(S, \cC)$ and we set \emph{$\Omega(S, \cC)$} to be the set of ends of~$(S, \cC)$.

In this paper, we turn our attention to normal trees, which are one of the most important tools in infinite graph theory.
A rooted tree $T$ in an undirected graph $G$ is \emph{normal} if for every connected subgraph $H$ of $G$ and every two $\leq_T$-incomparable elements $u, v \in V(H) \cap V(T)$ there is $w \in V(H) \cap V(T)$ with $w \leq_T u, v$, where $\leq_T$ refers to the tree-order of $T$~\cite{DiestelBook2016}*{Section 1.5}.

In the context of connectoids we define \emph{normal trees} as auxiliary graphs:
A \emph{weak normal tree} $T$ of a connectoid $(S, \cC)$ is a rooted, undirected tree $T$ with $V(T) \subseteq S$ such that
\begin{itemize}
	\item for every connected set $C \in \cC$ and every two $\leq_T$-incomparable elements $u, v \in C \cap V(T)$ there is $w \in C$ with $w \leq_T u, v$, and
	\item for every two $\leq_T$-comparable elements $u \leq_T v$ there is a connected set $C \in \cC$ containing $u$ and $v$ that avoids every element $w <_T u$~\cite{connectoids1}.
\end{itemize}
If additionally for every rooted ray $R$ in $T$ there exists a necklace that contains almost all elements of $V(R)$ we call $T$ a \emph{normal tree} of $(S, \cC)$~\cite{connectoids1}.
While weak normal trees have the same separation properties as normal trees of undirected graphs, the additional constraint of normal trees ensures that they represent ends in the same way as their undirected counterparts:
The ends of a connectoid $(S, \cC)$ are in one-to-one correspondence to the ends of every normal spanning tree of $(S, \cC)$ \cite{connectoids1}*{Theorem~1.2} and rayless normal trees approximate the ends of a connectoid \cite{connectoids1}*{Theorem~1.3}.

Given an undirected graph $G$ and its corresponding connectoid $(V(G), \cC)$, normal trees of~$(V(G),\cC)$ are not necessarily normal trees of $G$~\cite{connectoids1}:
Let $G$ be a comb with vertex set $V(G):= \{s_n, \ell_n: n \in \NN\}$ and edge set $E(G):=\{\{s_n, s_{n + 1}\}, \{s_n, \ell_n\}: n \in \NN\}$. Then the ray $R$ following the sequence $\ell_1, s_1, \ell_2, s_2, \ell_3, s_3, \dots$ is a normal tree of the corresponding connectoid $(V(G), \cC)$. But the ray~$R$ is not a normal tree in $G$ since $\{s_1, \ell_2\} \notin E(G)$.

However, the concepts of normal trees of $G$ and normal trees of $(V(G), \cC)$ coincide in their existence, i.e.\ there exists a normal tree in $G$ containing $U \subseteq V(G)$ if and only if there exists a normal tree in the corresponding connectoid $(V(G), \cC)$ containing $U$~\cite{connectoids1}:
Every normal tree in $G$ is also a normal tree in $(V(G),\cC)$ since its rooted rays form necklaces.
For the converse, let~$T$ be some normal tree of $(V(G), \cC)$.
Then $T$ is a normal tree containing $U$ in $G \cup T$.
Since the existence of normal trees is closed under taking connected subgraphs \cite{pitz2021proof}*{Theorem~1.2}, there exists a normal tree containing $U$ in $G$.

In this paper,
we show that the existence of normal trees of connectoids can be characterised in the same way as the existence of normal trees of undirected graphs.
This establishes normal trees of connectoids as a natural generalisation of normal trees of undirected graphs, and thus as a sensible notion of normal trees for directed and bidirected graphs, as well as hypergraphs and finitary matroids.

First, we extend Jung's famous characterisation via dispersed sets \cite{jung1969wurzelbaume}*{Theorem~6} to connectoids.
Given a connectoid $(S, \cC)$, a subset $S' \subseteq S$ is \emph{dispersed} if every necklace in $(S, \cC)$ has finite intersection with $S'$.
In fact, we show a stronger statement similar to Pitz' result for undirected graphs \cite{pitz2021quickly}*{Theorem~4}:
we prove that fat~$TK_{\aleph_0}$-dispersedness is sufficient for characterising the existence of normal trees.
In this context one can think informally of a fat~$TK_{\aleph_0}$ as a connectoid obtained from the fat~$TK_{\aleph_0}$ by replacing its edges with certain finite connected sets.

\begin{restatable}{thm}{Jung} \label{thm:jung}
	Let $(S, \cC)$ be a connected connectoid and $U \subseteq S$. Then the following are equivalent:
	\begin{enumerate}[label=(\arabic*)]
		\item\label{itm:normal_tree} for every $r \in S$ there is a normal tree rooted at $r$ that contains $U$,
		\item\label{itm:weak_normal_tree} there is a weak normal tree that contains $U$,
		\item\label{itm:quasi_weak_normal_tree} there is a tree $T$ containing $U$ such that for every $C \in \cC$ and every two $\leq_T$-incomparable elements $u, v \in C \cap V(T)$ there is $w \in C$ with $w \leq_T u, v$,
		\item\label{itm:countable_union} $U$ is a countable union of dispersed sets, and
		\item\label{itm:countable_union_ka} $U$ is a countable union of fat~$TK_{\aleph_0}$-dispersed sets.
	\end{enumerate}
\end{restatable}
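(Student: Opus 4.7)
My plan is to close the cycle $\ref{itm:normal_tree} \Rightarrow \ref{itm:weak_normal_tree} \Rightarrow \ref{itm:quasi_weak_normal_tree} \Rightarrow \ref{itm:countable_union} \Rightarrow \ref{itm:countable_union_ka} \Rightarrow \ref{itm:normal_tree}$. The first two implications are immediate from the definitions, since each condition weakens the one before it. Likewise $\ref{itm:countable_union} \Rightarrow \ref{itm:countable_union_ka}$ is automatic: every necklace inside a fat~$TK_{\aleph_0}$-subconnectoid is still a necklace in the ambient connectoid, so any dispersed set is in particular fat~$TK_{\aleph_0}$-dispersed.

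For $\ref{itm:quasi_weak_normal_tree} \Rightarrow \ref{itm:countable_union}$, I would stratify $U$ by tree-depth, setting $U_n := U \cap L_n$ with $L_n$ the $n$-th level of $T$, and show each $U_n$ is dispersed. If some necklace $N$ met $U_n$ in infinitely many vertices, these would be pairwise $\leq_T$-incomparable, so iterated application of the lower-bound clause of \ref{itm:quasi_weak_normal_tree} with $C = N$ would yield predecessors in $N$ at strictly smaller tree-depths. A pigeonhole on the finitely many possible predecessor-depths would then produce a single tree-vertex $w \in N$ lying below infinitely many elements of $N \cap U_n$, contradicting the structural rigidity of necklaces, since the upward cone of any fixed $w$ can meet only finitely many of the consecutive blocks $H_i$ in a necklace decomposition $N = \bigcup_i H_i$.

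The substantive direction is $\ref{itm:countable_union_ka} \Rightarrow \ref{itm:normal_tree}$. Given $U = \bigcup_{n \in \NN} U_n$ with each $U_n$ fat~$TK_{\aleph_0}$-dispersed and any root $r \in S$, I would build the normal tree by transfinite recursion, maintaining a weak normal tree $T_\alpha$ rooted at $r$. Successor steps attach points from the $U_n$'s in a dovetailed enumeration, using the bonding closure \ref{itm:bonding_closed} to supply the comparability connected sets demanded by the second bullet of the weak-normal-tree definition, while limit steps take unions. The crux is then to verify the extra clause that every rooted ray in the resulting tree is, up to a finite tail, a necklace: if this failed on some ray, I would show that the comparability witnesses along that ray can be glued via \ref{itm:bonding_closed} into a fat~$TK_{\aleph_0}$-substructure whose branches meet some $U_n$ infinitely, contradicting its fat~$TK_{\aleph_0}$-dispersedness.

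The principal obstacle will be this last extraction step, which must translate the failure of the necklace condition along a single ray into a genuine fat~$TK_{\aleph_0}$-witness intersecting some $U_n$ infinitely. This is where the strengthening from \ref{itm:countable_union} to \ref{itm:countable_union_ka} bites: a simple failure-of-necklace argument at a single ray is not enough, and one needs to organise the branching of auxiliary finite connected sets around the ray into a true $TK_{\aleph_0}$-like skeleton. I expect the construction will lean heavily on the structural analysis of finite connected sets and the bonding closure developed in the first paper of the series.
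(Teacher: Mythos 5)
Your cycle of implications matches the paper's, and the easy steps ((1)$\Rightarrow$(2)$\Rightarrow$(3), and the reduction of (4)$\Rightarrow$(5) to extracting a necklace from a fat~$TK_{\aleph_0}$ whose branch sets all meet $U$) are fine. But two of your steps have genuine gaps. In (3)$\Rightarrow$(4) your stratification by levels is the right idea, and the pigeonhole producing a single $w\in N\cap V(T)$ lying below infinitely many elements of $N\cap D_n$ is correct (all lower bounds of pairs containing a fixed $u_1$ lie in the finite chain $\Down{u_1}_T$). However, your closing claim --- that the upward cone of a fixed $w$ can meet only finitely many blocks $H_i$ of a necklace decomposition --- is false: a necklace can lie entirely inside $\Up{w}_T$ (indeed, in the paper's argument the tail of $N$ ends up confined to $\Up{t_n}_T$ for every $n$, the exact opposite of your rigidity claim). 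The correct finishing move is different: find two elements of $N\cap D_n$ both lying in the $F$-tail of $N$ for a suitable finite set $F$ containing all their common $\leq_T$-lower bounds (e.g.\ $F=\Down{w}_T$ after arranging that the two elements diverge at or below $w$); since the tail is a connected set avoiding $F$, condition (3) applied to the tail yields a lower bound inside the tail, a contradiction. The paper instead builds a rooted ray $(t_n)$ with the tail of $N$ eventually confined to $\Up{t_{n+1}}_T$, which misses $D_n$; either repair works, but your stated reason does not.

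The more serious gap is in (5)$\Rightarrow$(1). You locate the fat~$TK_{\aleph_0}$ extraction at the wrong place: in the paper the necklace condition along each rooted ray $R$ is \emph{not} obtained from fat~$TK_{\aleph_0}$-dispersedness at all, but from the connectivity of the auxiliary sets $\Theta(V(R))\cap K_x^T$ built during the recursion. What fat~$TK_{\aleph_0}$-dispersedness is actually needed for is the claim you do not address: that the constructed tree \emph{covers} $U$. A greedy construction can leave a point of $U$ forever inside a component whose neighbourhood grows into an infinite ray, and ruling this out is where the fat~$TK_{\aleph_0}$ is built. Moreover, your plan omits the device that makes any such extraction possible: one must fix in advance, for every pair of finite sets $X,Y$ admitting only countably many internally disjoint $X$--$Y$~links, a maximal such family, and greedily absorb these links into the sets $\Theta(v)$ along the tree. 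Only then does one know that any link between finite subsets of $\Theta(V(R))$ that internally avoids $\Theta(V(R))$ forces \emph{uncountably} many internally disjoint links --- which is what certifies the links of the fat~$TK_{\aleph_0}$ one builds around an uncovered component. Without this bookkeeping, ``gluing the comparability witnesses via the bonding axiom'' gives no control on the number of internally disjoint links, and the contradiction with fat~$TK_{\aleph_0}$-dispersedness cannot be reached. As written, this direction is a sketch of intent rather than a proof.
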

\noindent
We will deduce from \cref{thm:jung} that the existence of normal trees is closed under taking certain substructures.

Second, we show that a connectoid $(S, \cC)$ has a normal spanning tree if for every end $ \omega \in \Omega(S, \cC)$ there exists a ``neighbourhood'' of $\omega$ that has a normal spanning tree, which generalises a result for undirected graphs~\cite{kurkofka2021approximating}*{Theorem~1}.
For every finite set $X \subseteq S$ let $K(X, \omega)$ be the unique component in $\cK(S \setminus X)$ containing a necklace of $\omega$.
More precisely, we show:

\begin{restatable}{thm}{CharacterisationViaEnds} \label{thm:ends}
	Let $(S, \cC)$ be a connected connectoid and $U \subseteq S$.
	Then there is a normal tree containing $U$ if and only if for every end $\omega \in \Omega(S, \cC)$ there is a finite set $X_\omega \subseteq S$ such that the induced subconnectoid on $K(X_\omega, \omega)$ has a normal tree containing $U \cap K(X_\omega, \omega)$.
\end{restatable}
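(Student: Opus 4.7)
The plan is to prove the nontrivial backward direction via \cref{thm:jung}: it suffices to express $U$ as a countable union of dispersed subsets of $(S, \cC)$. The forward direction is immediate on setting $X_\omega = \emptyset$ for every end $\omega$, since by connectedness of $(S, \cC)$ we have $K(\emptyset, \omega) = S$, so the hypothesised global normal tree already witnesses the property.

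For the backward direction, I would first apply the rayless approximation theorem \cite{connectoids1}*{Theorem~1.3} to the family $(X_\omega)_{\omega \in \Omega(S, \cC)}$ supplied by the hypothesis, producing a rayless normal tree $T_0$ of $(S, \cC)$ such that every end $\omega$ lies in a component of $S \setminus V(T_0)$ contained in $K(X_\omega, \omega)$. Since $T_0$ is rayless, $V(T_0)$ is dispersed in $(S, \cC)$.

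For each infinite component $C$ of $S \setminus V(T_0)$ containing some end, pick such an end $\omega(C)$; by construction $C \subseteq K(X_{\omega(C)}, \omega(C))$, so the hypothesis provides a normal tree on the latter subconnectoid containing $U \cap K(X_{\omega(C)}, \omega(C))$. The substructure closure of normal-tree existence (a corollary of \cref{thm:jung}) then yields a normal tree on the induced subconnectoid on $C$ containing $U \cap C$, and a further application of \cref{thm:jung} inside $C$ writes $U \cap C = \bigcup_{n \in \NN} D_n^C$ with each $D_n^C$ dispersed in $(C, \cC\restrict C)$. Components $C$ admitting no necklace are handled by setting $D_0^C := U \cap C$ and $D_n^C := \emptyset$ for $n \geq 1$, which is vacuously dispersed. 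Putting $D_n := \bigcup_C D_n^C$, each $D_n$ is dispersed in $(S, \cC)$: any necklace $N$ meets the dispersed set $V(T_0)$ only finitely, so some tail of $N$ lies in a single component $C^*$, whence $N \cap D_n$ is the union of a finite initial portion and the finite set $N \cap D_n^{C^*}$. Therefore $U = (U \cap V(T_0)) \cup \bigcup_n D_n$ is a countable union of dispersed sets, and \cref{thm:jung} delivers the desired normal tree containing $U$.

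The main obstacle I anticipate is obtaining $T_0$ with the precise component-refinement property $C \subseteq K(X_{\omega(C)}, \omega(C))$. If \cite{connectoids1}*{Theorem~1.3} only separates individual ends rather than refining the given cover by the $K(X_\omega, \omega)$, then $T_0$ will need to be enlarged, presumably via a transfinite construction that successively absorbs each $X_\omega$ while preserving raylessness; verifying that such an enlargement remains rayless and normal (and thus its vertex set remains dispersed) is where the real technical work lies.
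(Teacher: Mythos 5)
Your argument is correct, and it shares its first half with the paper's proof but diverges in the second. Both proofs start by applying \cref{thm:approx} to the collection $\{K(X_\omega,\omega)\colon \omega\in\Omega(S,\cC)\}$ to get a rayless normal tree $T_0$ whose deletion leaves only components contained in members of that collection --- note that \cref{thm:approx} as stated already delivers exactly this refinement property, so the ``main obstacle'' you anticipate in your last paragraph does not arise; there is no need for any transfinite enlargement of $T_0$. (One small imprecision: you should take, for each component $C$ of $S\setminus V(T_0)$, the element $K(X_{\omega'},\omega')$ of the collection that \cref{thm:approx} guarantees contains $C$, rather than first choosing an end ``living in'' $C$ and asserting that $C\subseteq K(X_{\omega(C)},\omega(C))$ for that particular end; the theorem does not promise the latter, but the former is all you use.) From there the paper proceeds differently: it observes via \cref{prop:neighbourhood} and \cref{prop:finite_neighbourhood} that every component of $S\setminus V(T_0)$ has finite adhesion to $V(T_0)$ because $T_0$ is rayless, and then glues the normal trees of the components directly onto $T_0$ using \cref{prop:extension_normal_tree}. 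You instead convert everything into dispersed sets --- $V(T_0)$ is dispersed since $T_0$ is rayless, each $U\cap C$ is a countable union of sets dispersed in the induced subconnectoid on $C$ by \cref{thm:jung}, and your tail argument (a necklace meets $V(T_0)$ finitely, so a tail lies in a single component and the rest is finite) correctly lifts dispersedness from the components to $(S,\cC)$ --- and then invoke \cref{thm:jung} once more globally. Your route trades the extension machinery (\cref{prop:extension_normal_tree}, \cref{prop:finite_neighbourhood}) for two extra passes through Jung's characterisation; the paper's gluing is more direct and also keeps control of the root and of the tree's shape, whereas yours is arguably more self-contained once \cref{thm:jung} is in hand. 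Your forward direction via $X_\omega=\emptyset$ is also fine and marginally simpler than the paper's appeal to closure under subconnectoids.
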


Third, we transfer the definition of tree-decomposition for directed graphs~\cite{johnson2001directed} to connectoids and generalise a well-known characterisation of the existence of normal trees (see \cite{heuer2017excluding}*{Lemma~2.2} for the forward direction):
\begin{restatable}{lemma}{characterisationTreeDecomp}\label{lem:characterisation_tree_decomp}
	Let $(S, \cC)$ be a connectoid.
	Then there exists a normal spanning tree of $(S, \cC)$ if and only if there exists a tree-decomposition of $(S, \cC)$ into finite parts.
\end{restatable}

Moreover, we prove that the existence of a normal spanning tree of $(S, \cC)$ is equivalent to the existence of a certain well-order of $S$, which is inspired by Pitz' characterisation via countable colouring number in the context of undirected graphs~\cite{pitz2021proof}*{Theorem 1.1}.
A connected connectoid $(S, \cC)$ has \emph{countable separation number} if there exists a well-order $\leq$ of~$S$ such that for every $s \in S$ there exists a finite set $X \subseteq \{t \in S: t < s \}$ with the property that the component in $\cK(S \setminus X)$ containing $s$ avoids $\{t \in S: t < s \}$.

\begin{restatable}{thm}{CountableSeparationNumber}\label{thm:countable_separation_number}
	A connected connectoid $(S, \cC)$ has a normal spanning tree if and only if $(S, \cC)$ has countable separation number.
\end{restatable}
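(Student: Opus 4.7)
The plan is to prove both implications by reducing to the dispersed-set characterisation given in \cref{thm:jung}.

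For $(\Leftarrow)$, I would fix for each $s \in S$ a finite separator $X_s \subseteq \{t : t < s\}$ and define a rank function $\mathrm{rk}\colon S \to \mathrm{Ord}$ by transfinite recursion along $\leq$ via $\mathrm{rk}(s) = \sup\{\mathrm{rk}(x)+1 : x \in X_s\}$; since $X_s$ is finite, this supremum is a maximum. A transfinite induction on $\leq$ then shows that every rank is a natural number, because $\mathrm{rk}(s)$ is one more than a maximum of finitely many natural numbers. This yields a countable partition $S = \bigsqcup_{n \in \NN} D_n$ with $D_n := \mathrm{rk}^{-1}(n)$. I would then show by induction on $n$ that each $D_n$ is dispersed: the base case $D_0$ contains at most the $\leq$-minimum by connectedness of $(S, \cC)$, and for the inductive step, given a necklace $N$, the set $F := N \cap (D_0 \cup \cdots \cup D_{n-1})$ is finite by induction, and for each $Y \subseteq F$ the elements $s < s'$ of $D_n \cap N$ with $X_s \cap N = X_{s'} \cap N = Y$ must lie in distinct components of $N \setminus Y$ (since $X_{s'}$ separates $s'$ from $s$ in $S$, and hence also in $N$); as $N \setminus Y$ has only finitely many components and there are only finitely many $Y \subseteq F$, the intersection $D_n \cap N$ is finite. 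Applying \cref{thm:jung} then produces a normal spanning tree.

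For $(\Rightarrow)$, starting from a normal spanning tree, \cref{thm:jung} yields a disjoint decomposition $S = \bigsqcup_{n \in \NN} S_n$ into dispersed sets. The key structural observation is that the subconnectoid induced on each $S_n$ contains no infinite necklace, since any such would be a necklace of $(S, \cC)$ lying entirely in $S_n$, contradicting dispersion. In a connectoid lacking infinite necklaces, every well-order enjoys finite separation; this follows from a compactness/iteration argument showing that the failure of finite separation at some vertex $s$ yields an infinite necklace from $s$ touching its predecessors infinitely often. Analogously, for each $s \in S_n$ a finite separator from the predecessor-layers $\bigcup_{m < n} S_m$ exists in $(S, \cC)$, since this finite union of dispersed sets is itself dispersed. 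I would build the global well-order by concatenating well-orders of the $S_n$ in layer order and combining the two kinds of separators to obtain a global finite separator at each $s$.

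The main obstacle lies in the $(\Rightarrow)$ direction: when separating $s \in S_n$ from its within-layer predecessors, a connection in $(S, \cC)$ could detour through later layers $S_m$ with $m > n$ and so evade any separator chosen within $S_n$. I expect to address this by taking the within-layer well-order of $S_n$ coherently with the tree $T$ — for instance, as a linear extension of the restriction of $\leq_T$ to $S_n$ — so that any such detour would trace out a necklace or rooted ray of $T$ contradicting the dispersion of the lower layers, allowing the relevant portion of the detour to be absorbed into the finite global separator.
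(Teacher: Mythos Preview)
Your $(\Leftarrow)$ argument is correct and considerably more elementary than the paper's. Where the paper proves this direction by transfinite induction on $|S|$, invoking normal partition trees (\cref{thm:existence_normal_partition_tree}), the barricade and Aronszajn-tree obstructions of \cref{sec:countable_separation_number}, and the decomposition lemma (\cref{thm:decomp}) before finally appealing to \cref{thm:jung}, your rank-function argument goes straight to \cref{thm:jung}. The two points that deserve checking both hold: for $s \in D_n$ one has $X_s \subseteq D_0 \cup \cdots \cup D_{n-1}$ since every $x \in X_s$ satisfies $\mathrm{rk}(x) < \mathrm{rk}(s)$; and $N \setminus Y$ has only finitely many components because $Y$, being finite, meets only finitely many beads $H_1, \ldots, H_k$ of any fixed witness, so every component of $N \setminus Y$ either contains the connected tail $\bigcup_{m > k} H_m$ or lies inside the finite set $\bigcup_{m \leq k} H_m$. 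What the paper's longer route buys is the decomposition lemma itself, which may have independent interest, but for \cref{thm:countable_separation_number} alone your argument is the cleaner one.

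Your $(\Rightarrow)$, however, goes astray. The assertion that in a connectoid without necklaces \emph{every} well-order enjoys finite separation is false: take the star with centre $c$ and $\omega_1$ leaves, with the well-order placing $c$ last; then $c$ cannot be finitely separated from its predecessors, yet the star contains no necklace. More to the point, the whole layer apparatus is unnecessary. The paper dispatches $(\Rightarrow)$ in two lines: take any well-order $\leq$ of $S$ extending the tree-order $\leq_T$; then for each $s$ the finite set $\ODown{s}_T \subseteq \ODown{s}_\leq$ serves as separator, since by \cref{prop:equivalence_weak_normal_tree} the component of $s$ in $S \setminus \ODown{s}_T$ equals $\Up{s}_T \subseteq \Up{s}_\leq$. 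Your closing hint---a linear extension of $\leq_T$---is exactly this idea; just apply it globally rather than per layer and discard the rest.
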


The paper is organised as follows.
We discuss basic properties of necklaces and (weak) normal trees  in \cref{sec:prelims}.
In \cref{sec:jung} we prove \cref{thm:jung} and show \cref{thm:ends} in \cref{sec:ends}.
In \cref{sec:tree_decomposition} we introduce tree-decompositions for connectoids and prove~\cref{lem:characterisation_tree_decomp}.
As a preparation for the proof of \cref{thm:countable_separation_number} we introduce normal partition trees in \cref{sec:normal_substructure} and study countable separation number in \cref{sec:countable_separation_number}.
We show in \cref{sec:decomposition} that connectoids can be decomposed into specific subconnectoids of finite adhesion and prove \cref{thm:countable_separation_number} in \cref{sec:characterisation_countable_separation_number}.
Finally, we present open problems in \cref{sec:open_problems}.

\section{Preliminaries} \label{sec:prelims}
For standard graph-theoretic notations we refer to Diestel's book \cite{DiestelBook2016}.
Let $\NN:= \{1, 2, 3, \dots \}$ and $\NN_0 := \{0\} \cup \NN$.
Given a partial order $\leq$ of a set $A$, a set $B \subseteq A$ is \emph{cofinal} in $A$ if for every $a \in A$ there is $b \in B$ with $a \leq b$.
The \emph{cofinality} of $A$ is defined as $\cf(A):= \inf \{|B|: B \text{ is cofinal in } A\}$.
A cardinal $\kappa$ is \emph{regular}, if $\kappa = \cf(\kappa)$ and \emph{singular} otherwise.
A rooted tree $T$ contains $A \subseteq V(T)$ \emph{cofinally}, if $A$ is cofinal in $V(T)$ with respect to the tree-order of $T$.
Given a partial order $\leq$ of a set $S$ and some element $s \in S$ we define $\Up{s}_\leq:= \{ x \in S: x \geq s \}$ and $\OUp{s}_\leq:= \Up{s}_\leq \backslash \{s\}$.
The sets $\Down{s}_\leq$ and $\ODown{s}_\leq$ are defined analogously.

An \emph{order tree} is a partial order with a unique minimal element whose down-closures are well-ordered.
For simplicity, we write $t \in T$ instead of $t \in V(T)$ for a tree $T$.
Let $T$ be an order tree or a rooted tree.
We simply write $\Up{s}_T, \OUp{s}_T, \Down{s}_T$ and $\ODown{s}_T$ for the sets $\Up{s}_{\leq_T}, \OUp{s}_{\leq_T}, \Down{s}_{\leq_T}$ and $\ODown{s}_{\leq_T}$, respectively, where $\leq_T$ refers to the tree-order of $T$.
We call every $\leq_T$-down-closed set of pairwise $\leq_T$-comparable elements in $V(T)$ a \emph{branch} of $T$.
The \emph{height} of $t \in T$ is the order type of $\ODown{t}_T$.
If the height of $t \in T$ is a limit, we also refer to $t$ as a \emph{limit}.
Furthermore, for every ordinal $\alpha$, the set $T^\alpha:= \{t \in T: t \text{ has height } \alpha \}$ is a \emph{level} of $T$.

Furthermore, we call a sequence $(S_i)_{i < \kappa}$ of sets \emph{continuous} if $S_\alpha = \bigcup_{i < \alpha} S_i$ for every limit $\alpha < \kappa$.

\begin{prop}\label{prop:well-order_respecting}
	Let $S$ be a set and let $\preceq$ be a partial order of $S$ such that $\Down{s}_\prec$ is finite for every $s \in S$.
	Then there is a well-order of $S$ of order type $|S|$ that respects $\preceq$.
\end{prop}
\begin{proof}
	Let $\leq$ be a well-order of $S$ of order type $|S|$.
	We construct an injective function $f: S \rightarrow S$ such that $f(s) \leq f(s')$ for every $s \preceq s' \in S$.
	Then $\leq$ is a well-order on the image of $f$ that respects $\preceq$.
	In particular, its order type is at most $|S|$ and thus precisely $|S|$.
	
	Let $S(i)$ be the initial segment of $(S, \leq)$ of order type $i$ for every $i \leq |S|$.
	We construct a sequence $(f_i)_{i \leq |S|}$ of injective functions $f_i: S \rightarrow S$ such that $f_i(s) \leq f_i(s')$ for every $s \preceq s' \in S$ with $f_i(s)\in S(i)$ and such that for every $s \in S$ and for every limit $\alpha < |S|$ there is $j < \alpha$ such that $(f_i(s))_{j \leq i \leq \alpha}$ is constant.
	
	Set $f_0$ to be the identity.
	We assume that $f_i$ has been defined for some $i < |S|$ and let $x \in S$ such that $\ODown{x}_{\leq}=S(i)$.
	If $x$ is not in the image of $f_i$, set $f_{i+1}:=f_i$.
	Otherwise, let $a \in S$ with $f_i(a)=x$ and let $b \in \Down{a}_{\preceq}$ some $\preceq$-minimal element with the property $f_i(b) \notin S(i)$.
	Note that possibly $a = b$.
	Let $f_{i+1}$ be obtained from $f_i$ by setting $f_{i+1}(a):= f_i(b)$ and $f_{i+1}(b):= f_i(a)$.
	Then $f_{i+1}(s) \leq f_{i+1}(s')$ for every $s \preceq s' \in S$ with $f_i(s)\in S(i+1)$.
	
	Given a limit $\alpha \leq |S|$, we assume that $f_i$ has been defined for every $i < \alpha$.
	Then, for every $s \in S$, there are at most $|\Down{s}_{\preceq}|$ many $i<\alpha$ such that $f_i(s) < f_{i+1}(s)$ and at most one $i < \alpha$ such that $f_i(s) > f_{i+1}(s)$.
	Thus there is $j < \alpha$ such that $(f_i(s))_{j \leq i < \alpha}$ is constant since for every limit $\beta < \alpha$ there is $j' < \beta$ such that $(f_i(s))_{j' \leq i \leq \beta}$ is constant by assumption.
	 We set $f_\alpha(s):= f_j(s)$.
	Note that $f_\alpha$ has the property that $f_\alpha(s) \leq f_\alpha(s')$ for every $s \preceq s' \in S$ with $f_\alpha(s)\in S(\alpha)$.
	Finally, $f:=f_{|S|}$ is as desired.
\end{proof}

\subsection{Connectoids and substructures}
Given a set $S$ and a set $\mathcal{F}$ of finite subsets of $S$ such that
\begin{enumerate}[label=(\roman*)]
	\item\label{itm:bonding_closed} for every $F, F' \in \mathcal{F}$ with $F \cap F' \neq \emptyset$ the union $F \cup F'$ is also an element of $\mathcal{F}$, and
	\item\label{itm:contains_singletons} $\{s\} \in \mathcal{F}$ for every $s \in S$ and $\emptyset \in \mathcal{F}$,
\end{enumerate}
we call a subset $C \subseteq S$ \emph{connected} if for every two elements $x, y \in C$ there is $F \in \mathcal{F}$ with $F \subseteq C$ and $x, y \in F$.
The tuple $(S, \cC)$ is the \emph{connectoid induced by $\mathcal{F}$}, where~$\cC$ is the set of connected subsets of $S$.
\begin{prop}\label{prop:finite_connected_set}
	Let $(S, \cC)$ be a connectoid induced by $\mathcal{F}$.
	Then $\mathcal{F}$ is the set of finite connected sets.
\end{prop}
\begin{proof}
	Every element of $\mathcal{F}$ is connected by definition, and finite.
	Conversely, given a finite connected set $C$, fix $c \in C$ and let $F_{c'} \in \mathcal{F}$ with $F_{c'} \subseteq C$ and $c,c' \in F$ for $ c' \in C$.
	Then $C = \bigcup_{c' \in C} F_{c'}$ is an element of $\mathcal{F}$ by~\labelcref{itm:bonding_closed}.
\end{proof}
\noindent
Note that if $X, Y \in \cC$ intersect, then $X \cup Y \in \cC$.

We call a connectoid $(S, \cC)$ \emph{connected} if $S$ is a connected set.
Given a subset $S' \subseteq S$ we call a maximal connected subset of $S'$ a \emph{component} of $S'$ and let $\cK(S')$ be the set of components of $S'$.
Given some tree $T$ with $V(T) \subseteq S$ and some $t \in T$, we define $K_t^T$ to be the unique component in $\cK(S \setminus\ODown{t}_T)$ containing $t$.

We present three different types of substructures that inherit the structure of a connectoid $(S, \cC)$.
Firstly, we call a connectoid $(S', \cC')$ a \emph{subconnectoid} of $(S, \cC)$ if $S' \subseteq S$ and $\cC' \subseteq \cC$.
\begin{prop}
	Let $(S, \cC)$ be a connectoid and let $S' \subseteq S$.
	Then $(S', \cC \cap \cP(S'))$ is a connectoid.
\end{prop}
\noindent
We call $(S', \cC \cap \cP(S'))$ \emph{the induced subconnectoid} of $(S, \cC)$ on $S'$, or simply an \emph{induced subconnectoid}.
\begin{proof}
	The set $\mathcal{F}'$ of finite elements in $\cC \cap \cP(S')$ satisfies~\labelcref{itm:bonding_closed,itm:contains_singletons}.
	Let $(S',\mathcal{C}')$ be the connectoid induced by $\mathcal{F}'$.
	We show that $\cC \cap \cP(S') = \mathcal{C}'$.
	
	For every $C \in \cC \cap \cP(S')$ and every $x, y \in C$ there is a finite connected set $F$ with $F \subseteq C$ and $x,y \in F$, by~\cref{prop:finite_connected_set}.
	Then $F \in \mathcal{F}'$ by the choice of $\mathcal{F}'$.
	This implies that $C \in \mathcal{C}'$.
	Conversely, let $C' \in \mathcal{C}'$ be arbitrary.
	Then for every $x', y' \in C'$ there is $F'\in \mathcal{F}'$ with $F' \subseteq C'$ and $x',y' \in F'$.
	By the choice of $\mathcal{F}'$ and by~\cref{prop:finite_connected_set}, $C' \in \cC$, and, in particular, $C' \in \cC \cap \cP(S')$.
\end{proof}

Secondly, we consider minors.
\begin{prop}
	Let $(S, \cC)$ be a connectoid and let $\mathcal{P}$ be a partition of $S$ into elements of $\cC$.
	Then $(\mathcal{P}, \{X \subseteq \mathcal{P}: \bigcup X \in \cC \})$ is a connectoid.
\end{prop}
\noindent
Given a connectoid $(S, \cC)$ and a partition $\mathcal{P}$ of $S$ into elements of $\cC$, we say $(\mathcal{P}, \{X \subseteq \mathcal{P}: \bigcup X \in \cC \})$ is obtained from $(S, \cC)$ \emph{by contracting} $\mathcal{P}$.
\begin{proof}
	The set $\mathcal{F}'$ of finite elements in $\{X \subseteq \mathcal{P}: \bigcup X \in \cC \}$ satisfies~\labelcref{itm:bonding_closed,itm:contains_singletons}.
	Let $(\mathcal{P},\mathcal{C}')$ be the connectoid induced by $\mathcal{F}'$.
	We show that $\{X \subseteq \mathcal{P}: \bigcup X \in \cC \} = \mathcal{C}'$.
	
	Let $X \subseteq \mathcal{P}$ with $\bigcup X \in \cC$ be arbitrary.
	Furthermore, let $P_1, P_2 \in X$ be arbitrary.
	We pick some $x \in P_1$ and $y \in P_2$.
	Since $\bigcup X \in \cC$, there is a finite set $F \in \cC$ with $F \subseteq \bigcup X$ and $x, y \in F$, by~\cref{prop:finite_connected_set}.
	Let $F':= \{P \in \mathcal{P}: P \cap F \neq \emptyset\}$, and note that $F'$ is finite since $F$ is finite.
	Then $F \cup \bigcup F' = \bigcup F' \in \cC$ and, in particular, $F'$ is contained in $\mathcal{F}'$.
	Moreover, $P_1, P_2 \in F'$ since $x,y \in F$, and $F' \subseteq X$ since $F \subseteq \bigcup X$.
	This shows that $X \in \mathcal{C}'$.
	
	Conversely, let $C \in \mathcal{C}'$ be arbitrary.
	We show that $\bigcup C \in \cC$, as desired.
	Let $x,y \in \bigcup C$ be arbitrary, and let $Q_1, Q_2 \in C'$ with $x \in Q_1$ and $y \in Q_2$.
	Since $C \in \mathcal{C}'$, there is a finite set $Y \subseteq \mathcal{P}$ with $\bigcup Y \in \cC$, $Y \subseteq C$ and $Q_1,Q_2 \in Y$.
	Then there is a finite set $F \in \cC$ with $F \subseteq \bigcup Y \subseteq \bigcup C$ and $x,y \in F$, by~\cref{prop:finite_connected_set}.
	This implies $\bigcup C \in \cC$.
\end{proof}
\noindent
A connectoid $(S'', \cC'')$ is a \emph{minor} of $(S, \cC)$ if there exists a subconnectoid $(S', \cC')$ of $(S, \cC)$ such that $(S'', \cC'')$ is obtained from $(S', \cC')$ by contracting $S''$.
Note that subgraphs and minors of an undirected graph $G$ correspond to subconnectoids and minors of the connectoid corresponding to $G$.

Thirdly, for every $\hat S \subseteq S$, the tuple $(\hat S, \{C \cap \hat S: C \in \cC \})$ is the \emph{torso} of $(S, \cC)$ at $\hat S$.
\begin{prop}
	Let $(S, \cC)$ be a connectoid and let $\hat S \subseteq S$.
	Then $(\hat S, \{C \cap \hat S: C \in \cC \})$ is a connectoid.
\end{prop}
\begin{proof}
	The set $\mathcal{F}'$ of finite elements in $ \{C \cap \hat S: C \in \cC \}$ satisfies~\labelcref{itm:bonding_closed,itm:contains_singletons}.
	Let $(\hat S,\mathcal{C}')$ be the connectoid induced by $\mathcal{F}'$.
	We show that $ \{C \cap \hat S: C \in \cC \} = \mathcal{C}'$.
	
	Let $C \in \cC$ be arbitrary.
	For every $x, y \in C \cap \hat S$ there is a finite set $F \in \cC$ with $x,y \in F$ and $F \subseteq C$.
	Then $x,y \in F \cap \hat S$, $F \cap \hat S \subseteq C \cap \hat S$ and $F \cap \hat S \in \mathcal{F}'$.
	Thus $C \cap \hat S \in \cC'$.
	
	Conversely, let $C' \in \mathcal{C}'$ be arbitrary.	
	For every $x', y' \in C'$ there is $F_{x',y'} \in \mathcal{F}'$ with $F_{x',y'} \subseteq C'$ and $x',y' \in F_{x',y'}$.
	Let $C_{x',y'}$ be some element of $\cC$ with $C_{x',y'} \cap \hat S = F_{x',y'}$.
	We show that $Z:= \bigcup_{x',y' \in C'} C_{x',y'}$ is in $\cC$, which implies that $C'= Z \cap \hat S$ is in $\{C \cap \hat S: C \in \cC \}$, as desired.
	For $a,b \in Z$ there are $x',y',x'',y'' \in C'$ such that $a \in C_{x',y'}$ and $b \in C_{x'',y''}$.
	Then $C_{x',y'} \cup C_{x',x''} \cup C_{x'',y''} \subseteq Z$ is an element of $\cC$, which implies that there is a finite element $F_{a,b} \in \cC$ with $F_{a,b} \subseteq Z$ and $a,b \in F_{a,b}$.
	Thus $Z$ is indeed in $\cC$.
\end{proof}

\subsection{Necklaces}
A connected set~$N$ is called a \emph{necklace} if there exists a family $(H_n)_{n \in \NN}$ of finite connected sets such that $N = \bigcup_{n \in \NN} H_n$ and $H_i \cap H_j \neq \emptyset$ holds if and only if $|i - j| \leq 1$ for every $i, j \in \NN$.

Given a finite set $X \subseteq S$ and a necklace $N$, we call the component of $\cK(N \setminus X)$ that contains \emph{almost all}, i.e.\ all but finitely many, elements of $N$ the \emph{$X$-tail} of $N$:
\begin{prop}[\cite{connectoids1}*{Corollary~2.3}]\label{prop:tail}
	Let $N \subseteq S$ be a countably infinite connected set in some connectoid $(S, \cC)$.
	Then $N$ is a necklace if and only if there is a component in $\cK(N \setminus X)$ that contains almost all elements of $N$ for every finite set $X \subseteq N$.\\
	Furthermore, the $X$-tail of a necklace $N$ is a necklace for every finite set $X \subseteq S$.
\end{prop}
\noindent
Two necklaces $N, N'$ are \emph{equivalent} if for every finite set $X \subseteq S$ the $X$-tails of $N$ and $N'$ are contained in the same component in $\cK(S \setminus X)$.
An equivalence class of necklaces under this relation is an \emph{end} of $(S, \cC)$ and we set \emph{$\Omega(S, \cC)$} to be the set of ends of~$(S, \cC)$.

For every finite set $X \subseteq S$ let $K(X, \omega)$ be the unique component in $\cK(S \setminus X)$ containing a necklace of $\omega$.
We say an infinite set $Y \subseteq S$ \emph{converges to an end $\omega$} if $K(X, \omega)$ contains almost all elements of $Y$ for every finite set $X \subseteq S$.
Note that an infinite set converges to at most one end.

\begin{prop}[\cite{connectoids1}*{Proposition~2.2}]\label{prop:converging}
		Let $(S, \cC)$ be a connectoid and $X \subseteq S$ a countably infinite set. Then the following properties are equivalent:
	\begin{enumerate}[label=(\roman*)]
		\item\label{itm:converging_1} there exists an end $\omega \in \Omega(S, \cC)$ such that $X$ converges to $\omega$,
		\item\label{itm:converging_2} for every $Y \in S^{< \infty}$ there is a component in $\cK(S \setminus Y)$ that contains almost all elements of $X$, and
		\item\label{itm:converging_3} there exists a necklace that contains almost all elements of $X$.
	\end{enumerate}
	Furthermore, if $(S, \cC)$ is connected, in \labelcref{itm:converging_3} `almost all' can be replaced by `all'.
\end{prop}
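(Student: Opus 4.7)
The plan is to establish the cyclic chain \ref{itm:converging_1} $\Rightarrow$ \ref{itm:converging_2} $\Rightarrow$ \ref{itm:converging_3} $\Rightarrow$ \ref{itm:converging_1}. The implication \ref{itm:converging_1} $\Rightarrow$ \ref{itm:converging_2} is immediate from the definition of convergence: if $X$ converges to some end $\omega$, then for every finite $Y \subseteq S$ the component $K(Y, \omega)$ witnesses \ref{itm:converging_2}.

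For \ref{itm:converging_3} $\Rightarrow$ \ref{itm:converging_1}, let $N = \bigcup_{n \in \NN} H_n$ be a necklace containing almost all of $X$, and let $\omega$ be the end it represents. Given any finite $Y \subseteq S$, the disjointness pattern $H_i \cap H_j = \emptyset$ for $|i - j| \geq 2$ forces $Y$ to meet only finitely many of the $H_n$, so the $Y$-tail of $N$ still contains all but finitely many elements of $N$. This tail is itself a necklace of $\omega$ and hence lies in $K(Y, \omega)$, so $K(Y, \omega)$ contains cofinitely many elements of $X$, which is exactly convergence of $X$ to $\omega$.

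The heart of the argument is \ref{itm:converging_2} $\Rightarrow$ \ref{itm:converging_3}. Enumerate $X = \{x_1, x_2, \ldots\}$, write $Y_m := H_1 \cup \cdots \cup H_m$, and let $K_m$ be the unique component in $\cK(S \setminus Y_m)$ containing almost all of $X$ furnished by \ref{itm:converging_2}; the inclusion $K_{m+1} \subseteq K_m$ holds since $K_{m+1} \cap K_m$ is infinite. I would build $H_1, H_2, \ldots$ inductively as a necklace, maintaining the invariant $H_m \cap K_{m-1} \neq \emptyset$. At step $m + 1$, I pick $h \in H_m \cap K_{m-1}$ by the invariant and assemble a finite set $T_{m+1} \subseteq X \cap K_m$ of as-yet-unplaced elements, including $x_{m+1}$ whenever possible together with every unplaced $X$-element that would otherwise be orphaned in a minor component after the next step. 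The connectivity of $K_{m-1}$ combined with axiom \ref{itm:bonding_closed} on $\cF$ produces a finite connected set $F \subseteq K_{m-1}$ containing $\{h\} \cup T_{m+1}$; set $H_{m+1} := F$. Then $F \cap Y_{m-1} = \emptyset$, $F \cap H_m \ni h$, and $F \cap K_m \supseteq T_{m+1}$ preserve both the necklace conditions and the invariant.

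The main obstacle is guaranteeing that $N := \bigcup_m H_m$ contains \emph{almost all} of $X$, not merely infinitely many elements: a naive scheduling could strand infinitely many $x_i$ in minor components of some $S \setminus Y_m$ before they are placed. The careful inclusion above addresses this because the ``at-risk'' set $X \cap (K_m \setminus H_{m+1}) \setminus K_{m+1}$ is contained in the finite set $X \setminus K_{m+1}$ provided by \ref{itm:converging_2}, so committing to include every at-risk element in $H_{m+1}$ only enlarges it by a finite amount. A pigeonhole-style bookkeeping argument, tracking at which stage each $x_i$ leaves $K_m$ for the last time, then shows that $X \setminus N$ is finite, which completes the construction.
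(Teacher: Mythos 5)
Your cyclic architecture is sound, and the implications (i)$\Rightarrow$(ii) and (iii)$\Rightarrow$(i) are correct as written (for the latter, a finite $Y$ meets only finitely many $H_n$, so the $Y$-tail of $N$ omits only finitely many elements of $N$, hence of $X$). The genuine gap is in (ii)$\Rightarrow$(iii), exactly at the point you flag as the main obstacle: your rescue mechanism is circular. You ask $H_{m+1}$ to contain every element of $X \cap (K_m \setminus H_{m+1}) \setminus K_{m+1}$, but $K_{m+1}$ is by definition the large component of $\cK(S \setminus (Y_m \cup H_{m+1}))$, so the ``at-risk'' set is only determined once $H_{m+1}$ has been fixed. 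Enlarging $H_{m+1}$ to absorb the at-risk elements changes $K_{m+1}$ and may orphan a fresh finite batch of elements of $X \cap K_m$; the natural iteration $H^{(0)} \subseteq H^{(1)} \subseteq \cdots$ adds a finite set at every round but need not terminate, and $H_{m+1}$ must be finite. The concluding ``pigeonhole-style bookkeeping'' does not close this: each of the $\omega$ stages may strand a new finite set, and nothing you have arranged bounds the total.

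The repair is to rescue elements one stage \emph{after} they are orphaned instead of predicting who will be orphaned, and your own setup already makes this possible because $H_{m+1}$ is built inside $K_{m-1}$ rather than $K_m$. Concretely, at stage $m+1$ let $T_{m+1}$ consist of the least-indexed not-yet-placed element of $X \cap K_m$ (this keeps your invariant $H_{m+1} \cap K_m \neq \emptyset$ and guarantees that every element of $X \cap \bigcap_m K_m$ is eventually placed) together with the set $B_m := (X \cap K_{m-1}) \setminus (K_m \cup Y_m)$ of elements that were dropped in the passage from $K_{m-1}$ to $K_m$. The set $B_m$ is finite since $B_m \subseteq X \setminus K_m$, and its elements still lie in $K_{m-1}$, so a finite connected $H_{m+1} \subseteq K_{m-1}$ through $h$ can collect them; note that your constraint $T_{m+1} \subseteq X \cap K_m$ forbids exactly these elements, since they are precisely the ones no longer in $K_m$. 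With this change, every $x \in X$ either lies in all $K_m$ (and is placed by the scheduling), or has a least $m$ with $x \notin K_m$; if $m \geq 1$ then $x \in K_{m-1}\setminus K_m$, so $x$ is either already in $Y_m$ or belongs to $B_m \subseteq H_{m+1}$. Hence $X \setminus N \subseteq X \setminus K_0$ is finite, as required. (All of this presumes $X$ infinite, which is also what makes the component $K_m$ in (ii) unique; for finite $X$ the statement is degenerate.)
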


\subsection{Weak normal trees}
A \emph{weak normal tree} $T$ of a connectoid $(S, \cC)$ is a rooted, undirected tree $T$ with $V(T) \subseteq S$ such that
\begin{itemize}
	\item for every connected set $C \in \cC$ and every two $\leq_T$-incomparable elements $u, v \in C \cap V(T)$ there is $w \in C \cap \Down{u}_T \cap \Down{v}_T$, and
	\item for every two $\leq_T$-comparable elements $u \leq_T v$ there is a connected set $C \in \cC$ containing $u$ and $v$ that avoids $\ODown{u}_T$~\cite{connectoids1}.
\end{itemize}

\begin{prop}[\cite{connectoids1}*{Proposition~2.4}]\label{prop:equivalence_weak_normal_tree}
	Let $(S, \cC)$ be a connectoid and $T$ a rooted, undirected tree with $V(T) \subseteq S$. Then the following properties are equivalent:
	\begin{enumerate}[label=(\alph*)]
		\item\label{itm:equivalence_weak_normal_tree_1} $T$ is a weak normal tree, and
		\item\label{itm:equivalence_weak_normal_tree_2} $K_t^T \cap V(T) = \Up{t}_T$ holds for every vertex $t \in T$.
	\end{enumerate}
	Furthermore, $K_t^T = \Up{t}_T$ holds for every vertex $t$ of a weak normal spanning tree $T$.
\end{prop}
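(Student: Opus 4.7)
The plan is to prove both implications directly and then derive the ``Furthermore'' as a one-line consequence. Throughout, I will use the immediate observation that $\ODown{t}_T = \Down{p}_T$ whenever $p$ is the parent of $t$ in $T$, which lets us identify $S \setminus \ODown{t}_T$ with $S \setminus \Down{p}_T$.

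For the direction \ref{itm:equivalence_weak_normal_tree_1} $\Rightarrow$ \ref{itm:equivalence_weak_normal_tree_2}, I would first show $\Up{t}_T \subseteq K_t^T \cap V(T)$: for any $v$ with $t \leq_T v$, the second property of a weak normal tree provides a connected $C \in \cC$ containing $t,v$ and avoiding $\ODown{t}_T$, so $C$ lies in the component $K_t^T$. For the reverse inclusion $K_t^T \cap V(T) \subseteq \Up{t}_T$, I take $v \in K_t^T \cap V(T)$ and rule out the only two alternatives to $t \leq_T v$: the case $v <_T t$ is immediate since $v \in \ODown{t}_T$ contradicts $v \in K_t^T$; and if $v$ is $\leq_T$-incomparable to $t$, then since $t,v$ lie in the connected set $K_t^T$ one finds a connected $C \subseteq K_t^T$ with $t,v \in C$, and the first property of a weak normal tree yields $w \in C$ with $w \leq_T t, v$, where $w = t$ is ruled out by incomparability, so $w <_T t$, contradicting $C \subseteq S \setminus \ODown{t}_T$.

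For \ref{itm:equivalence_weak_normal_tree_2} $\Rightarrow$ \ref{itm:equivalence_weak_normal_tree_1} the comparable property is read off immediately: if $u \leq_T v$, then $v \in \Up{u}_T = K_u^T \cap V(T)$, so $u$ and $v$ lie in the same component of $S \setminus \ODown{u}_T$, which yields the desired connected witness. The main obstacle is the incomparable property. Given $C \in \cC$ and incomparable $u,v \in C \cap V(T)$, let $p = u \wedge v$ be their meet in $T$ and let $p'$ and $p''$ be the children of $p$ on the paths to $u$ and $v$ respectively; since $p$ is the deepest common ancestor, $p' \neq p''$. I would argue by contradiction: if $C \cap \Down{p}_T = \varnothing$, then because $\ODown{p'}_T = \ODown{p''}_T = \Down{p}_T$, the set $C$ avoids both $\ODown{p'}_T$ and $\ODown{p''}_T$; since $u \in \Up{p'}_T = K_{p'}^T \cap V(T)$ gives $u \in K_{p'}^T$, connectedness of $C$ forces $C \subseteq K_{p'}^T$, and symmetrically $C \subseteq K_{p''}^T$. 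Then $u \in K_{p''}^T \cap V(T) = \Up{p''}_T$ means $p'' \leq_T u$, forcing $p''$ to lie on the path from $p$ to $u$, which is incompatible with $p' \neq p''$. Hence $C \cap \Down{p}_T \neq \varnothing$, and any $w$ in this intersection satisfies $w \leq_T p \leq_T u,v$.

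Finally, for the ``Furthermore'', a weak normal spanning tree has $V(T) = S$, so $K_t^T = K_t^T \cap V(T) = \Up{t}_T$ by the equivalence just proved. I expect the meet argument in the incomparable case to be the only nontrivial step; once one recognises that $\ODown{p'}_T$ coincides with $\Down{p}_T$, the two applications of \ref{itm:equivalence_weak_normal_tree_2} to the two children of $p$ pin $C$ down and produce the contradiction cleanly.
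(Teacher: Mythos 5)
Your argument is correct and complete: the forward direction uses exactly the two defining properties of a weak normal tree to pin $K_t^T\cap V(T)$ down to $\Up{t}_T$, and the meet argument with the two children $p'\neq p''$ of $p=u\wedge v$ correctly derives the contradiction $p''\leq_T u$ from two applications of \labelcref{itm:equivalence_weak_normal_tree_2}. Note that this paper does not reprove the statement but imports it from \cite{connectoids1}*{Proposition~2.4}; your proof is the natural self-contained argument (implicitly using that a connected set meeting a component of $S\setminus\ODown{t}_T$ is contained in it, which holds since unions of intersecting connected sets are connected), and nothing is missing.
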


\begin{prop}\label{lem:nt_properties}
	Let $T$ be a weak normal tree of a connectoid $(S, \cC)$. Then the following properties are true:
	\begin{enumerate}[label=(\roman*)]
		\item\label{itm:nt_property_1}	every rooted subtree of $T$ is a weak normal tree, and
		\item\label{itm:nt_property_2} every connected set $C$ intersecting $V(T)$ has a unique $\leq_T$-least element.
	\end{enumerate}
	Furthermore, if $T$ is a weak normal spanning tree, then the components in $\cK(S \setminus V(T'))$ for every rooted subtree $T' \subseteq T$ are the sets $\Up{t}_T$ for $t$ being $\leq_T$-minimal in $T - T'$.
\end{prop}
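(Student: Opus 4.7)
The approach is to verify the two weak-normality axioms for rooted subtrees, to show a downward-directedness property for $C \cap V(T)$ via the first axiom, and to combine these with \cref{prop:equivalence_weak_normal_tree} to describe the components of the complement of a rooted subtree.

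I would begin with the preliminary observation that any rooted subtree $T' \subseteq T$ is $\leq_T$-down-closed: the $T'$-path from the root to any vertex of $T'$ coincides with the $T$-path, so every $\leq_T$-ancestor of a vertex in $V(T')$ lies in $V(T')$; consequently $\leq_{T'}$ is the restriction of $\leq_T$ to $V(T')$. For~\ref{itm:nt_property_1}, both weak-normality axioms for $T'$ then descend from those for $T$: if $u, v \in C \cap V(T')$ are $\leq_{T'}$-incomparable, a witness $w \in C$ with $w \leq_T u, v$ provided by $T$ automatically lies in $V(T')$ by down-closedness; and if $u \leq_{T'} v$, any $C \in \cC$ witnessing this comparability in $T$ avoids all $w <_T u$, hence all $w <_{T'} u$.

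For \ref{itm:nt_property_2}, set $U := C \cap V(T)$ and note that any two $\leq_T$-incomparable $u, v \in U$ admit a common $\leq_T$-lower bound $w \in C$ by the first axiom; since $w \leq_T u$ and $u \in V(T)$ force $w \in V(T)$, in fact $w \in U$, so $U$ is downward directed. Fixing any $u_0 \in U$, the chain $\Down{u_0}_T$ is well-ordered, so $\Down{u_0}_T \cap U$ admits a $\leq_T$-least element $u^*$. A short case split (comparable versus incomparable with $u^*$), together with downward directedness and the minimality of $u^*$, will show that $u^*$ is the $\leq_T$-least element of all of $U$.

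For the \emph{Furthermore}, assume $T$ is a weak normal spanning tree and $T' \subseteq T$ a rooted subtree. By the above $V(T) = S$ and $V(T')$ is $\leq_T$-down-closed, so the $\leq_T$-minimal elements of $T - T'$ are precisely those $t$ with $\ODown{t}_T \subseteq V(T')$. For such $t$, \cref{prop:equivalence_weak_normal_tree} gives $\Up{t}_T = K_t^T$, the unique component of $S \setminus \ODown{t}_T$ containing $t$; down-closedness of $V(T')$ yields $\Up{t}_T \subseteq S \setminus V(T')$, and since $\ODown{t}_T \subseteq V(T')$, every connected subset of $S \setminus V(T')$ meeting $\Up{t}_T$ is contained in $K_t^T$, so $\Up{t}_T$ is a component of $\cK(S \setminus V(T'))$. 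Conversely, any $K \in \cK(S \setminus V(T'))$ meets $V(T) = S$ in some $x$; the $\leq_T$-least element $t$ of the nonempty, well-ordered set $\Down{x}_T \setminus V(T')$ is then $\leq_T$-minimal in $T - T'$, and $x \in \Up{t}_T$ forces $K = \Up{t}_T$. The only subtle point I foresee is the observation that rooted subtrees are automatically $\leq_T$-down-closed, which is what makes $\leq_{T'}$ agree with the restriction of $\leq_T$ and lets the weak-normality conditions transfer without extra work.
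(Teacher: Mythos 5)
Your proof is correct and follows essentially the same route as the paper's: part~(i) by observing that the two weak-normality axioms are inherited by rooted (hence $\leq_T$-down-closed) subtrees, and the \emph{Furthermore} via $K_t^T = \Up{t}_T$ from \cref{prop:equivalence_weak_normal_tree}. The only divergence is in~(ii): the paper picks $s \in C \cap V(T)$ with $\ODown{s}_T \cap C = \emptyset$ and applies \cref{prop:equivalence_weak_normal_tree} to get $C \cap V(T) \subseteq \Up{s}_T$, whereas you derive downward directedness of $C \cap V(T)$ directly from the first axiom and extract the least element of a well-ordered down-closure; both arguments are equally short and valid, and yours is marginally more self-contained for that part (and your \emph{Furthermore} spells out the converse inclusion, which the paper leaves implicit).
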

\begin{proof}
	The conditions for a weak normal tree are closed under taking rooted subtrees, which proves \labelcref{itm:nt_property_1}.
	
	Let $C$ be some connected set intersecting $V(T)$.
	Let $s \in C \cap V(T)$ such that $\ODown{s}_T \cap C = \emptyset$.
	By \cref{prop:equivalence_weak_normal_tree}, all elements of $C \cap V(T)$ are contained in $\Up{s}_T$, which proves \labelcref{itm:nt_property_2}.
	
	Let $T$ be spanning and $T'$ a rooted subtree of $T$.
	Note that $K_t^T = \Up{t}_T$ holds for every $t \in T$ by \cref{prop:equivalence_weak_normal_tree}.
	Thus for every $\leq_T$-minimal element $t$ in $T - T'$ the set $\Up{t}_T$ is a component in $\cK(S \setminus V(T'))$.
	This finishes the proof.
\end{proof}

Let $T$ be a weak normal tree of a connectoid $(S, \cC)$ and let $K$ be a component in $\cK(S \setminus V(T))$.
We call the set $N_K:= \{t \in T: K_t^T \supseteq K \}$ the \emph{neighbourhood} of $K$ in $T$.

\begin{prop}[\cite{connectoids1}*{Proposition~6.1}]\label{prop:neighbourhood}
	Let $(S, \cC)$ be a connectoid, let $T$ be a weak normal tree of $(S, \cC)$ and let $K$ be a component in $\cK(S \setminus V(T))$.
	Then $K$ is a component in $\cK(S \setminus N_K)$ and $N_K$ is a branch.
\end{prop}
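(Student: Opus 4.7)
The plan is to verify the three conclusions separately, leaning on \cref{prop:equivalence_weak_normal_tree} (so $K_t^T \cap V(T) = \Up{t}_T$ for every $t \in T$) and on \cref{lem:nt_properties}\labelcref{itm:nt_property_2}, which guarantees a $\leq_T$-least element of $V(T)$ inside any connected set meeting $V(T)$.

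For down-closedness, take $t \in N_K$ and $s <_T t$. Since $\ODown{s}_T \subseteq \ODown{t}_T$, the set $K_t^T$ is a connected subset of $S \setminus \ODown{s}_T$ containing $t$; by \cref{prop:equivalence_weak_normal_tree} we also have $t \in \Up{s}_T \subseteq K_s^T$, so the component $K_s^T$ of $S \setminus \ODown{s}_T$ absorbs $K_t^T$, yielding $K \subseteq K_t^T \subseteq K_s^T$ and hence $s \in N_K$. For the chain property, suppose $s, t \in N_K$ are incomparable and pick $k \in K$. Witnesses for $k \in K_s^T$ and $k \in K_t^T$ provide finite connected sets with $s, k \in C_s \subseteq S \setminus \ODown{s}_T$ and $t, k \in C_t \subseteq S \setminus \ODown{t}_T$. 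The union $C_s \cup C_t$ is connected (sharing $k$) and contains the incomparable pair $s, t \in V(T)$, so the weak-normal-tree axiom supplies $w \in C_s \cup C_t$ with $w \leq_T s, t$; incomparability forces $w <_T s$ and $w <_T t$, placing $w$ in $\ODown{s}_T \cap \ODown{t}_T$, which is incompatible with $w \in C_s \cup C_t$.

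For $K$ being a component of $S \setminus N_K$: clearly $K$ is connected and disjoint from $N_K \subseteq V(T)$, so $K$ lies in some component $K^*$ of $S \setminus N_K$. Assume for contradiction $x \in K^* \setminus K$. Connectedness of $K^*$ yields a finite connected $F \subseteq S \setminus N_K$ with $x, k \in F$ for some $k \in K$. If $F \cap V(T) = \emptyset$, then $F$ is a connected subset of $S \setminus V(T)$ meeting $K$, so $F \subseteq K$, contradicting $x \notin K$. Otherwise \cref{lem:nt_properties}\labelcref{itm:nt_property_2} provides a $\leq_T$-least $t_0 \in F \cap V(T)$; its minimality forces $F \subseteq S \setminus \ODown{t_0}_T$, so $F \subseteq K_{t_0}^T$. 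As $K$ is disjoint from $V(T) \supseteq \ODown{t_0}_T$, the set $K \cup K_{t_0}^T$ is connected in $S \setminus \ODown{t_0}_T$, giving $K \subseteq K_{t_0}^T$ and therefore $t_0 \in N_K$ — contradicting $F \subseteq S \setminus N_K$.

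The two order-theoretic statements fall out essentially from the definition of $N_K$ together with the weak-normal-tree axiom, so the delicate step is the third: one has to locate the correct vertex $t_0$ of $T$ inside a witnessing finite connected set $F$, and then exploit $K \cap V(T) = \emptyset$ to enlarge $K_{t_0}^T$ by $K$ without leaving $S \setminus \ODown{t_0}_T$. This is where \cref{lem:nt_properties}\labelcref{itm:nt_property_2} does the real work, and any argument will essentially need a minimality-of-tree-vertex step of this form.
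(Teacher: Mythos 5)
This proposition is imported from the companion paper (\cite{connectoids1}*{Proposition~6.1}) and carries no proof in the present paper, so there is nothing to compare your argument against line by line. Your proof is correct and complete as it stands: the down-closure and chain arguments follow directly from $K_t^T \cap V(T) = \Up{t}_T$ and the weak-normal-tree axiom, and the third part correctly combines the $\leq_T$-least element $t_0$ of $F \cap V(T)$ from \cref{lem:nt_properties} with the observation that $K \cup K_{t_0}^T$ is a connected subset of $S \setminus \ODown{t_0}_T$, forcing $t_0 \in N_K$ and hence the desired contradiction. These are exactly the tools the paper makes available for this purpose, so the approach is the natural one.
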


Let $(S, \cC)$ be a connectoid and let $\hat S \subseteq S$ be some subset.
A component $K \in \cK(S \setminus \hat S)$ has \emph{finite adhesion} to $\hat S$ if there exists a finite set $X \subseteq \hat S$ such that $K$ is a component in $\cK(S \setminus X)$.
Otherwise, the component $K$ has \emph{infinite adhesion} to $\hat S$.

\begin{prop}[\cite{connectoids1}*{Proposition~6.2}]\label{prop:finite_neighbourhood}
	Let $(S, \cC)$ be a connected connectoid and $T$ a weak normal tree of $(S, \cC)$.
	Then a component $K \in \cK(S \setminus V(T))$ has finite adhesion to $V(T)$ if and only if $N_K$ is finite.
\end{prop}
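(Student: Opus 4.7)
The plan is to prove the two directions separately, with the backward direction being an almost immediate consequence of \cref{prop:neighbourhood} and the forward direction requiring a short argument via contradiction.

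For the backward direction, assume $N_K$ is finite. By \cref{prop:neighbourhood}, $K$ is a component of $\cK(S \setminus N_K)$, and since $N_K \subseteq V(T)$ is a finite subset of $V(T)$, this directly witnesses that $K$ has finite adhesion to $V(T)$.

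For the forward direction, I argue by contrapositive: assume $N_K$ is infinite and show that no finite $X \subseteq V(T)$ separates $K$ as a component of $\cK(S \setminus X)$. Suppose for contradiction such an $X$ exists. The key observation is that although $X$ can reach high up into $T$ via elements not in $N_K$, the down-closure $X' := \bigcup_{x \in X} \Down{x}_T$ is still finite, since each $\Down{x}_T$ is a finite path from the root of $T$ to $x$. Because $N_K$ is infinite while $X'$ is finite, we can pick $t^* \in N_K \setminus X'$. Then $\OUp{t^*}_T \cap X = \emptyset$: indeed, if $x \in X$ satisfied $t^* \leq_T x$, then $t^* \in \Down{x}_T \subseteq X'$, contradicting the choice of $t^*$. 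Since $t^* \notin X$ either, in fact $\Up{t^*}_T \cap X = \emptyset$.

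To reach the contradiction, pick any $x \in K$. Since $t^* \in N_K$, we have $K \subseteq K_{t^*}^T$, so $x$ and $t^*$ lie in the same component of $\cK(S \setminus \ODown{t^*}_T)$; thus there is a finite connected set $F \subseteq K_{t^*}^T$ containing $x$ and $t^*$. By \cref{prop:equivalence_weak_normal_tree}, $F \cap V(T) \subseteq K_{t^*}^T \cap V(T) = \Up{t^*}_T$, and since $\Up{t^*}_T \cap X = \emptyset$, we conclude $F \cap X = \emptyset$, i.e.\ $F \subseteq S \setminus X$. But then $F$ is a connected subset of $S \setminus X$ meeting $K$ (at $x$) and containing $t^* \in V(T)$, while $K \cap V(T) = \emptyset$. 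This contradicts $K$ being a component of $\cK(S \setminus X)$, completing the proof.

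The only mild subtlety is the choice of $t^*$: one has to avoid not just $X \cap N_K$ but also all $T$-ancestors of elements of $X$, which is why passing to the down-closure $X'$ is the right move. Once $t^*$ is correctly chosen, the identity $K_{t^*}^T \cap V(T) = \Up{t^*}_T$ from \cref{prop:equivalence_weak_normal_tree} does all the remaining work.
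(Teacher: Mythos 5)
Your proof is correct: the backward direction is indeed immediate from \cref{prop:neighbourhood}, and the forward direction via the down-closure $X'$ of $X$ and the identity $K_{t^*}^T \cap V(T) = \Up{t^*}_T$ from \cref{prop:equivalence_weak_normal_tree} is exactly the natural argument (the paper itself only cites this result from the first paper of the series without reproving it). The only step left implicit is that $K \cup F$ is connected because $K$ and $F$ meet, which is what actually contradicts the maximality of $K$ in $\cK(S \setminus X)$; this follows from the bonding axiom for finite connected sets and is worth a half-sentence.
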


\begin{prop}\label{prop:infinite_adhesion_witness}
	Let $(S, \cC)$ be a connectoid and let $\hat S \subseteq S$.
	Then a component $K \in \cK(S \setminus \hat S)$ has infinite adhesion to $\hat S$ if and only if for every $b \in K$ there is a family $(H_n)_{n \in \NN}$ of finite connected sets such that $b \in H_n$, $H_n \cap \hat S \neq \emptyset$ and $H_n \cap H_m \cap \hat S = \emptyset$ for every $n \neq m \in \NN$.
\end{prop}
\begin{proof}
	Let $K \in \cK(S \setminus \hat S)$ be a component that has infinite adhesion to $\hat S$ and let $b \in K$ be arbitrary.
	We construct the desired family $(H_n)_{n \in \NN}$ as follows:
	Let $H_1$ be a finite connected set that contains $b$ and intersects $\hat S$.
	If $(H_m)_{m \leq n}$ has been constructed for some $n \in \NN$, then $K$ is not a component in $\cK(S \setminus (\hat S \cap \bigcup_{m \leq n} H_m))$.
	Thus the component $K' \in \cK(S \setminus (\hat S \cap \bigcup_{m \leq n} H_m))$ that contains $b$ intersects $\hat S$ and we pick a finite connected set $H_{n+1} \subseteq K'$ that contains $b$ and intersects $\hat S$.
	Then $(H_n)_{n \in \NN}$ is as desired.
	
	Conversely, if there exists such a family $(H_n)_{n \in \NN}$, then for every finite set $X \subseteq \hat S$ there is $n \in \NN$ such that $H_n$ avoids $X$.
	Since $K$ avoids $\hat S$ and since $H_n$ intersects both $K$ and $\hat S$, $K$ is not a component in $\cK(S \setminus X)$.
	Thus $K$ has indeed infinite adhesion.
\end{proof}

\subsection{Normal trees}
A weak normal tree $T$ of $(S, \cC)$ is a \emph{normal tree} of $(S, \cC)$ if for every rooted ray $R$ in $T$ there is a necklace that contains almost all elements of $V(R)$.
Note that \cref{prop:converging} characterises this condition.

\begin{prop}[\cite{connectoids1}*{Proposition~6.3}] \label{prop:extension_normal_tree}
	Let $(S, \cC)$ be a connected connectoid and let $T$ be a normal tree of $(S, \cC)$ rooted at $r$.
	Let $T_K$ be a (possibly trivial) normal tree of the induced subconnectoid on $K$ for every component $K \in  \cK(S \setminus V(T))$.
	Suppose that every component in $\cK(S \setminus V(T))$ has finite adhesion to $V(T)$.
	Then there exists a normal tree $T'$ of $(S, \cC)$ rooted at $r$ with $T \subseteq T'$ such that $T' - T$ is the disjoint union of the trees $T_K$ for $K \in \cK(S \setminus V(T))$.
	
	Furthermore, if $T$ and all $T_K$ for $K \in \cK(S \setminus V(T))$ are rayless, then $T'$ is rayless.
\end{prop}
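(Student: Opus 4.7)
The plan is to construct $T'$ explicitly by attaching each non-trivial $T_K$ to $T$ at a canonical vertex of $T$, and then to verify the defining properties via the characterisation $K_t^{T} \cap V(T) = \Up{t}_T$ from \cref{prop:equivalence_weak_normal_tree}. For each $K \in \cK(S \setminus V(T))$, the finite-adhesion hypothesis together with \cref{prop:finite_neighbourhood} shows that $N_K$ is finite; by \cref{prop:neighbourhood} it is totally ordered and $\leq_T$-down-closed; and it is nonempty since the root $r$ lies in it. Hence $N_K$ has a $\leq_T$-maximum $t_K$, and in fact $N_K = \Down{t_K}_T$. Let $r_K$ denote the root of $T_K$ whenever $T_K$ is non-trivial. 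Define $T'$ by adjoining to $T$ each such $T_K$ via a single edge $\{t_K, r_K\}$, and root $T'$ at $r$. Since the vertex sets of the trees $T_K$ lie in pairwise disjoint components of $S \setminus V(T)$, the result is a well-defined tree with $V(T) \cup \bigcup_K V(T_K)$ as its vertex set.

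For the weak-normal property it suffices, by \cref{prop:equivalence_weak_normal_tree}, to check $K_t^{T'} \cap V(T') = \Up{t}_{T'}$ for every $t \in V(T')$. If $t \in V(T)$, then $\ODown{t}_{T'} = \ODown{t}_T$, so $K_t^{T'} = K_t^T$ as subsets of $S$. The crucial observation is that for any component $K$ of $S \setminus V(T)$, one has $K \subseteq K_t^T$ iff $t \in N_K$ iff $t \leq_T t_K$, using that $N_K$ is down-closed with maximum $t_K$. Combining this with $K_t^T \cap V(T) = \Up{t}_T$ and $V(T_K) \subseteq K$ gives $K_t^{T'} \cap V(T') = \Up{t}_T \cup \bigcup_{K : t_K \geq_T t} V(T_K) = \Up{t}_{T'}$. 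If instead $t \in V(T_K)$ for some $K$, then $\ODown{t}_{T'} = N_K \cup \ODown{t}_{T_K}$, and since $K$ is a component of $\cK(S \setminus N_K)$ by \cref{prop:neighbourhood}, every connected subset of $S \setminus \ODown{t}_{T'} \subseteq S \setminus N_K$ that meets $K$ is already contained in $K$. Hence $K_t^{T'}$ coincides with the component of $t$ in $K \setminus \ODown{t}_{T_K}$ computed in the induced subconnectoid on $K$. Weak-normality of $T_K$ then yields $K_t^{T'} \cap V(T_K) = \Up{t}_{T_K}$, while $K_t^{T'} \subseteq K$ is disjoint from $V(T)$ and from every $V(T_{K'})$ with $K' \neq K$; therefore $K_t^{T'} \cap V(T') = \Up{t}_{T_K} = \Up{t}_{T'}$.

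For the necklace condition on rooted rays, any rooted ray $R$ in $T'$ either stays inside $T$ or eventually passes through some edge $\{t_K, r_K\}$ into $V(T_K)$. In the first case, a necklace containing almost all of $V(R)$ is supplied by the normality of $T$; in the second, the tail of $R$ is a rooted ray in $T_K$, and the normality of $T_K$ produces a necklace in the induced subconnectoid on $K$, which is automatically a necklace in $(S,\cC)$ because finite connected subsets of $K$ remain finite connected subsets of $S$. The same case split shows that $T'$ is rayless whenever $T$ and all $T_K$ are. The main obstacle is the separation step in the second case: one must confirm that removing $\ODown{t}_{T'}$ introduces no new connectivity between $K$ and the rest of $S$, which is precisely where the identity $N_K = \Down{t_K}_T \subseteq \ODown{t}_{T'}$ together with \cref{prop:neighbourhood} is decisive.
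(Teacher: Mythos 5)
Your construction and verification are correct, and this is the natural (and evidently intended) argument: the finite-adhesion hypothesis yields via \cref{prop:finite_neighbourhood} and \cref{prop:neighbourhood} that $N_K$ is a finite down-closed chain with maximum $t_K$, you attach $T_K$ there, and you check $K_t^{T'} \cap V(T') = \Up{t}_{T'}$ using \cref{prop:equivalence_weak_normal_tree} together with the facts that $K \subseteq K_t^T$ iff $t \leq_T t_K$ and that $K$ is a component of $\cK(S \setminus N_K)$. Since the paper only cites this proposition from the first part of the series rather than reproving it, there is no in-text proof to compare against, but your argument is complete and matches the approach the surrounding lemmas are set up to support.
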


\begin{cor}[\cite{connectoids1}*{Corollary 6.4}]\label{cor:finite_normal_tree}
	Let $(S, \cC)$ be a connectoid, let $s \in S$ be some element and let $X \subseteq S$ be a finite set.
	Then there exists a finite normal tree of $(S, \cC)$ rooted at $s$ with $V(T) = X \cup \{s\}$.
\end{cor}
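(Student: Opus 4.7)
The plan is to prove \cref{cor:finite_normal_tree} by induction on $|X|$, pasting together pieces via the extension proposition \cref{prop:extension_normal_tree}. Without loss of generality I assume $s \notin X$ (otherwise replace $X$ by $X \setminus \{s\}$, which leaves $X \cup \{s\}$ unchanged). Moreover I restrict to the induced subconnectoid on the component of $(S, \cC)$ containing $s$; any element of $X$ outside this component could not be placed in a tree rooted at $s$, since the root and every other tree-vertex must be $\leq_T$-comparable and hence must lie in a common connected set.

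For the base case $|X| = 0$, the single-vertex tree $T := \{s\}$ is rayless and vacuously satisfies both weak-normal-tree conditions (there are no pairs of incomparable vertices, and the one comparable pair $s \leq_T s$ is witnessed by $\{s\} \in \cC$), so it is a finite normal tree with $V(T) = \{s\}$. For the inductive step with $|X| \geq 1$, let $T_0 := \{s\}$; this is a rayless normal tree of $(S, \cC)$ rooted at $s$. Every component $K \in \cK(S \setminus \{s\})$ trivially has finite adhesion to $V(T_0) = \{s\}$. For each component $K$ with $X \cap K \neq \emptyset$ I pick some $s_K \in X \cap K$ and apply the inductive hypothesis to the induced subconnectoid on $K$ with root $s_K$ and set $(X \cap K) \setminus \{s_K\}$; this set has cardinality at most $|X| - 1 < |X|$, so the hypothesis yields a finite normal tree $T_K$ of the induced subconnectoid on $K$ rooted at $s_K$ with $V(T_K) = X \cap K$. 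For components $K$ disjoint from $X$ I take $T_K$ to be the empty tree, which is permitted by the ``possibly trivial'' clause of \cref{prop:extension_normal_tree}.

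Applying \cref{prop:extension_normal_tree} to $T_0$ and the family $(T_K)_{K \in \cK(S \setminus \{s\})}$ then produces a normal tree $T'$ of $(S, \cC)$ rooted at $s$ with $T_0 \subseteq T'$ and $T' - T_0 = \bigsqcup_K T_K$. Consequently $V(T') = \{s\} \cup \bigcup_K V(T_K) = \{s\} \cup X$, which is finite, so $T'$ is finite and provides the required tree.

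The only delicate point in this plan is making sure the cardinality drops strictly when recursing into each component, which is guaranteed by removing the chosen root $s_K$ from $X \cap K$ before invoking the inductive hypothesis. Beyond this bookkeeping, the argument is a direct and unobstructed application of \cref{prop:extension_normal_tree}.
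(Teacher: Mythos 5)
Your argument is correct. This paper does not actually prove \cref{cor:finite_normal_tree} --- it is quoted verbatim from the first paper of the series --- so there is no in-text proof to compare against, but the induction on $|X|$ via \cref{prop:extension_normal_tree} is the natural route and goes through: the singleton tree on $\{s\}$ is a rayless normal tree whose complementary components trivially have finite adhesion to $\{s\}$, the recursion strictly decreases the cardinality of the prescribed set inside each component, and only finitely many components receive a nontrivial $T_K$, so the resulting tree is finite with vertex set exactly $X \cup \{s\}$. Your preliminary reduction is also the right call: as literally stated the corollary requires $X$ to lie in the component of $s$ (every vertex of a tree rooted at $s$ is $\leq_T$-comparable with $s$ and hence must share a connected set with it), and passing to the induced subconnectoid on that component is the standard implicit convention here --- indeed \cref{prop:extension_normal_tree} is itself stated only for connected connectoids, so this restriction is needed before invoking it in any case.
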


\subsection{Dispersedness and fat~$TK_{\aleph_0}$-dispersedness}
The union of a ray $R$ with infinitely many disjoint paths that have precisely their first vertex on $R$ is called a \emph{comb}~\cite{DiestelBook2016}.
For an undirected graph $G$ we call a subset $U \subseteq V(G)$ \emph{dispersed} if every comb has finite intersection with $U$~\cite{jung1969wurzelbaume}.
For the definition of dispersed sets in connectoids note that in the connectoid $(V(G), \cC)$ corresponding to an undirected graph $G$ every comb of $G$ forms a necklace in $(V(G), \cC)$.
Vice versa, for every necklace $N$ containing infinitely many elements of $U$ there exists a comb in $G[N]$ that contains infinitely many elements of $U$ by the star-comb lemma~\cite{DiestelBook2016}*{Lemma~8.2.2}, since $\bigcup_{n \in \NN} G[H_n]$ is locally finite for every witness $(H_n)_{n \in \NN}$ of $N$ by the definition of necklace.

Thus we define dispersed sets in general connectoids in the following way:
Let $(S, \cC)$ be a connectoid.
A subset $U\subseteq S$ is called \emph{dispersed} if every necklace in $(S, \cC)$ has finite intersection with $U$.

Given a connectoid $(S, \cC)$ and two disjoint finite sets $X, Y$, we call a finite connected set $C_1 \cup C_2$ an \emph{$X$--$Y$~link} if $C_1$ and $C_2$ are connected sets with $X \subseteq C_1 \setminus C_2$, $Y \subseteq C_2 \setminus C_1$ and $C_1 \cap C_2 \neq \emptyset$.
Two $X$--$Y$~links are \emph{internally disjoint} if they intersect only in $X \cup Y$.
A family of disjoint finite connected sets $(B_n)_{n \in \NN}$ together with uncountably many internally disjoint $B_n$--$B_m$~links for every $n \neq m \in \NN$ is called a \emph{fat $TK_{\aleph_0}$}.
We call the sets in $(B_n)_{n \in \NN}$ the \emph{branch sets} and call a set $U \subseteq S$ \emph{fat~$TK_{\aleph_0}$-dispersed} if for every fat~$TK_{\aleph_0}$ at most finitely many branch sets contain an element of $U$.
Note that we can choose the links of a fat $TK_{\aleph_0}$ such that every $B_n$--$B_m$~link intersects a $B_{n'}$--$B_{m'}$~link only in $(B_n \cup B_m) \cap (B_{n'} \cup B_{m'})$ for every $n \neq m$ and $n' \neq m'$.

\section{Characterisation via dispersed sets}\label{sec:jung}
Jung proved \cite{jung1969wurzelbaume}*{Theorem~6} that an undirected graph $G$ has a normal tree containing $U \subseteq V(G)$ if and only if $U$ is a countable union of dispersed sets.
Pitz expanded \cite{pitz2021quickly}*{Theorem~4} Jung's result by showing that dispersedness can be replaced by fat~$TK_{\aleph_0}$-dispersedness, i.e.\ being finitely separable from every fat~$TK_{\aleph_0}$.
In this section we generalise the results of Jung and Pitz to connectoids.

\Jung*

	\begin{proof}
    \begin{description}
 
		\item[\labelcref{itm:normal_tree} implies \labelcref{itm:weak_normal_tree}] Straightforward.
		\item[\labelcref{itm:weak_normal_tree} implies \labelcref{itm:quasi_weak_normal_tree}] Straightforward.  
        \item[\labelcref{itm:quasi_weak_normal_tree} implies \labelcref{itm:countable_union}] Let $T$ be a tree containing $U$ such that for every $C \in \cC$ and every two $\leq_T$-incomparable elements $u, v \in C \cap V(T)$ there is $w \in C$ with $w \leq_T u, v$. We prove that the distance classes $D_n$ of $T$ with respect to its root are dispersed, which implies \labelcref{itm:countable_union}. More precisely, we show that an arbitrary necklace $N$ has finite intersection with $D_n$ for every $n \in \NN$.
        
        If there exists $t \in T$ such that the $\Down{t}_T$-tail of $N$ is disjoint to $V(T)$, then $N$ has finite intersection with $V(T)$ and in particular, finite intersection with $D_n$ for every $n \in \NN$. Therefore we can assume that the $\Down{t}_T$-tail of $N$ contains a vertex of $V(T)$ for every $t \in T$.
        
        We construct a rooted ray $(t_n)_{n \in \NN}$ in $T$ such that the $\ODown{t_n}_T$-tail of $N$ intersects $V(T)$ only in $\Up{t_n}_T$.
        Then the set $D_n$ has finite intersection with $N$ for every $n \in \NN$ as the $\ODown{t_{n + 1}}$-tail of $N$ intersects $V(T)$ only in $\Up{t_{n + 1}}_T$, which is disjoint to $D_n$.
        
        Let $t_1$ be the root of $T$ and suppose that $(t_n)_{n \leq m}$ has been constructed for some $m \in \NN$.
        The $\Down{t_m}_T$-tail contains a vertex of $T$, by assumption.
        Further, the $\Down{t_m}_T$-tail of $N$ intersects $V(T)$ only in $\OUp{t_m}_T$ since the $\Down{t_m}_T$-tail of $N$ is subset of the $\ODown{t_m}_T$-tail of $N$.
        Let $t_{m+1}$ be a child of $t_m$ such that $\Up{t_{m+1}}_T$ contains an element of the $\Down{t_m}_T$-tail of $N$.
        Then the $\Down{t_m}_T$-tail of $N$ intersects $V(T)$ only in $\Up{t_{m+1}}_T$ since otherwise the $\Down{t_m}_T$-tail of $N$ would contradict the property of $T$.
        As $\ODown{t_{m+1}}_T = \Down{t_m}_T$, this finishes the construction of $(t_n)_{n \in \NN}$.
		
		\item[\labelcref{itm:countable_union} implies \labelcref{itm:countable_union_ka}]
		It suffices to show that each dispersed set $W \subseteq S$ is also fat~$TK_{\aleph_0}$-dispersed.
		Suppose for a contradiction that there exists a fat~$TK_{\aleph_0}$ all whose branch sets contain an element of $W$.
		Let $(B_n)_{n \in \NN}$ be the branch sets of this fat~$TK_{\aleph_0}$.
		Further, let $H_n$ be a $B_n$--$B_{n+1}$~link of this fat~$TK_{\aleph_0}$ avoiding $\bigcup_{i \in [n-2]} H_i$.
		Then $(H_n)_{n \in \NN}$ is a witness of a necklace that contains infinitely many elements of $W$, a contradiction.
		
		\item[\labelcref{itm:countable_union_ka} implies \labelcref{itm:normal_tree}]
			Let $(U_i)_{i \in \NN}$ be a family of fat $TK_{\aleph_0}$-dispersed sets with $U = \bigcup_{i \in \NN} U_i$.
			We construct recursively an increasing sequence of rayless normal trees $(T_n)_{n \in \NN}$ of $(S, \cC)$ with common root and show that the weak normal tree $T:= \bigcup_{n \in \NN} T_n$ is a normal tree containing $U$.
			
			Besides the family $(T_n)_{n \in \NN}$ we construct recursively a function $\Theta: V(T) \rightarrow {S}^{< \infty}$ with $\Theta(u) \subseteq \Theta(v)$ for every $u \leq_T v$.
			For the construction of $\Theta$ fix for every two disjoint finite sets $X, Y \subseteq S$ for which there are at most countably many internally disjoint $X$--$Y$~links a maximal set $\mathcal{P}_{\{X,Y\}}$ of internally disjoint $X$--$Y$~links equipped with an enumeration of order type $\omega$.
			
			Set $T_1:=\{r\}$ and $\Theta(r):=\{r\}$.
			We assume that $T_n$ and $\Theta(v)$ for $v \in V(T_n)$ have been defined for some $n \in \NN$.
			We construct $T_{n + 1}$ by extending $T_n$ finitely into every component in $\cK(S \setminus V(T_n))$ that contains an element of $U$ using \cref{prop:extension_normal_tree}:
			Let $K$ be a component in $\cK(S \setminus V(T_n))$ that contains an element of $U$ and let $i \in \NN$ be minimal with $U_i \cap K \neq \emptyset$.
			Further, pick some $u_K \in U_i \cap K$ and let $t_K \in T_n$ be the $\leq_{T_n}$-maximal element of $N_K$, which is unique by \cref{prop:neighbourhood}.
			We pick a finite normal tree $T_K$ of the induced subconnectoid on $K$ rooted at $u_K$ containing the set $\Theta(t_K) \cap K$, which exists by \cref{cor:finite_normal_tree}.
			Let $T_{n +1}$ be the rayless normal tree obtained by applying \cref{prop:extension_normal_tree} to $T_n$ and $T_K$ for every component $K$ in $\cK(S \setminus V(T_n))$ that contains an element of $U$.
			
			For every $K \in \cK(S \setminus V(T_n))$, $u_K$ is the child of $t_K$ in $T_{n+1}$:
			By construction, $\ODown{u_K}_{T_{n+1}} \subseteq V(T_n)$ and thus $K \subseteq K_{u_K}^{T_{n+1}}$.
			By~\cref{prop:equivalence_weak_normal_tree}, $K_{u_K}^{T_{n+1}} \subseteq K_s^{T_{n+1}}$ and thus $s \in N_K$ for every $s \in \ODown{u_K}_{T_{n+1}}$.
			Furthermore, since $t_K \in N_K$, $u_K \in K_{t_K}^{T_{n}}=K_{t_K}^{T_{n+1}}$ and thus $u_K \in \Up{t_K}_{T_{n+1}}$ by~\cref{prop:equivalence_weak_normal_tree}.
			
			We recursively extend $\Theta$ to $V(T_{n + 1}) \setminus V(T_n)$.
			Let $v$ be some vertex of $V(T_{n+1}) \setminus V(T_n)$ such that $\Theta(v)$ has not been defined but for the parent $u$ of $v$ $\Theta(u)$ has been defined.
			Let $I$ be the set of unordered pairs $\{X, Y\}$ of disjoint finite subsets of $\Theta(u)$ for which there are at most countably many $X$--$Y$~links and such that a link of $\mathcal{P}_{\{X,Y\}}$ is not contained in $\Theta(u)$.
			Let $P_{\{X,Y\}}$ be the minimal link of $\mathcal{P}_{\{X,Y\}}$ that is not contained in $\Theta(u)$.
			Firstly, we define the auxiliary set $\Theta(v)':= \Theta(u) \cup \{v\} \cup \bigcup_{\{X,Y\} \in I} P_{\{X,Y\}}$.
			Secondly, we pick a superset $\Theta(v)$ of $\Theta(v)'$ such that $\Theta(v) \cap K_{x}^{T_{n+1}} $ is connected for every $x \in \Down{v}_{T_{n+1}}$:
			We start with $\Theta(v):=\Theta(v)'$ and apply the following recursion for each element of $\Down{v}_{T_{n+1}}$, which we consider increasingly in $\leq_{T_{n+1}}$-order.
			For $x \in \Down{v}_{T_{n+1}}$ add finitely many vertices to $\Theta(v)$ such that $\Theta(v) \cap K_{x}^{T_{n+1}}$ is connected.
			After the recursion $\Theta(v) \cap K_{x}^{T_{n+1}} $ is still connected for every $x \in \Down{v}_{T_{n+1}}$.
			This finishes the construction of $T$ and $\Theta$.
			
			Given an arbitrary rooted ray $R$ in $T$, we prove three properties of $\Theta(V(R)):= \bigcup_{x \in V(R)} \Theta(x)$ that we use repeatedly in the following.
			We remark that $V(R) \subseteq \Theta(V(R))$ and that every element of $\Theta(V(R))$ is contained in almost all sets $\Theta(x)$ for $x \in V(R)$ since $\Theta(u) \subseteq \Theta(v)$ for every $u \leq_T v$.
			Firstly, $\Theta(V(R)) \cap K_{x}^{T} $ is connected for every $x \in V(R)$ by construction of $\Theta$.
			
			Secondly, for every two disjoint finite subsets $X, Y \subseteq \Theta(V(R))$ there exist uncountably many internally disjoint $X$--$Y$~links if there exists an $X$--$Y$~link that intersects $\Theta(V(R))$ only in $X \cup Y$:
			Otherwise $\bigcup \mathcal{P}_{\{X,Y\}}$ is contained in $\Theta(V(R))$ and the $X$--$Y$~link intersecting $\Theta(V(R))$ only in $X \cup Y$ contradicts the maximality of $\mathcal{P}_{\{X,Y\}}$.
			
			Thirdly, every component $K \in \cK(S \setminus V(T))$ with $N_K = V(R)$ avoids $\Theta(V(R))$:
			Suppose for a contradiction that there exists $\theta \in \Theta(V(R)) \cap K$.
			Pick $n \in \NN$ such that the $\leq_T$-maximal element $y$ of $V(R) \cap V(T_n)$ satisfies $\theta \in \Theta(y)$.
			Let $y^+$ be the child of $y$ in $V(R)$.
			Note that $K_{y^+}^T$ is a component in $\cK(S \setminus V(T_n))$ by \cref{prop:equivalence_weak_normal_tree}.
			Then $\theta \in \Theta(y) \cap K \subseteq \Theta(y) \cap K_{y^+}^T \subseteq V(T_{n+1})$ since $K_{y^+}^T \cap U \neq \emptyset$ by construction of $T$.
			This contradicts $\theta \notin V(T)$ and implies that $\Theta(V(R)) \cap \bigcap_{x \in V(R)} K_x^T = \emptyset$.
			
			By construction, $T$ is a weak normal tree.
			We have to show that $T$ is a normal tree.
			Let $R$ be an arbitrary rooted ray in $T$.
			We construct a witness $(H_n)_{n \in \NN}$ of a necklace and a strictly $\leq_T$-increasing sequence $(x_n)_{n \in \NN}$ of elements of $V(R)$ such that $\Down{x_n}_{T} \subseteq \bigcup_{i \in [n]} H_i \subseteq \Theta(V(R))$ and $K_{x_n}^T \cap \bigcup_{i \in [n-1]} H_i = \emptyset$.
			Then the necklace $\bigcup_{n \in \NN} H_n$ contains all of $V(R)$, which proves that $T$ is a normal tree.
			Set $H_1:=\{r\}$ and $x_1:=r$.
			We assume that $(H_i)_{i \in [n]}$ and $(x_i)_{i \in [n]}$ have been defined.
			Since $\bigcup_{i \in [n]} H_i \subseteq \Theta(V(R)) \subseteq S \setminus \bigcap_{x \in V(R)} K_x^{T}$, there exists $x_{n +1} \in V(R)$  such that $\bigcup_{i \in [n]} H_i \cap K_{x_{n+1}}^T = \emptyset$.
			Let $H_{n+1}$ be a finite connected set in $K_{x_n}^T \cap \Theta(V(R))$ that contains the finite set $\Down{x_{n+1}}_T \setminus \ODown{x_n}_T$.
			Then $(H_n)_{n \in \NN}$ and $(x_n)_{n \in \NN}$ are as desired.
			
			It remains to prove that $T$ contains $U$.
			Suppose for a contradiction that there exists a component $K$ of $\cK(S \setminus V(T))$ that contains an element of $U$.
			If $N_K$ is finite, there exists $n \in \NN$ such that $K$ is a component in $\cK(S \setminus V(T_n))$  by \cref{prop:finite_neighbourhood}.
			But then $T_{n+1}$ contains a vertex of $K$, a contradiction.
			Thus we can assume that $N_K$ is infinite and by \cref{prop:neighbourhood} that $N_K$ is the vertex set of a rooted ray in $T$.
			
			Let $i \in \NN$ such that $U_i \cap K \neq \emptyset$.
			By construction of $T$, $N_K$ contains infinitely many elements of $\bigcup_{j \leq i} U_j$.
			Thus there exists $j \in \NN$ such that $U_j \cap N_K$ is infinite.
			
			Let $k$ be an arbitrary element of $K$.
			We recursively define a family $(C_n)_{n \in \NN}$ of finite connected sets containing $k$ such that $(C_n \cap \Theta(N_K))_{n \in \NN}$ is a family of disjoint connected sets containing an element of $U_j$.
			Then the $(C_n  \cap \Theta(N_K))$--$(C_m  \cap \Theta(N_K))$~link $C_n \cup C_m$ witnesses that there exist uncountably many internally disjoint $(C_n  \cap \Theta(N_K))$--$(C_m  \cap \Theta(N_K))$~links for every $n \neq m \in \NN$.
			Thus there exists a fat~$TK_{\aleph_0}$~whose branch sets $(C_n \cap \Theta(N_K))_{n \in \NN}$ contain elements of $U_j$, contradicting the fat~$TK_{\aleph_0}$-dispersedness of $U_j$.
			
			For the construction of $(C_n)_{n \in \NN}$ assume that $(C_m)_{m \leq n}$ has been defined for some $n \in \NN$.
			Since $\bigcap_{x \in N_K} K_x^T  \cap \Theta(N_K)= \emptyset$, there exist $x \in N_K$ such that $K_x^T \cap \bigcup_{m \leq n} (C_m \cap \Theta(N_K)) = \emptyset$.
			Note that $K_x^T$ contains an element of $U_j \cap \Theta(N_K)$ as $V(R) \setminus \ODown{x}_T \subseteq \Theta(N_K) \cap K_x^T$ contains infinitely many element of $U_j$.
			Since $K_x^T$ is connected there exists a finite connected set $C_{n+1}' \subseteq K_x^T$ that contains $k$ and some element of $U_j \cap \Theta(N_K)$.
			Since $K_x^T \cap \Theta(N_K)$ is connected we can pick a finite connected set $C_{n+1}''$ in $K_x^T \cap \Theta(N_K)$ that contains $C_{n+1}' \cap \Theta(N_K)$.
			Then $C_{n+1}:= C_{n+1}' \cup C_{n+1}''$ is as desired.
			This finishes the construction of $(C_n)_{n \in \NN}$ and thus $T$ is indeed a normal tree rooted at $r$ containing $U$. \qedhere
		\end{description}
	\end{proof}
	By modifying the proof of \cref{thm:jung} slightly we can show:
	\begin{cor}
		Let $(S, \cC)$ be a connected connectoid and $U \subseteq S$. Then there exists a rayless normal tree that contains $U$ if and only if $U$ is dispersed.
	\end{cor}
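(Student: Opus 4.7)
The plan is to verify the corollary by adapting both directions of the proof of \cref{thm:jung} under the stronger assumption that $U$ itself is dispersed (rather than a countable union of dispersed sets).

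For the forward direction, let $T$ be a rayless normal tree containing $U$ and let $N$ be an arbitrary necklace. I would follow the dichotomy used in the argument \labelcref{itm:quasi_weak_normal_tree} implies \labelcref{itm:countable_union}: either there exists some $t \in T$ whose $\Down{t}_T$-tail of $N$ is disjoint from $V(T)$, in which case $N \cap V(T) \subseteq \Down{t}_T$ is finite (since $\Down{t}_T$ is the finite branch from the root to $t$); or the very recursion of that proof produces a rooted ray $(t_n)_{n \in \NN}$ in $T$, contradicting raylessness. Either way, $N \cap U$ is finite, so $U$ is dispersed.

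For the backward direction, suppose $U$ is dispersed. By the implication \labelcref{itm:countable_union} implies \labelcref{itm:countable_union_ka} applied with the trivial decomposition $U=U_1$, the set $U$ is fat~$TK_{\aleph_0}$-dispersed. I would then run the construction of \labelcref{itm:countable_union_ka} implies \labelcref{itm:normal_tree} with $U_i := U$ for every $i$, obtaining a normal tree $T = \bigcup_{n \in \NN} T_n$ containing $U$. It remains to show that $T$ is rayless. Suppose for a contradiction $R$ is a rooted ray in $T$. At each stage $n$, $T_{n+1}$ arises from $T_n$ by attaching, for each component $K \in \cK(S \setminus V(T_n))$ meeting $U$, a finite rooted normal tree $T_K$ joined to $T_n$ only via the edge $t_K u_K$, where $u_K \in U \cap K$. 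Since each $T_K$ is finite and $R$ is infinite, $R$ must enter infinitely many distinct $T_K$'s, and because each $T_K$ is attached to the previously constructed tree solely at its root $u_K$, every such entry occurs at a vertex of $U$. Hence $V(R) \cap U$ is infinite. But the necklace-construction at the end of the proof of \labelcref{itm:countable_union_ka} implies \labelcref{itm:normal_tree} yields a necklace containing $V(R)$, which then contains infinitely many elements of the dispersed set $U$---a contradiction.

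The main obstacle is the book-keeping in the backward direction: one has to argue carefully that a putative ray $R$ meets infinitely many of the attachment roots $u_K \in U$. This relies on the finiteness of each $T_K$, the pairwise disjointness of the $T_K$'s added at a given stage, and the fact that each $T_K$ is hung off the existing tree through the unique edge $t_K u_K$. Once this structural observation is secured, the necklace extraction already carried out inside the proof of \cref{thm:jung} delivers the contradiction for free.
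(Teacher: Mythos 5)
Your proof is correct and follows essentially the same route as the paper's: the forward direction reuses the terminating recursion from the step (3)$\Rightarrow$(4) of \cref{thm:jung}, and the backward direction runs the construction of (5)$\Rightarrow$(1) and observes that any rooted ray would contain infinitely many of the attachment roots $u_K \in U$ and hence lie in a necklace meeting $U$ infinitely often, contradicting dispersedness. One cosmetic slip: when the $\Down{t}_T$-tail of $N$ avoids $V(T)$, the reason $N \cap V(T)$ is finite is that $N$ minus its tail is finite (not that $N \cap V(T) \subseteq \Down{t}_T$), but the conclusion is unaffected.
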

	\begin{proof}
		For the forward implication we follow the lines of the argument that \labelcref{itm:normal_tree} implies \labelcref{itm:countable_union}.
		Given a rayless normal tree $T$ containing $U$ and some necklace, the construction of the strictly $\leq_T$-increasing sequence terminates and therefore every necklace has finite intersection with $V(T) \supseteq U$.
		
		For the backward implication we use the argument that \labelcref{itm:countable_union_ka} implies \labelcref{itm:weak_normal_tree}.
		Here we construct a normal tree $T$ whose rooted rays contain infinitely many elements of $U$.
		Since $U$ is dispersed, $T$ is rayless.
	\end{proof}
	
\subsection{A refined characterisation for directed graphs}
As undirected and directed graphs form connectoids, \cref{thm:jung} holds for these graphs.
But a fat~$TK_{\aleph_0}$ of an undirected graph is a way more concrete structure than the undirected graph induced by a fat~$TK_{\aleph_0}$ in the corresponding connectoid.
Therefore Pitz' characterisation \cite{pitz2021quickly}*{Theorem~4} of the existence of normal trees in undirected graphs via fat~$TK_{\aleph_0}$-dispersed sets is more precise than \cref{thm:jung}.

In this subsection we elaborate a more precise characterisation of normal trees of directed graphs similar to Pitz' result:
Given a directed graph $D$, a rooted tree $T \subseteq V(D)$ is \emph{normal} if it is normal in the corresponding connectoid $(V(D), \cC)$.
We define a thin subclass of the class of all directed graphs induced by a fat~$TK_{\aleph_0}$ in $(V(D), \cC)$ and show that this class is sufficient for the characterisation of normal trees of directed graphs.

It seems natural to consider the class $\mathbf{C}$ of directed graphs which are subdivisions of the directed multigraph obtained from the complete directed graph $\vec{K}_{\aleph_0}$ by replacing every edge with $\aleph_1$ parallel edges.
But the class $\mathbf{C}$ is not sufficient for a characterisation of normal trees in directed graphs as there exists a directed graph without an element of $\mathbf{C}$ and without normal tree:

Given a directed graph $D'$ we call the directed graph obtained from $D'$ by replacing each vertex $v$ by vertices $v^{\text{in}}$ and $v^{\text{out}}$ such that all incoming edges become incident to $v^{\text{in}}$ and all outgoing edges incident to $v^{\text{out}}$ and by adding a directed edge from $v^{\text{in}}$ to $v^{\text{out}}$ the \emph{split} of $D'$.
Let $H$ be the split of the complete directed graph $\vec{K}_{\aleph_1}$.
Then every vertex of $H$ has either in-degree $1$ or out-degree $1$ and therefore $H$ does not contain an element of $\mathbf{C}$.
Suppose for a contradiction that $H$ contains a normal spanning tree, then by~\cref{thm:jung} $V(H)=\bigcup_{m \in \NN}X_m$ for sets $X_m$ that are dispersed in the corresponding connectoid $(V(H), \cC)$.
Then some $X_m$ contains infinitely many vertices.
Let $(v_n)_{n \in \NN}$ be a sequence of distinct vertices in $\vec{K}_{\aleph_1}$ such that $\{v_n^{\text{in}},v_n^{\text{out}}\} \cap X_m\neq \emptyset$.
Furthermore, let $H_n$ be the connected set induced by the cycle on $v_n^{\text{in}},v_n^{\text{out}}, v_{n+1}^{\text{in}},v_{n+1}^{\text{out}}$.
Then $(H_n)_{n \in \NN}$ is a necklace in $(V(H), \cC)$ that contains infinitely many elements of $X_m$, a contradiction.

We show that the \emph{fat topological split $\vec{K}_{\aleph_0}$}, i.e.\ a subdivision of the split of the directed multigraph obtained from the $\vec{K}_{\aleph_0}$ by replacing every edge with $\aleph_1$ parallel edges, is the appropriate structure for a characterisation of normal trees in directed graphs.
We call a set $W$ \emph{fat topological split $\vec{K}_{\aleph_0}$-dispersed} in a directed graph $D$, if for every fat topological split $\vec{K}_{\aleph_0}$~$K$ there exists a finite set $X \subseteq S$ such that the unique strong component of $ D - X$ containing almost all branch vertices of $K$ avoids $W$.

\begin{thm}
	Let $D$ be a directed graph and $U \subseteq V(D)$.
	Then there exists a normal tree containing $U$ if and only if $U$ is a countable union of fat topological split $\vec{K}_{\aleph_0}$-dispersed sets.
\end{thm}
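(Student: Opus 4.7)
The plan is to mirror the structure of the proof of Theorem~\ref{thm:jung}. Since directed graphs form connectoids, Theorem~\ref{thm:jung} already characterises the existence of a normal tree containing $U$ in terms of countable unions of dispersed sets and of fat~$TK_{\aleph_0}$-dispersed sets in the connectoid sense. I would refine both directions of the equivalence so that these conditions can be replaced by fat topological split $\vec{K}_{\aleph_0}$-dispersedness in the directed graph sense.

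For the forward direction, I would show that every dispersed set $W \subseteq V(D)$ is fat topological split $\vec{K}_{\aleph_0}$-dispersed, so that by Theorem~\ref{thm:jung} the set $U$ contained in a normal tree decomposes into countably many such sets. Assume for a contradiction that some fat topological split $\vec{K}_{\aleph_0}$~$K$ witnesses a failure: for every finite $X$ the unique component of $D-X$ containing almost all branch vertices of $K$ contains an element of $W$. I would then inductively build a necklace $\bigcup_n H_n$ with infinite intersection with $W$. At step $n$, having fixed finite strongly connected sets $H_0, \ldots, H_{n-1}$ with the necklace overlap pattern, the hypothesis locates an element $w_n \in W$ in the big component of $D - \bigcup_{i \leq n-2} H_i$, and the $\aleph_1$ internally disjoint subdivided parallel paths of $K$ between any two branch pairs allow one to attach a new finite strongly connected set $H_n$ to $H_{n-1}$ along a fresh branch pair of $K$ while avoiding all earlier $H_i$.

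For the backward direction, I would adapt the proof of \labelcref{itm:countable_union_ka} implies \labelcref{itm:normal_tree} from Theorem~\ref{thm:jung}. The recursive construction of the normal tree $T = \bigcup_n T_n$ and of the auxiliary function $\Theta$ transfers verbatim, except at the final step where a contradiction must be derived from a component $K \in \cK(S \setminus V(T))$ with infinite $N_K$ and $U_j \cap K \neq \emptyset$. Instead of producing a fat~$TK_{\aleph_0}$ as in the original proof, I would refine the family $(C_n)_n$ of finite strongly connected sets so that the branch sets $B_n := C_n \cap \Theta(N_K)$ together with the $\aleph_1$ internally disjoint $B_n$--$B_m$ links witness a fat topological split $\vec{K}_{\aleph_0}$. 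Concretely, for each $n$ I would select an entry/exit vertex pair $(v_n^-, v_n^+) \in B_n \times B_n$ shared by $\aleph_1$ of the links, and extract subdivided directed paths from those links to assemble the split structure; choosing the elements $u_K \in U_j$ of the original construction to sit on the split edges ensures that each resulting branch pair $\{v_n^-, v_n^+\}$ meets $U_j$, yielding a contradiction with the fat topological split $\vec{K}_{\aleph_0}$-dispersedness of $U_j$.

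The main obstacle is the pigeonhole stabilisation of the split vertex pairs $(v_n^-, v_n^+)$. For a single pair $(n,m)$, the finiteness of $B_n, B_m$ and the uncountability of the $B_n$--$B_m$ links force a shared entry/exit pair, but one needs the same $(v_n^-, v_n^+)$ to work for all $m$ in an infinite set; a diagonal extraction handles this. The delicate point is that the extraction must be interleaved with the construction of $(C_n)_n$ rather than applied a posteriori, so that along the resulting thin subsequence the branch pairs still meet $U_j$ and the extracted directed paths remain pairwise internally disjoint.
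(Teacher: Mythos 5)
Your forward direction is essentially the paper's argument and is sound: one threads a necklace through the ``good'' strong components of $D-\bigcup_{i\le n}H_i$, picking up a fresh element of $W$ at each step; the $\aleph_1$ parallel paths of $K$ are not even needed beyond the fact that each good component is strongly connected and, by hypothesis, meets $W$.

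The backward direction has a genuine gap, and it sits exactly where you place your ``main obstacle''. You keep the construction of $\Theta$ \emph{verbatim}, so the fixed maximal families $\mathcal{P}_{\{X,Y\}}$ consist of internally disjoint strongly connected \emph{links} between finite sets, and you then try to extract the split structure from the resulting uncountable link families a posteriori. Two things go wrong. First, the stabilisation you need is not merely ``the same $(v_n^-,v_n^+)$ for all $m$ in an infinite set'' for each $n$ separately, but a simultaneous canonical form $h_{n\to m}=v_n^+$ and $t_{n\to m}=v_m^-$ for \emph{all ordered pairs} in the surviving index set; the nested thinning you sketch only controls the pairs whose second index is chosen later, while each ``backward'' pair comes from a different uncountable link family and has no reason to exit its branch set at the vertex you already committed to (a Ramsey-type colouring of pairs does not exclude the order-dependent colours). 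Second, and more fundamentally, even if you fix the desired pair $(a_n,b_m)$ in advance, the object witnessing a potential parallel subdivided edge from $a_n$ to $b_m$ is a single \emph{directed path} internally avoiding $\Theta(V(R))$; this is not strongly connected, hence not a link, so the maximality of $\mathcal{P}_{\{\{a_n\},\{b_m\}\}}$ gives you nothing. The paper's proof therefore modifies the construction of $\Theta$: the fixed maximal families are families of internally disjoint directed $x$--$y$ \emph{paths} for pairs of vertices, and the branch pair $a_n,b_n$ is chosen inside a strongly connected $H_n\subseteq D[\Theta(V(R))]$ as the endpoints of a $\Theta(V(R))$--$k$ path and a $k$--$\Theta(V(R))$ path. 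Concatenation through $K$ then yields, for \emph{every} pair, one $a_n$--$b_m$ path internally avoiding $\Theta(V(R))$ and hence uncountably many internally disjoint ones, with no extraction or interleaving required. Your final worry is also misplaced: a subdivision vertex of a split edge is not a branch vertex, so placing $u_K$ on a split edge would not make the branch pair meet $U_j$; but none of this is needed, since $H_n$ is strongly connected, meets $U_j$, and contains its branch pair, so for every finite $X$ the good component of $D-X$ absorbs all but finitely many $H_n$ together with their $U_j$-elements.
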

\begin{proof}
	For the forward direction we use \cref{thm:jung} and show that every dispersed set is also fat topological split $\vec{K}_{\aleph_0}$-dispersed.
	Let $W$ be a dispersed set and suppose for a contradiction that there exists a fat topological split $\vec{K}_{\aleph_0}$~$K$ such that for every finite set $X \subseteq V(D)$ the strong component of $D - X$ containing almost all branch vertices of $K$ contains an element of $W$.
	We call this strong component of $D - X$ the \emph{good} strong component.
	
	We construct recursively a witness $(H_n)_{n \in \NN}$ of a necklace such that $H_n$ contains an element of $W$ from the good strong component of $D - \bigcup_{i \in [n-1]} H_i$.
	If $(H_i)_{i \in [n]}$ has been defined, let $H_{n+1}$ be a finite connected set in the good strong component of $D - \bigcup_{i \in [n-1]} H_i$ that contains an element of $H_n$ and an element of $W$ from the good strong component of $D - \bigcup_{i \in [n]} H_i$.
	Then the necklace $\bigcup_{n \in \NN} H_n$ contains infinitely many elements of $W$, a contradiction.
	
	For the backward direction we follow the lines of the proof that \labelcref{itm:countable_union_ka} implies \labelcref{itm:normal_tree} in \cref{thm:jung}.
	We consider internally disjoint $x$--$y$~paths for vertices $x, y \in V(D)$ instead of internally disjoint links.
	Furthermore, instead of constructing the connected sets $(C_n)_{n \in \NN}$ we construct disjoint finite connected subgraphs $(H_n)_{n \in \NN}$ in $D[\Theta(V(R))]$ such that each $V(H_n)$ contains an element of $U_j$ and distinct vertices $a_n, b_n$ such that there exist a $\Theta(V(R))$--$k$~path starting in $a_n$ and a $k$--$\Theta(V(R))$~path ending in $b_n$.
	By picking a $b_n$--$a_n$~path in $H_n$ for every $n \in \NN$ we obtain a fat topological split $\vec{K}_{\aleph_0}$ that cannot be finitely separated from $U_j$.
\end{proof}

\subsection{Closure under taking subconnectoids and minors}\label{sec:subconnectoid_minor}
In this subsection we deduce from \cref{thm:jung} that the existence of normal trees is closed under taking subconnectoids and minors.
More precisely, we show that dispersedness is closed under these relations.
Let $(S, \cC)$ be a connectoid and $U \subseteq S$ a dispersed set with respect to $(S, \cC)$.

As every necklace in a subconnectoid $(S', \cC')$ of $(S, \cC)$ is also a necklace in $(S, \cC)$, the set $U \cap S'$ is dispersed in $(S', \cC')$.
\begin{cor}\label{cor:normal_tree_closed_subgraph}
    Let $(S, \cC)$ be a connectoid and $U \subseteq S$ a set that is contained in a normal tree of $(S, \cC)$.
    Then for every connected subconnectoid $(S', \cC')$ of $(S, \cC)$ there is a normal tree of $(S', \cC')$ that contains $U \cap S'$.
\end{cor}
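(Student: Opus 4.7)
My plan is to reduce to \cref{thm:jung} by routing everything through the characterisation in terms of countable unions of dispersed sets. Since $U$ is contained in a normal tree of $(S, \cC)$, in particular it is contained in a weak normal tree, so the implication \labelcref{itm:weak_normal_tree}$\Rightarrow$\labelcref{itm:countable_union} of \cref{thm:jung} (whose proof does not use connectedness of the ambient connectoid) yields a countable family $(U_n)_{n \in \NN}$ of dispersed sets in $(S, \cC)$ with $U \subseteq \bigcup_{n \in \NN} U_n$. Then
\[
	U \cap S' \subseteq \bigcup_{n \in \NN} (U_n \cap S'),
\]
so it suffices to show that each $U_n \cap S'$ is dispersed in $(S', \cC')$ and then apply \labelcref{itm:countable_union}$\Rightarrow$\labelcref{itm:normal_tree} to the connected connectoid $(S', \cC')$.

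For the key observation, I would verify that any necklace $N$ of $(S', \cC')$ is also a necklace of $(S, \cC)$. This follows straight from the definitions: a witness $(H_n)_{n \in \NN}$ consists of finite connected sets of $(S', \cC')$, and since $\cC' \subseteq \cC$ every $H_n$ is also a finite connected set of $(S, \cC)$, while the intersection pattern $H_i \cap H_j \neq \emptyset \iff |i-j| \leq 1$ is an internal condition on the family that is preserved. Consequently $N \cap U_n$ is finite for any necklace $N$ of $(S', \cC')$, so $N \cap (U_n \cap S') = N \cap U_n$ is finite as well, giving dispersedness of $U_n \cap S'$ in $(S', \cC')$.

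The main obstacle is essentially non-existent here once \cref{thm:jung} is available: the entire argument is an application of the theorem together with the trivial observation on necklaces in subconnectoids. One could instead try a direct construction by restricting the given normal tree $T$ of $(S, \cC)$ to $V(T) \cap S'$, but this generally fails to yield a tree structure adequate for $(S', \cC')$ because connected sets in $(S, \cC)$ need not restrict to connected sets in $(S', \cC')$. The detour through dispersedness avoids this difficulty, and this is precisely the strength of \cref{thm:jung}.
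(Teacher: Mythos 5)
Your proof is correct and follows essentially the same route as the paper: Theorem~\ref{thm:jung} converts containment in a normal tree into a countable union of dispersed sets, the observation that every necklace of $(S',\cC')$ is a necklace of $(S,\cC)$ shows each $U_n\cap S'$ is dispersed in the subconnectoid, and Theorem~\ref{thm:jung} applied to $(S',\cC')$ finishes. Your extra remark that the implication from weak normal trees to countable unions of dispersed sets does not need connectedness of the ambient connectoid is a worthwhile point of care that the paper glosses over.
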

Now we consider a minor $(S'', \cC'')$ of $(S, \cC)$ and prove that $\{s \in S'': s \cap U \neq \emptyset\}$ is a dispersed set with respect to $(S'', \cC'')$.
Suppose for a contradiction that there is a necklace $N$ of $(S'', \cC'')$ that has infinite intersection with $\{s \in S'': s \cap U \neq \emptyset\}$, i.e.\ infinitely many elements of $N$ contain an element of $U$.

We construct a witness $(H_n)_{n \in \NN}$ of $N$ such that $\bigcup H_n \cap \bigcup H_{n + 1} \cap U \neq \emptyset$ for every $n \in \NN$:
Let $(\tilde{H}_n)_{n \in \NN}$ be some witness of $N$ and let $(k(n))_{n \in \NN}$ be an increasing sequence of natural numbers with $\bigcup \tilde{H}_{k(n)} \cap U \neq \emptyset$ for every $n \in \NN$.
Set $H_n:= \bigcup_{i \in [k(n), k(n+1)]} \tilde{H}_i$.
Then $(H_n)_{n \in \NN}$ is the desired witness of $N$.

Pick $u_n \in \bigcup H_n \cap \bigcup H_{n + 1} \cap U$ for every $n \in \NN$.
There is a finite connected set $H_n' \subseteq \bigcup H_n$ containing $u_n$ and $u_{n + 1}$ since $\bigcup H_n \in \cC$. Thus $\bigcup_{n \in \NN} H_n'$ is a necklace in $(S, \cC)$ containing infinitely many elements of $U$, contradicting that $U$ is dispersed.
We deduce:
\begin{cor}\label{cor:normal_tree_closed_minor}
    Let $(S, \cC)$ be a connectoid and $U \subseteq S$ a set that is contained in a normal tree of $(S, \cC)$.
    Then for every connected minor $(S'', \cC'')$ of $(S, \cC)$ there is a normal tree of $(S'', \cC'')$ that contains $\{s \in S'': s \cap U \neq \emptyset\}$.
\end{cor}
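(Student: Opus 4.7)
The plan is to combine the dispersedness-preservation argument just established with the characterisation in \cref{thm:jung}. Since $U$ is contained in a normal tree of $(S, \cC)$, condition \labelcref{itm:normal_tree} of \cref{thm:jung} holds for $U$, so by the equivalence with \labelcref{itm:countable_union} we can write $U = \bigcup_{i \in \NN} U_i$ where each $U_i \subseteq S$ is dispersed in $(S, \cC)$.

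Next I would apply the argument preceding the corollary to each piece $U_i$ individually. That argument shows that if $W \subseteq S$ is dispersed in $(S, \cC)$, then $W'' := \{s \in S'' : s \cap W \neq \emptyset\}$ is dispersed in the minor $(S'', \cC'')$: any necklace in $(S'', \cC'')$ witnessing infinite intersection with $W''$ can be turned, by thinning the witness so that consecutive finite connected sets share an element of $W$, into a necklace of $(S, \cC)$ meeting $W$ infinitely often, contradicting dispersedness. Applying this to each $U_i$ yields that $U_i'' := \{s \in S'' : s \cap U_i \neq \emptyset\}$ is dispersed in $(S'', \cC'')$ for each $i \in \NN$.

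Finally, observe that
\[
  \{s \in S'' : s \cap U \neq \emptyset\} \;=\; \bigcup_{i \in \NN} U_i'',
\]
since an element $s \in S''$ meets $U$ if and only if it meets some $U_i$. Hence $\{s \in S'' : s \cap U \neq \emptyset\}$ is a countable union of sets dispersed in $(S'', \cC'')$, and by the implication \labelcref{itm:countable_union} $\Rightarrow$ \labelcref{itm:normal_tree} of \cref{thm:jung} applied to the connected connectoid $(S'', \cC'')$, there exists a normal tree of $(S'', \cC'')$ containing this set.

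The only nontrivial step is the dispersedness-preservation under taking minors, which is exactly the thinning argument carried out immediately before the corollary is stated; the rest of the proof is a routine ``unpack via \cref{thm:jung}, transfer, repack via \cref{thm:jung}'' argument, so I do not anticipate any further obstacle.
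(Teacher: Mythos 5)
Your proposal is correct and follows essentially the same route as the paper: the text immediately preceding the corollary establishes that dispersedness is preserved under taking minors (via the thinning/witness argument you describe), and the corollary is then deduced exactly by decomposing $U$ into countably many dispersed sets via \cref{thm:jung}\labelcref{itm:countable_union}, transferring each piece to the minor, and reassembling via \cref{thm:jung}. No gaps.
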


\section{Characterisation via ends} \label{sec:ends}
We characterise the existence of normal spanning trees via the existence of normal spanning trees in the ``vicinity'' of ends:
\CharacterisationViaEnds*
The main ingredient for this proof is the approximation of connectoids by rayless normal trees:

\begin{thm}[\cite{connectoids1}*{Theorem~1.3}]\label{thm:approx}
Let $(S, \cC)$ be a connected connectoid. For every collection $\mathcal{K} = \{K(X_\omega, \omega): \omega \in \Omega(S, \cC)\}$, where $X_\omega \subseteq S$ is finite for every $\omega \in \Omega(S, \cC)$, there is a rayless normal tree $T$
of $(S, \cC)$ such that every component in $\cK(S \setminus V(T))$ is subset of an element of $\mathcal{K}$.
\end{thm}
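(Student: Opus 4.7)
The plan is to start from a rayless normal tree containing the dispersed set outside all end-neighbourhoods, and then refine it by transfinite recursion into the components that straddle several end-neighbourhoods.

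First, I observe that the set $D_0 := S \setminus \bigcup_{\omega \in \Omega(S, \cC)} K(X_\omega, \omega)$ is dispersed: any necklace $N$ converges to a unique end $\omega$, whose tail lies in $K(X_\omega, \omega)$, so $N \cap D_0$ is finite. By the rayless variant of \cref{thm:jung} (the corollary following it), there is a rayless normal tree $T_0$ of $(S, \cC)$ rooted at $r$ with $V(T_0) \supseteq D_0$. Consequently every component $K \in \cK(S \setminus V(T_0))$ lies in $\bigcup_\omega K(X_\omega, \omega)$; the only remaining issue is that some such $K$ may \emph{straddle} several end-neighbourhoods, meaning $K \subseteq \bigcup_\omega K(X_\omega, \omega)$ but $K \not\subseteq K(X_\omega, \omega)$ for any single $\omega$.

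A key subclaim is that a straddling $K$ must meet $X_\omega$ for some $\omega$: pick $v \in K \cap K(X_\omega, \omega)$ and $u \in K \setminus K(X_\omega, \omega)$; a finite connected $C \subseteq K$ contains both by connectedness of $K$, and since $K(X_\omega, \omega)$ is a component of $S \setminus X_\omega$, the connected set $C$ must meet $X_\omega$. Using this, one refines $T_0$ by grafting, via \cref{cor:finite_normal_tree} and \cref{prop:extension_normal_tree}, a finite normal tree of the induced subconnectoid on each straddling $K$, containing the finite set $K \cap X_\omega$ for such an $\omega$. After this step, every new sub-component of $K$ lies entirely within a single component of $S \setminus X_\omega$, so the $\omega$-straddling within $K$ has been eliminated, and the resulting tree is still a rayless normal tree by \cref{prop:extension_normal_tree}.

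Iterating this refinement transfinitely --- at successor stages eliminating one more straddling, at limit stages taking unions --- should yield the desired tree $T$. The main obstacle is to verify termination and to preserve raylessness through limits. The guiding intuition for raylessness is this: a rooted ray $R$ in $T$ would, by \cref{prop:converging}, converge to some end $\omega$ with a tail inside $K(X_\omega, \omega)$, so at some sufficiently late stage the component of $\cK(S \setminus V(T_\alpha))$ carrying that tail would already lie in $K(X_\omega, \omega)$ and thus be non-straddling, and the construction would not have extended further in that direction --- contradicting the existence of $R$. Making this precise requires a careful well-founded rank on straddling components (for instance, capturing the complexity of the family $\{\omega : K \cap K(X_\omega, \omega) \neq \emptyset \text{ and } K \not\subseteq K(X_\omega, \omega)\}$) so that each grafting strictly decreases the rank of all resulting sub-components, forcing the recursion to halt at a countable ordinal with a rayless $T$ whose complement-components are each contained in some $K(X_\omega, \omega) \in \mathcal{K}$.
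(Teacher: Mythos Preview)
This paper does not prove \cref{thm:approx}; it is quoted from the companion paper \cite{connectoids1}*{Theorem~1.3} and used as a black box in the proof of \cref{thm:ends}. So there is no in-paper argument to compare against.

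On its own merits, your proposal has a genuine gap at the decisive step. The opening is sound: $D_0$ is dispersed, the corollary after \cref{thm:jung} gives a rayless normal tree $T_0 \supseteq D_0$, and your subclaim that a straddling component meets some $X_\omega$ is correct. But the termination of the transfinite refinement is only gestured at. You acknowledge that ``making this precise requires a careful well-founded rank'' and suggest one based on the set of ends for which $K$ straddles, yet you do not construct it. It is true that grafting in $K \cap X_\omega$ removes $\omega$ from the straddling set of every resulting sub-component; however, when that set is infinite --- and $\Omega(S,\cC)$ can have any cardinality --- strict inclusion of subsets is not a well-founded relation, so removing one end per step yields no ordinal descent. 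Your assertion that the recursion halts ``at a countable ordinal'' has no justification.

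There is a second, related gap in the raylessness argument at limit stages. You claim that a rooted ray $R$ in the limit tree would converge to some end by \cref{prop:converging}. But that convergence is precisely the extra axiom distinguishing normal trees from weak normal trees; an increasing union of rayless normal trees is a priori only a \emph{weak} normal tree, and a rooted ray in it need not satisfy condition~\labelcref{itm:converging_2} of \cref{prop:converging}. Invoking end-convergence here assumes what must be proved. Both issues --- the missing well-founded rank and the unjustified convergence --- sit at the heart of the theorem, and neither is resolved in your sketch.
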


\begin{proof}[Proof of \cref{thm:ends}]
    The forward direction follows from \cref{cor:normal_tree_closed_subgraph}.
    For the backward direction, let $\cK:= \{K(X_\omega, \omega): \omega \in \Omega(S, \cC)\}$ be a collection such that the set $U \cap K(X_\omega, \omega)$ is contained in a normal tree of the induced subconnectoid on $K(X_\omega, \omega)$ for every $\omega \in \Omega(S, \cC)$. We apply \cref{thm:approx} to the collection $\cK$ to obtain a rayless normal tree $T$ of $(S, \cC)$ such that every component in $\cK(S \setminus V(T))$ is contained in an element of $\cK$.
    
    Let $K$ be an arbitrary component in $\cK(S \setminus V(T))$ and $\omega \in \Omega(S, \cC)$ such that $K \subseteq K(X_\omega, \omega)$.
    By assumption and by \cref{cor:normal_tree_closed_subgraph}, there is a normal tree $T_K$ of the induced subconnectoid on $K$ that contains $U \cap K$.
    Furthermore, by \cref{prop:neighbourhood} and since $T$ is rayless, $N_K$ is finite and thus $K$ has finite adhesion to $V(T)$ by \cref{prop:finite_neighbourhood}.
    
    We apply \cref{prop:extension_normal_tree} to $T$ and $T_K$ for every $K \in \cK(S \setminus V(T))$ to obtain a normal tree $T'$ in $(S, \cC)$ that contains $U$.
\end{proof}

\section{Characterisation via tree-decomposition into finite parts}\label{sec:tree_decomposition}
For undirected graphs it is well-known that there exists a normal spanning tree if and only if there exists a tree-decomposition into finite parts (see \cite{heuer2017excluding}*{Lemma~2.2} for the forward direction).
In this section we introduce a notion of tree-decomposition for connectoids and show that there exists a normal spanning tree of a connectoid if and only if there exists a tree-decomposition into finite parts.

More precisely, we adapt the common notion of tree-decomposition for directed graphs \cite{johnson2001directed} to connectoids.
A \emph{tree-decomposition} of a connectoid $(S, \cC)$ is a triple $(T, \beta, \gamma)$, where $T$ is a rooted tree, $\beta: V(T) \rightarrow 2^S$, $\gamma: E(T) \rightarrow 2^S$ such that
\begin{itemize}
	\item $(\beta(t))_{t \in V(T)}$ is a partition of $S$,
	\item $\bigcup_{t \in \Up{v}_T} \beta(t)$ is a component in $\cK(S \setminus \gamma(e))$ for every $uv \in E(T)$ with $u <_T v$.
\end{itemize} 
For every $t \in V(T)$ we define $\Gamma(t):= \beta(t) \cup \bigcup_{e \sim t} \gamma(e)$, where $e \sim t$ if $e$ is incident with $t$.
We say $(T, \beta, \gamma)$ is a \emph{tree-decomposition into finite parts} if $\Gamma(t)$ is finite for every $t \in V(T)$.

\characterisationTreeDecomp*
\begin{proof}
	For the forward direction let $T$ be a normal spanning tree of $(S, \cC)$.
	We set $\beta(t):= \{t\}$ for every $t \in V(T)$ and $\gamma(uv):= \Down{u}_T$ for every $uv \in E(T)$ with $u <_T v$.
	By \cref{prop:equivalence_weak_normal_tree}, given $uv \in E(T)$ with $u <_T v$, $\Up{v}_T = \bigcup_{t \in \Up{v}_T} \beta(t)$ is a component in $\cK(S \setminus \Down{u}_T)$.
	Note that $\Gamma(t)= \Down{t}_T$ is finite for every $t \in V(T)$ and thus $(T, \beta, \gamma)$ is a tree-decomposition into finite parts.
	
	For the backward direction let $(T, \beta, \gamma)$ be a tree-decomposition of $(S, \cC)$ into finite parts.
	Let $D_n$ be the $n$-th distance class of $T$ with respect to its root.
	We show that $S_n := \bigcup_{t \in D_n} \beta(t)$ is a dispersed set for every $n \in \NN$.
	Then $S$ is a countable union of dispersed sets and by \cref{thm:jung} there exists a normal spanning tree of $(S, \cC)$.
	
	Let $N$ be an arbitrary necklace in $(S, \cC)$.
	We construct a rooted ray $(t_n)_{n \in \NN}$ of $T$ such that for every $n \in \NN$ some tail of $N$ is contained in $\bigcup_{t \in \Up{t_n}_T} \beta(t)$.
	Set $t_0$ to be the root of $T$ and assume that some $t_n$ has been defined.
	Then some tail of $N$ is contained in $\bigcup_{t \in \OUp{t_n}_T} \beta(t)$ since $\beta(t_n)$ is finite.
	By definition of tree-decomposition, each component in $\cK(S \setminus \Gamma(t_n))$ that intersects $\bigcup_{t \in \OUp{t_n}_T} \beta(t)$ is subset of a set of the form $\bigcup_{t \in \Up{s}_T} \beta(t)$ for a child $s$ of $t_n$.
	Let $t_{n+1}$ be the child of $t_n$ such that the component in $\cK(S \setminus \Gamma(t_n))$ containing the $\Gamma(t_n)$-tail of $N$ is contained in $\bigcup_{t \in \Up{t_{n+1}}_T} \beta(t)$.
	This finishes the construction of $(t_n)_{n \in \NN}$.
	Then $N$ has finite intersection with $S_n$ since some tail of $N$ is contained in $\bigcup_{t \in \Up{t_{n+1}}_T} \beta(t)$, which proves that $S_n$ is dispersed.
\end{proof}

\section{Normal partition trees} \label{sec:normal_substructure}
In this section we introduce normal partition trees for connectoids, which generalise normal partition trees of undirected graphs (see \cite{brochet1994normal} for more details). We prove that every connected connectoid has a normal partition tree.

We begin by extending the concept of normality from trees to order trees.
Let $(S, \cC)$ be a connected connectoid.
An order tree $T$ is a \emph{normal spanning order tree} of $(S, \cC$) if $V(T) = S$ and
\begin{itemize}
	\item for every connected set $C \in \cC$ and every two $\leq_T$-incomparable elements $u, v \in C \cap V(T)$ the set $\Down{u}_T \cap \Down{v}_T \cap C$ is nonempty, and
	\item for every two $\leq_T$-comparable elements $u \leq_T v$ there is a connected set $C \in \cC$ containing $u$ and $v$ such that $C \cap \ODown{u}_T = \emptyset$.
\end{itemize}
\noindent
In the special case of trees a tree $T$ is a normal spanning order tree if and only if $T$ is a normal spanning tree.

\begin{prop}\label{lem:tdigraph_property}
	Let $T$ be a normal spanning order tree of some connectoid $(S, \cC)$. Then the following properties hold:
	\begin{itemize}
		\item every connected set has a unique $\leq_T$-least element, and
		\item for every down-closed subtree $T' \subseteq T$ the components in $\cK(S \setminus V(T'))$ are the sets $\Up{t}_T$ for $t$ $\leq_T$-minimal in $S \setminus V(T')$.
	\end{itemize}
\end{prop}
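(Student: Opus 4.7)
The plan is to prove both bullets by mimicking the classical arguments for normal spanning trees in graphs, with (1) used as a lemma toward (2).

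For (1), I would fix a connected set $C \in \cC$, pick any $c \in C$, and let $c'$ be the $\leq_T$-least element of $\Down{c}_T \cap C$, which exists because $\Down{c}_T$ is well-ordered by the order-tree axiom. I then claim $c'$ is in fact $\leq_T$-least in all of $C$. Given any other $c'' \in C$, there are two cases. If $c'$ and $c''$ are comparable, then $c' \leq_T c''$ is automatic (were $c'' <_T c'$, we would have $c'' \in \ODown{c}_T \cap C$, contradicting minimality of $c'$). If $c'$ and $c''$ are incomparable, the first defining property of a normal spanning order tree supplies some $c''' \in \Down{c'}_T \cap \Down{c''}_T \cap C$; then $c''' \in \Down{c}_T \cap C$ forces $c' \leq_T c'''$, and $c''' \leq_T c'$ forces $c''' = c'$, but then $c' \leq_T c''$ contradicts the incomparability. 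So $c'$ is the desired least element.

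For (2), let $T' \subseteq T$ be a down-closed subtree. I first check that the $\leq_T$-minimal elements of $S \setminus V(T')$ are exactly the $\leq_T$-least elements of the components in $\cK(S \setminus V(T'))$. Given a component $K$, by (1) it has a $\leq_T$-least element $t$; if some $s <_T t$ lay in $S \setminus V(T')$, then since $T'$ is down-closed, $\ODown{t}_T \not\subseteq V(T')$, but we can argue $t$ is also $\leq_T$-minimal in $S \setminus V(T')$ because any strict predecessor of $t$ in $S\setminus V(T')$ would contradict down-closedness of $T'$ relative to the choice of $t$ as minimal in $K$. Conversely, a $\leq_T$-minimal $t$ in $S \setminus V(T')$ sits in some component, and the minimum of that component cannot be strictly below $t$ by the same down-closedness argument. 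So the bijection between the two sets of minima is established.

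It remains to identify the component containing such a minimal $t$ as $\Up{t}_T$. One direction, $K \subseteq \Up{t}_T$, is immediate from (1) applied to $K$. The harder inclusion is $\Up{t}_T \subseteq K$: given $v \in \Up{t}_T$, the second defining property produces $C_v \in \cC$ containing $t$ and $v$ with $C_v \cap \ODown{t}_T = \emptyset$; I must show $C_v$ avoids $V(T')$, so that $v$ lies in the same component as $t$. This is the main obstacle. I would apply (1) to $C_v$ to get its $\leq_T$-least element $w$; since $t \in C_v$ and $C_v \cap \ODown{t}_T = \emptyset$, we must have $w = t$. If some $u \in C_v \cap V(T')$ existed, then $t = w \leq_T u$, and the case $u = t$ is ruled out because $t \notin V(T')$, so $t <_T u$, placing $u \in \OUp{t}_T$; but then $u \in \Up{t}_T \cap V(T') = \emptyset$ (using down-closedness of $T'$ to see $\Up{t}_T$ is disjoint from $V(T')$), a contradiction. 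Hence $C_v \subseteq S \setminus V(T')$, giving $v \in K$, and the proof is complete.
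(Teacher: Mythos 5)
Your proof is correct and is essentially the argument the paper intends: the paper's proof simply defers to the proof of \cref{lem:nt_properties}, which rests on the order-tree analogue of $K_t^T=\Up{t}_T$ from \cref{prop:equivalence_weak_normal_tree}, and your ``harder inclusion'' $\Up{t}_T\subseteq K$ via the second normality axiom is exactly the content of that analogue, made explicit from the axioms. The only spot where your justification is off is the claim that the $\leq_T$-least element $t$ of a component $K$ is $\leq_T$-minimal in $S\setminus V(T')$: down-closedness of $T'$ alone does not yield a contradiction from a predecessor $s<_T t$ with $s\notin V(T')$, since it does not connect $s$ to $t$ inside $S\setminus V(T')$; you need to apply the second normality axiom to the pair $s\leq_T t$ (exactly as in your $C_v$ argument) to get a connected set in $\Up{s}_T\subseteq S\setminus V(T')$ containing both, whence $s\in K$ contradicts the minimality of $t$ in $K$. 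With that one-line repair, which uses only tools already present in your write-up, the proof is complete.
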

\begin{proof}
	The same proof as the proof of \cref{lem:nt_properties} since this proof does not use the fact that $T$ is a tree.
\end{proof}

Given a normal spanning order tree $T$ of $(S, \cC)$ we define for every $t \in T$
\[
N_t := \Set{s \in \ODown{t}_T: s, t \text{ are contained in the same component in } \cK(\Up{s}_T \setminus \OUp{t}_T)}.
\]
Now we generalise the concept of $T$-graphs to connectoids (see~\cite{diestel2001normal} for an introduction to $T$-graphs):
The connectoid $(S, \cC)$ is a \emph{$T$-connectoid} if $T$ is a normal spanning order tree of $(S, \cC)$ such that the set $N_t$ is cofinal in $\ODown{t}_T$ for every $t \in T$.

We recall that, given an order tree $T$ and an ordinal $\alpha$, the levels of $T$ are defined as $T^\alpha:= \{t \in T: \ODown{t}_T \text{ has order type } \alpha \}$.

\begin{prop}\label{lem:interval}
	Let $(S, \cC)$ be a $T$-connectoid for some normal spanning order tree $T$. Then every two elements $t <_T t' \in T$ are contained in the same component in $\cK(\Up{t}_T \setminus \OUp{t'}_T)$.
\end{prop}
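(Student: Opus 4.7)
My plan is to introduce the auxiliary set
\[
N^*_{t'} := \Set{s \in \Down{t'}_T : s \text{ and } t' \text{ lie in the same component in } \cK(\Up{s}_T \setminus \OUp{t'}_T)},
\]
and prove $\Down{t'}_T \subseteq N^*_{t'}$; specialising to $s = t$ then yields the proposition. Trivially $t' \in N^*_{t'}$ via the singleton $\{t'\}$, and $N_{t'} \subseteq N^*_{t'}$ is immediate from the definitions.

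The crux of the argument is the following lifting lemma: \emph{if $z \in N^*_{t'}$ and $s <_T z$, then there exists $y \in N^*_{t'}$ with $s \leq_T y <_T z$.} For $z = t'$, I would invoke cofinality of $N_{t'}$ in $\ODown{t'}_T$ to choose $y \in N_{t'}$ with $y \geq_T s$ and use $N_{t'} \subseteq N^*_{t'}$. For $z <_T t'$, I would apply the $T$-connectoid condition to pick $y \in N_z$ with $y \geq_T s$ (using cofinality of $N_z$ in $\ODown{z}_T$), and then glue a connected set $D \subseteq \Up{z}_T \setminus \OUp{t'}_T$ witnessing $z \in N^*_{t'}$ to a connected set $F \subseteq \Up{y}_T \setminus \OUp{z}_T$ witnessing $y \in N_z$ along their common point $z$. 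The union $D \cup F$ is connected, contains $y$ and $t'$, and lies in $\Up{y}_T \setminus \OUp{t'}_T$: both pieces are contained in $\Up{y}_T$ because $y \leq_T z$, the set $D$ avoids $\OUp{t'}_T$ by construction, and $F$ avoids $\OUp{t'}_T$ because $\OUp{z}_T \supseteq \OUp{t'}_T$ (as $z <_T t'$). Hence $y \in N^*_{t'}$.

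To finish, suppose for contradiction that some $s \in \Down{t'}_T$ fails to lie in $N^*_{t'}$. Starting from $z_0 := t' \in N^*_{t'}$, I iteratively apply the lifting lemma to produce a strictly $\leq_T$-decreasing sequence $z_0 >_T z_1 >_T z_2 >_T \cdots$ in $N^*_{t'}$ with every $z_n \geq_T s$. At every stage $s \neq z_n$, for otherwise $s \in N^*_{t'}$, so the lemma extends the sequence by one step. This yields an infinite strictly descending chain in the well-ordered set $\Down{t'}_T$, a contradiction. Hence $s \in N^*_{t'}$.

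The main obstacle is verifying the containment $D \cup F \subseteq \Up{y}_T \setminus \OUp{t'}_T$ cleanly; this is where the chain of inequalities $y \leq_T z <_T t'$ interacts with both an $\Up{\cdot}_T$-constraint and an $\OUp{\cdot}_T$-constraint simultaneously, and where the $T$-connectoid condition (applied at the intermediate level $z$, not at $t'$) enters essentially. Once that is in place, the remaining bookkeeping is routine, relying only on the well-orderedness of $\Down{t'}_T$.
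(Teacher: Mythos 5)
Your argument is correct: the gluing step ($D \cup F \subseteq \Up{y}_T \setminus \OUp{t'}_T$, using $\Up{z}_T \subseteq \Up{y}_T$ and $\OUp{t'}_T \subseteq \OUp{z}_T$) is exactly the transitivity the paper also relies on, and cofinality of the $N$-sets together with well-orderedness of $\Down{t'}_T$ grounds the recursion. The paper packages the same ingredients as a transfinite induction on the level of $t'$ with $t$ fixed (extending the witness upward through some $t^* \in N_{t'}$), whereas you fix $t'$ and run a well-founded descent on the lower endpoint, applying the $T$-connectoid condition at the intermediate nodes $z$; the two are mirror images and both are valid.
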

\begin{proof}
	Let $t \in T$ be arbitrary and let $\alpha$ be the ordinal with $t \in T^{\alpha}$.
	We prove by transfinite induction on $\beta > \alpha$ that the statement is true for every $t' \in T^{\beta} \cap \Up{t}_T$.
	
	We show the induction step for some $\beta > \alpha$ and an arbitrary $t' \in T^{\beta} \cap \Up{t}_T$.
	Since $(S, \cC)$ is a $T$-connectoid, $N_{t'}$ is cofinal in $\ODown{t'}_T$.
	If $\beta$ is a successor, the parent $t^*$ of $t'$ in $T$ is contained in $N_{t'}$.
	If $\beta$ is a limit, there is $t^*$ with $t \leq_T t^* <_T t'$ such that $t^* \in N_{t'}$.

    In both cases, $t^*$ and $t'$ are contained in the same component in $\cK(\Up{t^*}_T \setminus \OUp{t'}_T)$.
    By induction hypothesis, $t^*$ and $t$ are contained in the same component in $\cK(\Up{t}_T \setminus \OUp{t^*}_T)$.
    Then $t'$ and $t$ are contained in the same component in $\cK(\Up{t}_T \setminus \OUp{t'})$ as desired.
\end{proof}
\noindent
Note that \cref{lem:interval} implies that $N_t = \ODown{t}_T$ for every $t \in T$.

\begin{prop}\label{prop:construct_star}
	Let $T$ be an order tree and let $(S, \cC)$ be some $T$-connectoid.
	For every limit $t \in T$, there exists a family $(H_n)_{n \in \NN}$ of finite connected sets with $t \in H_n$, $H_n \cap \ODown{t}_T \neq \emptyset$ and $H_n \cap H_m \subseteq \bigcap_{u \in \ODown{t}_T} \Up{u}_T$ for every $n \neq m \in \NN$.
\end{prop}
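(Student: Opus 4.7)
The plan is to construct the $H_n$ one at a time, taking $H_n = F_{s_n}$ for a carefully chosen cofinal sequence $s_n$ in $\ODown{t}_T$, where each $F_s$ is a finite connecting set provided by the $T$-connectoid structure. Denote $\Lambda := \bigcap_{u \in \ODown{t}_T} \Up{u}_T$, and observe that $t \in \Lambda$ (since $t \geq_T u$ for every $u \in \ODown{t}_T$). Preparation: for every $s \in \ODown{t}_T$, the remark after \cref{lem:interval} says that $N_t = \ODown{t}_T$, so $s$ and $t$ lie in the same component of $\cK(\Up{s}_T \setminus \OUp{t}_T)$. That component is connected, so by the definition of connectedness I can pick a finite connected set $F_s \subseteq \Up{s}_T \setminus \OUp{t}_T$ with $s, t \in F_s$. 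Note that $F_s \subseteq \Up{s}_T$ already forces any $x \in F_s$ to satisfy $x \geq_T s$.

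Now I recurse. For $n = 1$ I pick any $s_1 \in \ODown{t}_T$ (which is nonempty because $t$ is a limit) and set $H_1 := F_{s_1}$. Given $H_1, \dots, H_n$, for each $x \in \bigcup_{i \leq n} H_i$ with $x \notin \Lambda$ I fix some $u_x \in \ODown{t}_T$ with $u_x \not\leq_T x$ (such a $u_x$ exists by definition of $\Lambda$). The resulting finite set $A := \{u_x\}$ lies in $\ODown{t}_T$, which is a well-ordered chain with no maximum (since $t$ is a limit), so $A$ has a maximum $u^*$ and I can choose $s_{n+1} \in \ODown{t}_T$ with $s_{n+1} \geq_T u^*$. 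Set $H_{n+1} := F_{s_{n+1}}$.

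I then verify the three required properties. The first two are immediate: $t \in F_{s_n} = H_n$ by construction, and $s_n \in H_n \cap \ODown{t}_T$, so $H_n \cap \ODown{t}_T \neq \emptyset$. For the cross-intersection property, suppose $x \in H_i \cap H_n$ with $i < n$. Then $x \in H_n \subseteq \Up{s_n}_T$, so $x \geq_T s_n$. If $x \notin \Lambda$, then when $s_n$ was constructed I had already assigned $u_x \in \ODown{t}_T$ with $u_x \not\leq_T x$, and I arranged that $s_n \geq_T u_x$; this gives $x \geq_T s_n \geq_T u_x$, contradicting $u_x \not\leq_T x$. Hence $x \in \Lambda$, as required.

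The only real obstacle is the blocking argument of the third paragraph, and it is clean once one notices two things: (a) any element of $F_s$ is automatically $\geq_T s$, so choosing $s_{n+1}$ large enough in the chain $\ODown{t}_T$ indeed \emph{excludes} from $H_{n+1}$ every previously encountered element that fails to lie in $\Lambda$; and (b) this choice is always possible because $\ODown{t}_T$ is a well-ordered chain of limit order type, so any finite subset admits a strict upper bound inside $\ODown{t}_T$. Everything else (existence of $F_s$, nonemptiness, $t \in H_n$) follows directly from the $T$-connectoid hypothesis via \cref{lem:interval}.
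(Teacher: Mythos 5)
Your proof is correct and takes essentially the same route as the paper's: both recursively pick $s_{n+1}$ cofinally in $\ODown{t}_T$ so that $\Up{s_{n+1}}_T$ meets the finitely many previously used elements only inside $\bigcap_{u \in \ODown{t}_T}\Up{u}_T$, and then invoke \cref{lem:interval} to obtain a finite connected set containing $s_{n+1}$ and $t$ inside $\Up{s_{n+1}}_T$. Your version merely spells out (via the elements $u_x$) the step the paper leaves implicit, namely why a suitable $s_{n+1}$ exists.
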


\begin{proof}
	Let $t$ be some limit in $T$.
	We construct the desired family $(H_n)_{n \in \NN}$ recursively.
	Let $m \in \NN$ such that $(H_n)_{n \leq m}$ has been constructed.
	Since each element of $(H_n)_{n \leq m}$ is finite, there exists $s \in \ODown{t}_T$ such that $ \Up{s}_T \cap \bigcup_{n \leq m} H_n \subseteq \bigcap_{u \in \ODown{t}_T} \Up{u}_T$.
	
	By \cref{lem:interval}, there exist a finite connected set $H_{m +1} \subseteq \Up{s}_T$ containing $s$ and $t$.
	Then $H_n \cap H_{m +1} \subseteq \bigcap_{u \in \ODown{t}_T} \Up{u}_T$ for every $n \leq m$.
	This finishes the construction of $(H_n)_{n \in \NN}$.
\end{proof}

Now we turn our attention to the definition of normal partition trees.
Let $(S, \cC)$ be a connectoid and let $\mathcal{P} := \{P_t: t \in T \}$ be a partition of $S$ into elements of $\cC$.
If the tuple $(T, \leq_T)$ is an order tree, we call $T$ a \emph{partition tree} of $(S, \cC)$.
Further, we consider the minor $(\mathcal{P}, \cC_{\mathcal{P}})$ obtained by contracting the sets in $\mathcal{P}$ and call the partition tree $T$ \emph{normal} if
\begin{itemize}
	\item $(\mathcal{P}, \cC_{\mathcal{P}})$ is a $T$-connectoid,
	\item for each $t \in T$ we either have $|P_t| \leq \cf(\height(t))$ or $|P_t| < \omega$.
\end{itemize}
\noindent
Note that the properties of \cref{lem:interval} apply to $(\mathcal{P},  \cC_{\mathcal{P}})$ and thus extend to normal partition trees:
\begin{cor}
	Let $T$ be a normal partition tree of a connectoid $(S, \cC)$ with partition $\{P_t: t \in T \}$.
	Then for every two elements $t <_T t'\in T$, the sets $P_t$ and $P_{t'}$ are contained in the same component in $\cK(\bigcup_{s \in \Up{t}_T \setminus \OUp{t'}_T}P_s)$. 
\end{cor}
\begin{lemma}\label{thm:existence_normal_partition_tree}
	Every connected connectoid has a normal partition tree.
\end{lemma}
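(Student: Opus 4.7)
The plan is to construct the order tree $T$ and the partition $\mathcal{P}$ simultaneously by transfinite recursion on ordinal levels $\alpha$. The invariant maintained after stage $\alpha$ is that I have a down-closed order subtree $T_{<\alpha}$ of height at most $\alpha$ and a partition $(P_t)_{t \in T_{<\alpha}}$ of some $S_{<\alpha} \subseteq S$ into nonempty connected sets satisfying the normal-partition-tree axioms restricted to $T_{<\alpha}$, together with a canonical bijection between the components of $\cK(S \setminus S_{<\alpha})$ and the \emph{live} maximal chains $B$ of $T_{<\alpha}$, each component having $B$ as its neighborhood in the sense of \cref{prop:neighbourhood}. Initially I pick an arbitrary $s_0 \in S$, set the root $r$ with $P_r := \{s_0\}$, and declare $\{r\}$ to be the branch of every component of $\cK(S \setminus \{s_0\})$.

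At a successor stage $\alpha = \beta + 1$, for each live branch $B$ with top node $t^*$ and corresponding component $K$ I pick an element $s \in K$ adjacent to $P_{t^*}$ (available because $K \cup P_{t^*}$ contains a finite connected set meeting both $K$ and $P_{t^*}$) and add a new child $t$ of $t^*$ in $T$ with $P_t := \{s\}$; the size condition holds trivially since $|P_t| < \omega$, and the components of $\cK(K \setminus \{s\})$ become the new live branches $B \cup \{t\}$. At a limit stage $\alpha$, for each live branch $B = (t_\beta)_{\beta < \alpha}$ cofinal in $\alpha$ I examine the descending sequence $(K_\beta)_{\beta < \alpha}$ of components indexed by $B$: if $\bigcap_{\beta < \alpha} K_\beta = \emptyset$ the branch terminates, and otherwise I pick $s \in \bigcap_{\beta < \alpha} K_\beta$, fix a cofinal sequence $(\beta_\gamma)_{\gamma < \cf(\alpha)}$ in $\alpha$, choose for each $\gamma$ a finite connected $C_\gamma \subseteq K_{\beta_\gamma}$ containing $s$ and meeting some $P_{t_\beta}$ with $\beta \geq \beta_\gamma$, and set $P_t := \{s\} \cup \bigcup_\gamma (C_\gamma \setminus S_{<\alpha})$. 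Then $|P_t| \leq \cf(\alpha) \cdot \aleph_0 = \cf(\alpha)$, the set $P_t$ is connected through the common point $s$, and the sets $C_\gamma$ witness that $t_{\beta_\gamma}$ lies in $N_t$ for each $\gamma$, yielding the cofinality of $N_t$ in $\ODown{t}_T$. The recursion terminates at some stage $\alpha^* \leq |S|^+$, when $S_{<\alpha^*} = S$.

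The hard part will be the limit stage. First, I must verify that after deleting the already-partitioned points of $S_{<\alpha}$ from $C_\gamma$, the resulting set $P_t$ is still connected in the sense required for $(\mathcal{P}, \cC_\mathcal{P})$: the missing pieces of $C_\gamma$ live in earlier branch-$B$ classes $P_{t_\beta}$, so in the contracted connectoid the set $C_\gamma$ becomes a finite connected set containing the new class $P_t$ and the corresponding $P_{t_\beta}$, which is exactly what is needed for $t_{\beta_\gamma} \in N_t$. Second, I must maintain the invariant that each new component of $\cK(S \setminus S_{<\alpha+1})$ has a chain-shaped neighborhood, which follows inductively since new components arise by splitting an existing $K$ along the newly inserted node. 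Finally, the global normal-spanning-order-tree axioms for the completed $(T, \mathcal{P})$, namely the common-ancestor property for incomparable classes in a connected set and the witnessing connected set for comparable classes, follow from the stagewise construction together with the argument used for \cref{lem:interval} once $(\mathcal{P}, \cC_\mathcal{P})$ has been checked to be a $T$-connectoid.
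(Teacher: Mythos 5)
Your overall strategy --- building the order tree level by level, rather than element by element along a well-order of $S$ as the paper does --- is a genuinely different organisation of what is at heart the same construction, and your limit-level idea (pick $s$ in the intersection of the components along a cofinal branch, assemble $P_t$ from finite connected sets reaching cofinally far down, bound $|P_t|$ by $\cf(\alpha)$) matches the paper's treatment of the case where the chain $M_K$ has no maximal element. However, as written there are concrete gaps at both kinds of stage. At successor stages you set $P_t:=\{s\}$ for an arbitrary $s\in K$ joined to $P_{t^*}$ by some finite connected set $C$. That $C$ may pass through $K\setminus\{s\}$, and everything in $K\setminus\{s\}$ ends up strictly above $t$ in the final tree, so $C$ does not witness $t^*\in N_t$ and no witness need exist. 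Take the undirected path $a$--$b$--$c$ with $P_{t^*}=\{a\}$, $K=\{b,c\}$ and $s=c$: every connected set containing $a$ and $c$ contains $b$, which becomes a class above $t$, so $N_t=\emptyset$ and the $T$-connectoid condition fails. This is precisely why the paper puts $H\cap K$ into $P_t$ and then closes it off inside $K$ via \cref{prop:connected set_closure}, rather than taking a bare singleton; the same repair is needed at your limit step, where $\{s\}\cup\bigcup_\gamma(C_\gamma\setminus S_{<\alpha})$ need not be connected (deleting $C_\gamma\cap S_{<\alpha}$ can disconnect $C_\gamma$ from $s$) and need not lie in a single component of $\cK(S\setminus S_{<\alpha})$.

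The more serious problem is the claimed bijection between components and live maximal chains, and with it your termination claim $S_{<\alpha^*}=S$. At a limit stage several components of $\cK(S\setminus S_{<\alpha})$ can have the \emph{same} cofinal branch $B$ as neighbourhood while lying in different components of $\cK\bigl(S\setminus\bigcup_{u\in B}P_u\bigr)$: take a ray $v_0v_1v_2\dots$ whose vertices become the classes along $B$, together with two disjoint rays $R,R'$ attached to every $v_n$; at stage $\omega$ both $R$ and $R'$ have neighbourhood $B$. Your rule creates only one new node on top of $B$, with $P_t\subseteq R$ say; afterwards $R'$ still has the no-longer-maximal chain $B$ as its neighbourhood, is never matched to a live branch again, and its elements never enter the partition, so the tree is not spanning. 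The paper avoids this by allowing several incomparable limit nodes on top of the same chain $M_K$ (one per component) and, crucially, by using the enumeration $\{s_\alpha:\alpha<\sigma\}$ as bookkeeping that forces every element of $S$ into the partition --- a device your construction lacks entirely. Finally, the incomparability axiom is deferred to ``the stagewise construction'', but what actually makes it go through is the identity $K=K_t$ for each newly added node, which the paper proves and you would still need to establish.
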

\noindent
The proof of \cref{thm:existence_normal_partition_tree} follows the lines of the proof of its graph-theoretic counterpart \cite{brochet1994normal}*{Theorem~4.2}.
We use the following basic property:
\begin{prop}\label{prop:connected set_closure}
	Let $(S, \cC)$ be a connected connectoid, let $C$ be a connected set and let $X \subseteq C$.
	Then there is a connected set $C' \subseteq C$ containing $X$ with $|X| = |C'|$ or $|C'| < \omega$.
\end{prop}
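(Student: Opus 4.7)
The plan is to construct $C'$ explicitly as a union of finite connected sets, using the definition of a connected set in a connectoid together with property (i) (union closure when two members of $\mathcal{F}$ intersect).

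If $X = \emptyset$ the claim is trivial, so fix some element $x_0 \in X$. For each $x \in X$ the set $C$ is connected, so there is some $F_x \in \mathcal{F}$ with $F_x \subseteq C$ and $\{x_0,x\} \subseteq F_x$. Define
\[
C' := \bigcup_{x \in X} F_x.
\]
Clearly $X \subseteq C' \subseteq C$. For the cardinality, each $F_x$ is finite by definition of $\mathcal{F}$; if $X$ is infinite then $|C'| \leq |X|\cdot \aleph_0 = |X|$, while $|X| \leq |C'|$ since $X \subseteq C'$, so $|C'| = |X|$. If $X$ is finite, then $C'$ is a finite union of finite sets and hence $|C'| < \omega$.

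It remains to verify that $C'$ is a connected set of $(S,\cC)$. Given any two elements $y,z \in C'$, pick indices $x,x' \in X$ with $y \in F_x$ and $z \in F_{x'}$. Since $x_0 \in F_x \cap F_{x'}$, property (i) of $\mathcal{F}$ yields $F_x \cup F_{x'} \in \mathcal{F}$, and this finite connected set is contained in $C'$ and contains both $y$ and $z$. Therefore $C'$ is connected by the definition of connectedness in a connectoid.

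No step looks like a serious obstacle here; the only small subtlety is that one has to form $C'$ through a common ``hub'' $x_0$ rather than taking arbitrary connectors $F_{x,y}$ for all pairs, since that would both blow up cardinality unnecessarily and make the verification of connectedness less transparent. Using the single vertex $x_0$ lets property (i) of $\mathcal{F}$ be applied directly to two sets at a time.
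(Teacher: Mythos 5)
Your proof is correct and follows essentially the same route as the paper: fix a hub element of $X$, join every other element of $X$ to it by a finite connected subset of $C$, and take the union. The paper states the connectedness of the union without elaboration; your explicit verification via the union-closure property of $\mathcal{F}$ fills in exactly the intended reasoning.
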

\begin{proof}
	Fix some $x \in X$ and consider for every $y \in X$ a finite connected set $C_y \subseteq C$ containing $x$ and $y$. Then $C':= \bigcup_{y \in X} C_y$ is a connected set.
	By construction, either $C'$ is finite or $|C'| = |X|$.
\end{proof}

Let $(S, \cC)$ be a connectoid and let $T$ be a partition tree with a partition $\{P_t: t \in T\}$ of $S$ into elements of $\cC$.
Given a set $U \subseteq V(T)$, we set $P(U):= \bigcup_{u \in U} P_u$ and define $P(T'):= P(V(T'))$ for every subtree $T'$ of $T$.
For $t \in T$, let $K_t$ be the unique component in $\cK(S \setminus P(\ODown{t}_{T}))$ containing $P_t$.

\begin{proof}[Proof of \cref{thm:existence_normal_partition_tree}]
	Let $(S, \cC)$ be a connected connectoid and fix a well-order of $S = \Set{s_\alpha: \alpha < \sigma}$. 
	We will construct a continuous increasing sequence of rooted order trees $T_\alpha$ and a family $(P_t)_{t \in \bigcup_{\alpha \leq \sigma} T_\alpha}$ of disjoint, nonempty connected sets satisfying for every $\alpha \leq \sigma$ the following properties:
	\begin{enumerate}[label={(\alph*)}]
		\item\label{itm:npt1} for every $t, t' \in T_\alpha$, every connected set containing elements of $P_t$ and $P_{t'}$  contains an element of $P(\Down{t}_{T_\alpha} \cap \Down{t'}_{T_\alpha})$,
		\item\label{itm:npt2} for every $t \in T_\alpha$ the set of $s \in \ODown{t}_{T_\alpha}$ that have the property that $P_s$ and $P_t$ are contained in the same component in $\cK((K_s \setminus K_t) \cup P_t)$ is cofinal in $\ODown{t}_{T_\alpha}$,
		\item\label{itm:npt3} for every $t \in T_\alpha$ we either have $|P_t| \leq \cf(\height(t))$ or $|P_t| < \omega$, and
		\item\label{itm:npt4} $\{s_\beta: \beta < \alpha \} \subseteq P(T_\alpha)$.
	\end{enumerate}
    
    We begin by showing that $T:= T_\sigma$ together with $\mathcal{P}:= (P_t)_{t \in T}$ is the desired normal partition tree of $(S, \cC)$.
    By \labelcref{itm:npt4}, $\mathcal{P}$ is a partition of $S$ and therefore $T$ is a partition tree. Further, \labelcref{itm:npt3} ensures that the second condition of normal partition tree is true.
    For the first condition of normal partition trees we have to prove that the minor $(\mathcal{P}, \cC_{\mathcal{P}})$ of $(S, \cC)$ obtained by contracting $\mathcal{P}$ is a $T$-connectoid.
    
    Similarly as in the proof of \cref{lem:interval} one can show by transfinite induction that for every two elements $t \leq_T t' \in T$ the sets $P_t$ and $P_{t'}$ are contained in the same component in $\cK((K_t \setminus K_{t'} ) \cup P_{t'})$ by $\labelcref{itm:npt2}$.
    Then together with \labelcref{itm:npt1} we can deduce that $T$ is a normal spanning order tree of the minor $(\mathcal{P}, \cC_{\mathcal{P}})$.
    Furthermore, $N_t$ is cofinal in $\ODown{t}_T$ for every $t \in T$ by \labelcref{itm:npt2}.
    Thus $(\mathcal{P}, \cC_{\mathcal{P}})$ is indeed a $T$-connectoid.

    It remains to construct the sequence $(T_\alpha)_{\alpha \leq \sigma}$ and the family $(P_t)_{t \in T}$.
	Set $T_0:= \emptyset$.
    For a limit $\alpha \leq \sigma$ we set $T_\alpha:= \bigcup_{\beta < \alpha} T_{\beta}$. Note that if every $T_\beta$ with $\beta < \alpha$ satisfies \labelcref{itm:npt1,itm:npt2,itm:npt3,itm:npt4}, then also $T_\alpha$ satisfies \labelcref{itm:npt1,itm:npt2,itm:npt3,itm:npt4}.
    For the successor step we consider $\alpha \leq \sigma$ such that $T_\alpha$ satisfies \labelcref{itm:npt1,itm:npt2,itm:npt3,itm:npt4}.
    If $s_\alpha \in P(T_\alpha)$, then $T_{\alpha + 1}:= T_\alpha$ satisfies \labelcref{itm:npt1,itm:npt2,itm:npt3,itm:npt4}.
    
    Otherwise, let $K$ be the component in $\cK(S \setminus P(T_\alpha))$ containing $s_\alpha$.
    We consider the set $M_K := \Set{ s \in  V(T_\alpha) : K \subseteq K_s }$.
    Clearly, the set $M_K$ is $\leq_{T_\alpha}$-down-closed.
    Further, $M_K$ is a chain in $T_\alpha$:
    Suppose for a contradiction that there are $\leq_{T_\alpha}$-incomparable elements $s, s'$ such that $K \subseteq K_s, K_{s'}$.
    Then $K_s \cap K_{s'} \neq \emptyset$ and therefore there is a connected set $C \subseteq K_s \cup K_{s'}$ containing $P_s$ and $P_{s'}$. This contradicts \labelcref{itm:npt1} since $(K_s \cup K_{s'}) \cap P(\ODown{s}_{T_\alpha} \cap \ODown{s'}_{T_\alpha}) = \emptyset$.
    
	If $M_K$ has a maximal element $m$, let $T_{\alpha + 1} $ be the order tree obtained from $T_\alpha$ by placing a vertex $t$ on top of $m$. Pick a finite connected set $H \subseteq K_m$ that contains $s_\alpha$ and an element of $P_m$. Let $P_{t} \subseteq K$ be a finite connected set containing $H \cap K$, which exists by \cref{prop:connected set_closure}.
	
	If $M_K$ has no maximal element, let $T_{\alpha + 1} $ be the order tree obtained from $T_\alpha$ by placing a vertex $t$ on top of the chain $M_K$. Take a cofinal subset $M_K' \subseteq M_K$ such that $|M_K'| = \cf(\height(t))$.
    Pick for each element $m \in M_K'$ a finite connected set $H_m \subseteq K_m$ that contains $s_\alpha$ and an element of $P_m$.
    Let $P_t$ be a connected set in $K$ containing $K \cap \bigcup_{m \in M_K'} H_m$ of size at most $\cf(\height(t))$, which exists by \cref{prop:connected set_closure}.
	
	By construction, $T_{\alpha + 1}$ satisfies \labelcref{itm:npt3,itm:npt4}. Next, we prove $K = K_t$. Note that $P(\ODown{t}_{T_{\alpha+1}})$ is a subset of $P(T_\alpha)$. Since $K_t$ is a component in $\cK(S \setminus P(\ODown{t}_{T_{\alpha+1}}))$ and $K$ a component in $\cK(S \setminus P(T_{\alpha+1}))$, $K \subseteq K_t$ holds.
    Suppose for a contradiction that $K_t \setminus K$ is nonempty.
    Then $K_t$ contains an element of some $P_s$ with $s \in V(T_\alpha) \setminus \ODown{t}_{T_{\alpha + 1}}$.
	Let $s \in V(T_\alpha) \setminus \ODown{t}_{T_{\alpha + 1}}$ be $\leq_T$-minimal with this property.
    Then $K_t$ avoids $P(\ODown{s}_{T_\alpha})$ and contains an element of $P_s$.
    Thus $K_t \subseteq K_s$, which implies that $s \in M_K$.
    This gives a contradiction since $s \notin \ODown{t}_{T_{\alpha+1}} = M_K$.
	
	This implies that every connected set containing an element of $P_t$ and an element of $P(T_\alpha)$ intersects $P(\ODown{t}_{T_{\alpha + 1}})$. 
    We can deduce that \labelcref{itm:npt1} holds for $T_{\alpha + 1}$ since \labelcref{itm:npt1} holds for $T_{\alpha}$.
	
	By construction, all elements of $M_K'$ have the property that $P_s, P_t$ are contained in a common component in $\cK((K_s \setminus K) \cup P_t) = \cK((K_s \setminus K_t) \cup P_t)$ witnessed either by the connected set $H$ or by the connected sets $H_m$ for $m \in M_K'$.
	Thus a cofinal subset of $\ODown{t}_{T_{\alpha + 1}}$ has this property, which implies that \labelcref{itm:npt2} holds for $T_{\alpha + 1}$ and completes the proof.
\end{proof}

\section{Countable separation number} \label{sec:countable_separation_number}
In this section we discuss basic properties of connectoids with countable separation number and introduce two classes of connectoids without countable separation number.

We recall the definition of countable separation number.
A connected connectoid $(S, \cC)$ has \emph{countable separation number} if there exists a well-order $\leq$ of $S$ such that for every $s \in S$ there is a finite set $X \subseteq \ODown{s}_\leq$ with the property that the component in $\cK(S \setminus X)$ containing $s$ avoids $\ODown{s}_\leq$.
A partial-order $\leq$ is \emph{well-founded} if there is no infinite $\leq$-descending sequence.
We present the following equivalence:
\begin{lemma}\label{lem:countable_separtion_number_equivalence}
	Let $(S, \cC)$ be a connected connectoid. The following properties are equivalent
	\begin{enumerate}[label=(\arabic*)]
		\item\label{itm:csn_equiv_1} $(S, \cC)$ has countable separation number as witnessed by a well-order of order type $|S|$,
		\item\label{itm:csn_equiv_2} $(S, \cC)$ has countable separation number, and
		\item\label{itm:csn_equiv_3} there exists a well-founded partial order $\preceq$ of $S$ such that for every $s \in S$
		\begin{enumerate}[label=(\roman*)]
			\item\label{itm:csn_equiv_3i} $\Down{s}_\preceq$ is finite,
			\item\label{itm:csn_equiv_3ii} the component in $\cK(S \setminus \ODown{s}_\preceq)$ containing $s$ is subset of $\Up{s}_\preceq$.
		\end{enumerate}
	\end{enumerate}
\end{lemma}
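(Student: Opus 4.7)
I plan to establish the cycle \labelcref{itm:csn_equiv_1} $\Rightarrow$ \labelcref{itm:csn_equiv_2} $\Rightarrow$ \labelcref{itm:csn_equiv_3} $\Rightarrow$ \labelcref{itm:csn_equiv_1}. The implication \labelcref{itm:csn_equiv_1} $\Rightarrow$ \labelcref{itm:csn_equiv_2} is immediate, since the witness for the former is a witness for the latter.

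For \labelcref{itm:csn_equiv_3} $\Rightarrow$ \labelcref{itm:csn_equiv_1}, the finiteness of each $\Down{s}_\preceq$ yields a natural-number rank $\rho(s) := \sup\{\rho(t) + 1 : t \prec s\}$ (with the convention $\sup \emptyset = 0$). I would build a well-order $\leq$ of order type $|S|$ respecting $\preceq$ (in the sense that $t \prec s$ implies $t < s$) as follows: in the uncountable case, enumerate the rank classes $\rho^{-1}(n)$ in increasing order of $n$ with any well-order of type $|\rho^{-1}(n)|$ on each class, giving total order type at most $\omega \cdot |S| = |S|$; in the countable case, fix a bijection $f\colon \omega \to S$ and at stage $n$ append the finite set $\Down{f(n)}_\preceq$ (minus already placed elements) in a $\preceq$-extending order, giving order type $\omega = |S|$. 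Then $\ODown{s}_\preceq \subseteq \ODown{s}_\leq$ is finite, and by \labelcref{itm:csn_equiv_3ii} the component of $s$ in $S \setminus \ODown{s}_\preceq$ lies in $\Up{s}_\preceq$, whose elements other than $s$ have strictly greater rank and therefore lie in $\OUp{s}_\leq$.

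The main obstacle is \labelcref{itm:csn_equiv_2} $\Rightarrow$ \labelcref{itm:csn_equiv_3}. Given $\leq$ and, for each $s \in S$, a finite set $X_s \subseteq \ODown{s}_\leq$ whose removal leaves $s$ in a component avoiding $\ODown{s}_\leq$, I would define by transfinite recursion on $\leq$
\[
\Down{s}_\preceq := \{s\} \cup \bigcup_{t \in X_s} \Down{t}_\preceq.
\]
A routine induction shows that $\preceq$ is a well-founded partial order with finite down-closures, that $X_s \subseteq \ODown{s}_\preceq \subseteq \ODown{s}_\leq$, and hence that the component of $s$ in $S \setminus \ODown{s}_\preceq$ is contained in the component in $S \setminus X_s$ and in particular avoids $\ODown{s}_\leq$. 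The delicate step is \labelcref{itm:csn_equiv_3ii}, which I plan to establish by transfinite induction on $u$ along $\leq$ via the statement: for every $s < u$ in $\leq$ such that $u$ lies in the component of $s$ in $S \setminus \ODown{s}_\preceq$, one has $s \in \Down{u}_\preceq$. A connected set $C \ni s, u$ avoiding $\ODown{s}_\preceq$ must meet $X_u$, for otherwise $C \subseteq S \setminus X_u$ would place $s$ into the component of $u$ in $S \setminus X_u$, contradicting that this component avoids $\ODown{u}_\leq \ni s$. Picking $t \in C \cap X_u$: if $t = s$ then $s \in X_u \subseteq \ODown{u}_\preceq$ directly; otherwise $C \subseteq S \setminus X_s$ forces $t > s$ (since $C$ lies in the component of $s$ in $S \setminus X_s$, which avoids $\ODown{s}_\leq$), giving $s < t < u$, and the induction hypothesis applied to $t$ yields $s \in \Down{t}_\preceq \subseteq \Down{u}_\preceq$ by the recursive definition. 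The case $u \leq s$ is vacuous as the component of $s$ avoids $\ODown{s}_\leq$.
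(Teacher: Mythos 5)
Your overall strategy coincides with the paper's: (1)$\Rightarrow$(2) is trivial, (2)$\Rightarrow$(3) takes $\preceq$ to be the reflexive--transitive closure of the relation ``$t \in X_s$'', and (3)$\Rightarrow$(1) extends $\preceq$ to a well-order of order type $|S|$. Your two local variations are both fine and arguably cleaner: defining $\Down{s}_\preceq$ by transfinite recursion along $\leq$ makes its finiteness an immediate induction (the paper instead builds an auxiliary locally finite tree and invokes K\H{o}nig's Infinity Lemma), and your proof of (3ii) by induction along $\leq$ is a reorganisation of the paper's ``take a $\preceq$-minimal counterexample in $K \setminus \Up{s}_\preceq$'' argument. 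I checked the key steps -- that a connected $C \ni s,u$ inside the component of $s$ in $\cK(S \setminus \ODown{s}_\preceq)$ must meet $X_u$, that the chosen $t \in C \cap X_u$ satisfies $s < t < u$, and that the induction hypothesis applies to $t$ -- and they are correct.

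The one genuine flaw is in (3)$\Rightarrow$(1), uncountable case. Concatenating the rank classes $\rho^{-1}(0), \rho^{-1}(1), \dots$ produces a well-order of order type $\sum_{n<\omega} \lvert\rho^{-1}(n)\rvert$, which can strictly exceed $|S|$: if two rank classes both have size $|S|$, the concatenation already has order type at least $|S| + |S| > |S|$. The identity you invoke, $\omega \cdot |S| = |S|$, is true, but the relevant ordinal here is $|S| \cdot \omega$ ($\omega$ blocks each of length up to $|S|$), which is strictly larger than $|S|$. So as written you only obtain a witness for (2), not for (1), and the order type matters: the paper later uses the type-$|S|$ witness in the decomposition lemma. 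The repair is simply to run your countable-case construction for arbitrary $|S|$: fix a bijection $f\colon |S| \to S$ and at stage $\alpha$ append $\Down{f(\alpha)}_\preceq$ minus the already placed elements, in a $\preceq$-respecting order. Each stage adds only finitely many new elements, so all of $S$ is placed within $|S|$ stages and the resulting well-order has order type exactly $|S|$ while still extending $\preceq$. (The paper merely asserts that such a well-order exists, so supplying this argument would actually make your write-up more complete than the original.)
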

\begin{proof}
	\begin{description}
		\item[\labelcref{itm:csn_equiv_1} implies \labelcref{itm:csn_equiv_2}] Straightforward.
		\item[\labelcref{itm:csn_equiv_2} implies \labelcref{itm:csn_equiv_3}]
		Let $\leq$ be some well-order of $S$ witnessing countable separation number. Then for every $s \in S$ there exists a finite set $X_s \subseteq\ODown{s}_\leq$ such that the component in $\cK(S \setminus X_s)$ containing $s$ avoids $\ODown{s}_\leq$.
		We define a relation $\trianglelefteq$ with $u \trianglelefteq s$ if and only if $u \in X_s$. Let $\preceq$ be the reflexive and transitive closure of $\trianglelefteq$. Note that $\preceq$ respects $\leq$ by choice.
		
		Let $s \in S$ be an arbitrary element.
		First of all, we show that the set $\Down{s}_\preceq$ is finite. We construct a locally finite, rayless auxiliary tree $A$ covering all elements of $\Down{s}_\preceq$ recursively.
		Let $s$ be the root of $A$. In the $n$-th step we add the $n$-th distance class as follows: for every element $u$ in the $(n-1)$-st distance class we add all vertices of $X_u$ as a children of $u$. By construction, $A$ is locally finite. Furthermore, every branch of $A$ is $\leq$ descending. Since $\leq$ is a well-order, all branches of $A$ are finite. As $\preceq$ is the transitive closure of $\trianglelefteq$, all elements of $\Down{s}_\preceq$ are contained in $A$ and by K\H{o}nig's Infinity Lemma \cite{konig1927schlussweise}, $V(A) = \Down{s}_\preceq$ is finite. In particular, $\preceq$ is well-founded.
		
		Next, we show that the component $K$ in $\cK(S \setminus \ODown{s}_\preceq)$ containing $s$ is subset of $\Up{s}_\preceq$.
		Suppose for a contradiction that $K \setminus \Up{s}_{\preceq}$ is nonempty and let $k \in K \setminus \Up{s}_\preceq$ be $\preceq$-minimal. Note that $K \subseteq \Up{s}_{\leq}$ holds by the fact $X_s \subseteq \ODown{s}_\preceq$ and the choice of $X_s$. Thus $k > s$ holds. As $k$ and $s$ are contained in the connected set $K$, $X_k \cap K \neq \emptyset$. Note that $X_k \subseteq \ODown{k}_\preceq$ by the choice of $\preceq$.
		Thus $X_k \cap K \subseteq \ODown{k}_{\preceq} \cap K$ is nonempty. Furthermore, $\ODown{k}_\preceq \cap K \subseteq \Up{s}_\preceq$ holds by the choice of $k$.
		Thus $\ODown{k}_\preceq \cap \Up{s}_\preceq \neq \emptyset$ holds. This implies $k \in \Up{s}_\preceq$, contradicting the choice of $k$. Thus $K$ is indeed a subset of $\Up{s}_\preceq$.
		\item[\labelcref{itm:csn_equiv_3} implies \labelcref{itm:csn_equiv_1}]
		Let $\preceq$ be a well-founded partial order of $S$ fulfilling \labelcref{itm:csn_equiv_3i} and \labelcref{itm:csn_equiv_3ii}.
		By~\cref{prop:well-order_respecting}, there is a well-order $\leq$ of $S$ of order type $|S|$ respecting $\preceq$.
		Then $\leq$ witnesses countable separation number. \qedhere
	\end{description}
\end{proof}

\begin{prop} \label{prop:subconnectoid_closed}
	Let $(S, \cC)$ be a connectoid. If $(S, \cC)$ has countable separation number, then every subconnectoid of $(S, \cC)$ has countable separation number.
\end{prop}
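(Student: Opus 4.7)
The plan is to invoke the characterisation from Lemma~\ref{lem:countable_separtion_number_equivalence} and work with a witnessing well-founded partial order instead of a well-order, since restricting a well-founded partial order to a subset is much cleaner than restricting a well-order. Concretely, given that $(S, \cC)$ has countable separation number, I would pick a well-founded partial order $\preceq$ on $S$ satisfying conditions \labelcref{itm:csn_equiv_3i} and \labelcref{itm:csn_equiv_3ii} via \labelcref{itm:csn_equiv_2} $\Rightarrow$ \labelcref{itm:csn_equiv_3}, and then take $\preceq' := \preceq \cap (S' \times S')$ on $S'$.

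Verifying \labelcref{itm:csn_equiv_3i} for $\preceq'$ is immediate: well-foundedness is inherited by restrictions, and $\Down{s}_{\preceq'} \subseteq \Down{s}_\preceq$ is finite for every $s \in S'$. The main step is verifying \labelcref{itm:csn_equiv_3ii}. Fix $s \in S'$ and let $K'$ be the component, in the subconnectoid $(S', \cC')$, of $S' \setminus \ODown{s}_{\preceq'}$ containing $s$. For any $t \in K'$ there is a connected set $C \in \cC'$ with $s, t \in C$ and $C \cap \ODown{s}_{\preceq'} = \emptyset$. Since $C \subseteq S'$, the equality $\ODown{s}_{\preceq'} = \ODown{s}_\preceq \cap S'$ shows that $C$ is in fact disjoint from all of $\ODown{s}_\preceq$. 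Because $\cC' \subseteq \cC$, the set $C$ is also connected in $(S, \cC)$, so $C$ lies in the component of $\cK(S \setminus \ODown{s}_\preceq)$ containing $s$, which by assumption is contained in $\Up{s}_\preceq$. Hence $t \in \Up{s}_\preceq \cap S' = \Up{s}_{\preceq'}$, and therefore $K' \subseteq \Up{s}_{\preceq'}$ as required.

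Applying the direction \labelcref{itm:csn_equiv_3} $\Rightarrow$ \labelcref{itm:csn_equiv_1} of Lemma~\ref{lem:countable_separtion_number_equivalence} to $\preceq'$ then yields countable separation number for $(S', \cC')$. I do not expect a serious obstacle: the underlying idea is that connectivity can only shrink when passing to a subconnectoid, so any finite separator witnessing the property at $s$ in $(S, \cC)$ continues to separate in $(S', \cC')$. The only subtlety worth flagging is ensuring that a connected set in $\cC'$ avoiding $\ODown{s}_{\preceq'}$ automatically avoids the potentially larger set $\ODown{s}_\preceq$, which is immediate from its being a subset of $S'$.
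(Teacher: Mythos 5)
Your proof is correct, and the underlying idea is the same as the paper's: restrict the witnessing order to $S'$ and use the fact that every connected set of the subconnectoid is connected in $(S,\cC)$, so components can only shrink and any separator that works in $(S,\cC)$ still works in $(S',\cC')$. The only difference is cosmetic: the paper does not pass through the partial-order characterisation of \cref{lem:countable_separtion_number_equivalence} at all. It simply restricts the well-order $\leq$ itself to $S'$ and, for $s\in S'$ with witnessing finite set $X\subseteq\ODown{s}_\leq$, observes that $X\cap S'$ is a finite subset of $\ODown{s}_{\leq}\cap S'$ and that each component of $\cK'(S'\setminus(X\cap S'))$ lies inside a component of $\cK(S\setminus X)$. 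Your detour via a well-founded partial order $\preceq$ costs a little extra machinery (and forces you to use the full finite set $\ODown{s}_{\preceq'}$ as separator rather than an arbitrary finite $X$), but it buys nothing here is lost either: both verifications reduce to the same observation that a $\cC'$-connected set contained in $S'$ and avoiding $\ODown{s}_{\preceq}\cap S'$ automatically avoids $\ODown{s}_{\preceq}$, which you correctly flag as the one point needing care.
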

\begin{proof}
	Note that for every subconnectoid $(S', \cC')$ of $(S, \cC)$ and for every set $X \subseteq S$ each component in $\cK'(S' \setminus (X \cap S'))$ is subset of a component in $\cK(S \setminus X)$, where $\cK'$ is the family of components of $(S', \cC')$. 
	Thus every well-order witnessing countable separation number for $(S, \cC)$ also witnesses countable separation number for every subconnectoid of $(S, \cC)$.
\end{proof}

Let $(S, \cC)$ be a connectoid and $\hat{S} \subseteq S$ be some subset. Furthermore, let $\lambda$ be some cardinal. We call $\hat S$ \emph{$\lambda$-strong} if for every $X \subseteq S \setminus \hat S$ with $|X| \leq \lambda$ and every finite connected set $C$ there is a finite connected set $C'$ such that $C' \cap X = \emptyset$ and $C \cap \hat S = C' \cap \hat S$. Furthermore, we call $\hat S$ $(< \alpha)$-strong for some cardinal $\alpha$ if $\hat S$ is $\lambda$-strong for every cardinal $\lambda < \alpha$.

Note that for cardinals $\alpha < \beta$ every $(< \beta)$-strong and every $\beta$-strong subset is $\alpha$-strong and in particular $(< \alpha)$-strong.

\begin{prop}\label{prop:torso}
	Let $(S, \cC)$ be a connectoid with countable separation number. Then every torso of $(S, \cC)$ at a $(< \omega)$-strong subset has countable separation number.
\end{prop}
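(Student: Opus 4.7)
The plan is to take a well-order $\leq$ of $S$ witnessing countable separation number for $(S, \cC)$ and restrict it to $\hat S$ to obtain a well-order $\hat\leq$. For each $s \in \hat S$, if $X_s \subseteq \ODown{s}_\leq$ is the finite set whose deletion separates the component of $s$ from $\ODown{s}_\leq$ in $(S, \cC)$, then I propose to use $\hat X_s := X_s \cap \hat S$ as the separating set for $s$ in the torso. Clearly $\hat X_s$ is a finite subset of $\ODown{s}_{\hat\leq} = \hat S \cap \ODown{s}_\leq$, so it remains only to verify that the component of $s$ in $\cK_{\text{torso}}(\hat S \setminus \hat X_s)$ avoids $\ODown{s}_{\hat\leq}$.

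The verification I have in mind proceeds by contradiction. Suppose some $t \in \hat S \cap \ODown{s}_\leq$ lies in the torso-component of $s$ in $\cK_{\text{torso}}(\hat S \setminus \hat X_s)$. Unfolding the definition of the torso, there is a finite connected set $F \in \cC$ such that $F \cap \hat S$ is contained in that torso-component and contains both $s$ and $t$. From $F \cap \hat S \subseteq \hat S \setminus \hat X_s$ it follows that $F \cap \hat X_s = \emptyset$; however, $F$ may still meet the finitely many elements of $X_s \setminus \hat S$, and this is the single obstruction that must be removed.

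This is exactly where the $(<\omega)$-strongness of $\hat S$ enters. Applied to the finite set $X_s \setminus \hat S \subseteq S \setminus \hat S$ and to the finite connected set $F$, it yields a finite connected set $F' \in \cC$ with $F' \cap (X_s \setminus \hat S) = \emptyset$ and $F' \cap \hat S = F \cap \hat S$. Then $s, t \in F \cap \hat S \subseteq F'$, and moreover $F' \cap X_s = (F' \cap \hat X_s) \cup (F' \cap (X_s \setminus \hat S)) = (F \cap \hat X_s) \cup \emptyset = \emptyset$. Consequently $s$ and $t$ belong to the same component of $\cK(S \setminus X_s)$, namely the component of $s$, which by choice of $X_s$ avoids $\ODown{s}_\leq$. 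This contradicts $t \in \ODown{s}_\leq$ and completes the argument.

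The only real content lies in matching up what the two definitions of connectedness give us: $(<\omega)$-strongness is tailored precisely so that a connecting set in the torso can be lifted to a connecting set in $(S, \cC)$ that avoids a chosen finite subset of $S \setminus \hat S$. Once this observation is made, the rest is bookkeeping. I do not anticipate any genuine obstacle beyond keeping track of intersections with $\hat S$ and with $X_s$.
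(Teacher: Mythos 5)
Your proposal is correct and follows essentially the same route as the paper: restrict the witnessing well-order to $\hat S$, use $X_s\cap\hat S$ as the separating set, and apply $(<\omega)$-strongness to the finite set $X_s\setminus\hat S$ to lift a finite connecting set of the torso back to a connected set in $(S,\cC)$ avoiding all of $X_s$. The only cosmetic difference is that the paper first writes the torso-component as $C\cap\hat S$ for some $C\in\cC$ and then extracts a finite connected $C'\subseteq C$, whereas you invoke the finite witness directly, which amounts to the same thing.
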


\begin{proof}
	Let $\hat S$ be some $(< \omega)$-strong subset of $S$ and let $\leq$ be a well-order of $S$ witnessing countable separation number.
	We consider the torso $(\hat S, \hat \cC)$ and its family of components $\hat \cK$.
	Let $r \in \hat S$ be arbitrary.
	There is a finite set $X \subseteq S$ such that the component in $\cK(S \setminus X)$ containing $r$ does not contain an element of $\ODown{r}_\leq$.
	We show that the component $K \in \hat{\cK}(\hat S \setminus (X \cap \hat S))$ containing $r$ does not contain an element of $\ODown{r}_\leq \cap \hat S$.
	Then $\leq$ witnesses countable separation number for $(\hat S, \hat \cC)$.
	
	Suppose for a contradiction that $K \cap \hat S \cap \ODown{r}_\leq \neq \emptyset$.
	Let $C \in \cC$ be a connected set with $K = C \cap \hat S$.
	Note that $C \cap X \cap \hat{S} = \emptyset$.
	 Let $C' \subseteq C$ be a finite connected set in ${\cC}$ containing $r$ and some element of $\ODown{r}_\leq \cap \hat S$.
	Since $\hat S$ is $(< \omega)$-strong and as $X \setminus \hat S$ is finite, there is a connected set $C'' \in \cC$ with $C'' \cap (X \setminus \hat S) = \emptyset $ and $C' \cap \hat S = C'' \cap \hat S$.
	Thus $C''$ avoids $X$ and contains $r$ and some element of $\ODown{r}_{\leq}$, contradicting the choice of $X$.
\end{proof}

\begin{prop} \label{prop:contraction_closed}
	Let $(S, \cC)$ be a connectoid with countable separation number. Then every minor of $(S, \cC)$ has countable separation number.
\end{prop}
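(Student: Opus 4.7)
The plan is to reduce to the case of contractions. Every minor of $(S, \cC)$ arises by first passing to a subconnectoid $(S', \cC')$ and then contracting a partition $\mathcal{P}$ of $S'$ into elements of $\cC'$; since \cref{prop:subconnectoid_closed} says $(S', \cC')$ inherits countable separation number, it suffices to prove the following. If a connected connectoid $(S, \cC)$ has countable separation number and $\mathcal{P}$ is a partition of $S$ into elements of $\cC$, then the contraction $(\mathcal{P}, \cC_\mathcal{P})$ has countable separation number.

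To do this I would fix a well-order $\leq$ of $S$ witnessing countable separation number and, for each $P \in \mathcal{P}$, let $m(P)$ denote the $\leq$-minimum of $P$. Define $P \leq' P'$ by $m(P) \leq m(P')$; this is a well-order of $\mathcal{P}$, since distinct parts have distinct minima. I would then check that $\leq'$ witnesses countable separation number for the contraction. Given $P \in \mathcal{P}$, pick a finite $X \subseteq \ODown{m(P)}_\leq$ such that the component of $m(P)$ in $\cK(S \setminus X)$ avoids $\ODown{m(P)}_\leq$, and set $\mathcal{X} := \{P' \in \mathcal{P}: P' \cap X \neq \emptyset\}$. Then $\mathcal{X}$ is finite (with $|\mathcal{X}| \leq |X|$), and any $P' \in \mathcal{X}$ contains some $x < m(P)$, which forces $m(P') \leq x < m(P)$ and hence $P' <' P$, so $\mathcal{X} \subseteq \ODown{P}_{\leq'}$.

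The key verification is that no $P' \in \ODown{P}_{\leq'}$ lies in the component of $P$ in the contraction after removing $\mathcal{X}$. If such a $P'$ did lie there, the union of the parts in a witnessing connected subset of $\mathcal{P} \setminus \mathcal{X}$ would be a set $C \in \cC$ disjoint from $X$ (because no part meeting $X$ is used) and containing both $m(P)$ and $m(P')$. Since $m(P') < m(P)$, this would contradict the choice of $X$.

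The main obstacle is picking the order on $\mathcal{P}$ so that both the separating set of parts lies strictly below $P$ and the separation property transfers cleanly from $(S, \cC)$ to the contraction. Using the $\leq$-minimum of each part handles both requirements simultaneously, because a part meeting $X$ automatically has its minimum below $m(P)$, and every connected subset of $\mathcal{P}$ in $\cC_\mathcal{P}$ is realised by a connected set of $(S, \cC)$ that contains all the chosen minima.
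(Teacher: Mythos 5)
Your proposal is correct and follows essentially the same route as the paper: reduce to contractions via \cref{prop:subconnectoid_closed}, well-order the parts by their $\leq$-minimal elements, separate a part $P$ by the finitely many parts meeting the separator $X$ chosen for $m(P)$, and derive the contradiction from the fact that the union of the relevant component of the contraction is a connected set of $(S,\cC)$ avoiding $X$. No gaps.
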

\begin{proof}
	By \cref{prop:subconnectoid_closed}, it remains to prove that countable separation number is maintained under contraction.
	Let $\mathcal{P}$ be some partition of $S$ into connected sets and let $(\mathcal{P}, \cC_{\mathcal{P}})$ be the minor of $(S, \cC)$ obtained by contracting $\mathcal{P}$. We denote the family of components of $(\mathcal{P}, \cC_{\mathcal{P}})$ as $\cK_\mathcal{P}$.
	
	Furthermore, let $ \leq$ be a well-order of $S$ witnessing countable separation number of $(S, \cC)$ and let $s_P$ be the $ \leq$-minimal element of $P$ for every $P \in \mathcal{P}$.
	We consider the well-order $ \leq '$ of $\mathcal{P}$ induced by $ \leq$ on $\{s_P:  P \in \mathcal{P}\}$ and prove that $\leq'$ witnesses countable separation number for $(\mathcal{P}, \cC_{\mathcal{P}})$.
	
	Let $P \in \mathcal{P}$ be arbitrary. Since $ \leq$ witnesses countable separation number for $(S, \cC)$, there is a finite set $X \subseteq \ODown{s_P}_\leq$ such that the component $K \in \cK(S \setminus X)$ containing $s_P$ avoids $\ODown{s_P}_\leq$. We consider the finite set $Y:= \{P \in \mathcal{P}: X \cap P \neq \emptyset \}$. By construction, $Y$ is a subset of $\{Q: Q <' P\}$.
	
	Suppose for a contradiction that the component $K' \in \cK_{\mathcal{P}}(\mathcal{P} \setminus Y)$ containing $P$ contains some $Q \in \mathcal{P}$ with $Q <' P$.
	Note that $\bigcup K' \subseteq K$ since $\bigcup Y \supseteq X$.
	Then $Q \subseteq K$ and in particular $s_Q \in K$.
	This gives a contradiction since $s_Q \in \ODown{s_P}_{\leq}$.
\end{proof}

Now we determine two classes of connected connectoids without countable separation number.
These two classes play a central role for the proof of \cref{thm:countable_separation_number}, as connectoids that are not of this type have certain adhesion and closure properties, which we investigate in \cref{sec:decomposition}.

We begin by showing a condition that ensures that a well-order does not witness countable separation number.

\begin{prop}\label{prop:forb}
Let $(S, \cC)$ be a connected connectoid and let $ \leq$ be a well-order of $S$. Further, let $(C_n)_{n \in \NN}$ be a family of connected sets with $\bigcap_{n \in \NN} C_n \neq \emptyset$. Let $z_n$ be the $ \leq$-minimal element of $C_n$ for every $n \in \NN$. If the elements of $(z_n)_{n \in \NN}$ are pairwise distinct, then $ \leq$ does not witness countable separation number. 
\end{prop}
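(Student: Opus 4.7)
The plan is to argue by contradiction: assume $\leq$ witnesses countable separation number, and construct from this an infinite strictly $\leq$-descending sequence in $S$, contradicting the well-orderedness of $\leq$.

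Fix some $y \in \bigcap_{n \in \NN} C_n$, set $y_0 := y$ and $N_0 := \NN$, and recursively build a strictly $\leq$-decreasing sequence $y_0 > y_1 > y_2 > \cdots$ in $S$ and a descending chain $N_0 \supseteq N_1 \supseteq \cdots$ of infinite subsets of $\NN$ satisfying $y_k \in \bigcap_{n \in N_k} C_n$ for every $k \in \NN_0$. For the recursive step, suppose $y_k$ and $N_k$ have been chosen. By the assumption on $\leq$, there is a finite set $X \subseteq \ODown{y_k}_\leq$ such that the component of $y_k$ in $\cK(S \setminus X)$ is contained in $\Up{y_k}_\leq$. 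For every $n \in N_k$ we have $z_n \leq y_k$, since $y_k \in C_n$ and $z_n$ is the $\leq$-minimum of $C_n$; as the $z_n$ are pairwise distinct, $z_n < y_k$ holds for all but at most one $n \in N_k$. For any such $n$ one must have $C_n \cap X \neq \emptyset$, because otherwise $C_n \setminus X = C_n$ would be a connected subset of $S \setminus X$ containing both $y_k$ and $z_n$, forcing $z_n$ into the component of $y_k$ in $\cK(S \setminus X)$ and contradicting $z_n \in \ODown{y_k}_\leq$.

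Since $X$ is finite and $C_n \cap X \neq \emptyset$ for infinitely many $n \in N_k$, the pigeonhole principle yields some $y_{k+1} \in X$ lying in $C_n$ for infinitely many $n \in N_k$; let $N_{k+1}$ be this infinite set. Then $y_{k+1} \in X \subseteq \ODown{y_k}_\leq$ gives $y_{k+1} < y_k$, and $y_{k+1} \in \bigcap_{n \in N_{k+1}} C_n$ by construction, completing the recursion. The only delicate point I anticipate is verifying $C_n \cap X \neq \emptyset$ for cofinitely many $n \in N_k$, which combines the connectedness of $C_n$ with the defining property of the separator $X$; the rest is routine induction and pigeonhole, and the infinite descending chain $y_0 > y_1 > y_2 > \cdots$ immediately contradicts the well-orderedness of $\leq$.
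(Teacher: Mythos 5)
Your proof is correct, and it takes a genuinely different (though closely related) route from the paper's. Both arguments turn on the same core observation: if $C_n$ contains the current pivot $p$, satisfies $z_n < p$, and avoids the finite separator $X \subseteq \ODown{p}_\leq$ provided for $p$, then the connected set $C_n$ lies in the component of $p$ in $\cK(S \setminus X)$, forcing $z_n$ into a component that by assumption avoids $\ODown{p}_\leq$. The difference is in how the well-order terminates the argument. The paper takes as pivot the $\leq$-minimal element $s$ contained in infinitely many of the $C_n$ (such an element exists since $\bigcap_{n \in \NN} C_n \neq \emptyset$); by this minimality every element of $X \subseteq \ODown{s}_\leq$ lies in only finitely many $C_n$, so some $C_n$ with $s \in C_n$ and $z_n \neq s$ avoids $X$ outright, and the contradiction is immediate with no recursion. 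You instead start from an arbitrary $y \in \bigcap_{n \in \NN} C_n$, where the separator may well meet infinitely many of the surviving $C_n$, and pigeonhole on the finite set $X$ to find a new pivot $y_{k+1} \in X \subseteq \ODown{y_k}_\leq$ lying in infinitely many of them; iterating yields an infinite strictly $\leq$-descending sequence, contradicting well-foundedness. Both uses of the well-order are legitimate: the paper's minimal-choice shortcut collapses your recursion into a single step, while your infinite-descent version is marginally longer but dispenses with any preliminary selection of a minimal element lying in infinitely many $C_n$.
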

\begin{proof}
Suppose for a contradiction that the elements $(z_n)_{n \in \NN}$ are pairwise distinct and that $\leq$ witnesses countable separation number.
Let $s$ be the $ \leq$-minimal element that is contained in infinitely many elements of $(C_n)_{n \in \NN}$.
There is a finite set $X \subseteq \ODown{s}_\leq$ such that the component $K \in \cK(S \setminus X)$ that contains $s$ avoids $\ODown{s}_\leq$. 
By the choice of $s$, the set $X$ hits only finitely many elements of $(C_n)_{n \in \NN}$.
Since the elements of $(z_n)_{n \in \NN}$ are pairwise distinct, there exists $n \in \NN$ such that $C_n$ contains $s$ and avoids the set $X$, and $z_n \neq s$.
This implies $z_n  \in \ODown{s}_\leq$ since $s \in C_n$ and further that $C_n \subseteq K$.
Thus $z_n \in C_n \cap \ODown{s}_\leq \subseteq K \cap \ODown{s}_\leq = \emptyset$, a contradiction.
\end{proof}

\subsection{Barricade}
Let $\lambda \geq \omega$ be a cardinal. A connected connectoid $(S, \cC)$ is a \emph{$(\lambda, \lambda^+)$-barricade} if there is $Y \subseteq S$ with $|Y| = \lambda$ and there is a family $(Z_i)_{i < \lambda^+}$ of nonempty, pairwise disjoint subsets of $S$ such that for every $i < \lambda^+$
\begin{itemize}
    \item $Z_i \cap Y = \emptyset$,
    \item for every finite set $X \subseteq S \setminus Z_i$, every component in $\cK(S \setminus X)$ containing an element of $Z_i$ contains an element of $Y$.
\end{itemize}

\begin{lemma} \label{lem:barricade}
Let $\lambda \geq \omega$ be some cardinal. Then no $(\lambda, \lambda^+)$-barricade $(S, \cC)$ has countable separation number.
\end{lemma}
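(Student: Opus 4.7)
The plan is to suppose for contradiction that a $(\lambda, \lambda^+)$-barricade $(S,\cC)$ has countable separation number, witnessed by some well-order $\leq$ of $S$, and then produce the configuration forbidden by \cref{prop:forb}. The idea is to use the $\lambda^+$ disjoint sets $Z_i$ to generate $\lambda^+$ connected sets with pairwise distinct $\leq$-minima, and the fact that $|Y|=\lambda$ to force uncountably many of them to share a common element via pigeonhole.

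Concretely, for each $i < \lambda^+$ I let $y_i$ be the $\leq$-minimal element of $Z_i$; since the $Z_i$ are pairwise disjoint and nonempty, the $y_i$'s are pairwise distinct. Applying the countable separation number property to $y_i$ yields a finite set $X_i \subseteq \ODown{y_i}_\leq$ such that the component $K_i \in \cK(S \setminus X_i)$ containing $y_i$ avoids $\ODown{y_i}_\leq$. Because $y_i$ is the $\leq$-minimum of $Z_i$, we have $Z_i \subseteq \Up{y_i}_\leq$ and hence $X_i \cap Z_i = \emptyset$, so the barricade hypothesis applies and guarantees some $w_i \in K_i \cap Y$.

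Now the pigeonhole step: the assignment $i \mapsto w_i$ maps $\lambda^+$ indices into $Y$, which has size $\lambda$, so there exist $w \in Y$ and a set $I \subseteq \lambda^+$ with $|I|=\lambda^+$ such that $w_i = w$ for all $i \in I$. For each $i \in I$, since $w$ and $y_i$ lie in the connected component $K_i$, pick a connected set $C_i \in \cC$ containing both $w$ and $y_i$ with $C_i \subseteq K_i$. Then $C_i \subseteq \Up{y_i}_\leq$ while $y_i \in C_i$, forcing $\min_\leq C_i = y_i$.

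Finally, choosing any countably many distinct indices $i_1, i_2, \dots \in I$, the family $(C_{i_n})_{n \in \NN}$ has $w$ as a common element, and its $\leq$-minima $(y_{i_n})_{n \in \NN}$ are pairwise distinct. This is exactly the configuration ruled out by \cref{prop:forb}, contradicting the assumption that $\leq$ witnesses countable separation number. The only delicate point in the argument is verifying $X_i \cap Z_i = \emptyset$ so that the barricade hypothesis can be invoked, and realising that $\min_\leq C_i = y_i$ (rather than some smaller element), both of which follow cleanly from choosing $y_i$ as the $\leq$-minimum of $Z_i$.
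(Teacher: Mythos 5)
Your proposal is correct and follows essentially the same route as the paper's proof: take the $\leq$-minimum $z_i$ of each $Z_i$, use the separation witness to get a component $K_i$ avoiding $\ODown{z_i}_\leq$ that must meet $Y$ by the barricade property, and then pigeonhole over $Y$ to produce countably many connected sets with a common element but pairwise distinct $\leq$-minima, contradicting \cref{prop:forb}. The only cosmetic difference is that you pigeonhole on the witnesses $w_i \in K_i \cap Y$ before choosing the connected sets $C_i$, whereas the paper chooses the $C_i$ first and then pigeonholes on which element of $Y$ they contain.
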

\begin{proof}
Suppose for a contradiction that there is a well-order $ \leq$ of $S$ witnessing that $(S, \cC)$ has countable separation number. Let $i < \lambda^+$ be arbitrary. Let $z_i$ be the $ \leq$-minimal element of $Z_i$. Since $ \leq$ witnesses countable separation number, there is a finite set $X_i \subseteq \ODown{z_i}_\leq$ such that the component $K_i \in \cK(S \setminus X_i)$ containing $z_i$ avoids $ \ODown{z_i}_\leq$.
By choice of $z_i$, the set $X_i$ is disjoint to $Z_i$. Thus the component $K_i$ contains an element of $Y$ as $(S, \cC)$ is a $(\lambda, \lambda^+)$-barricade.
Take a finite connected set $C_i \subseteq K_i$ containing $z_i$ and an element of $Y$. 
Note that $z_i$ is the $ \leq$-minimal element of $C_i$ since $C_i \cap \ODown{z_i} \subseteq K_i \cap \ODown{z_i} = \emptyset$.

Since $|Y| = \lambda$, there is an element $y \in Y$ that is contained in infinitely many elements of $(C_i)_{i < \lambda^+}$. This infinite subfamily of  $(C_i)_{i < \lambda^+}$ contradicts \cref{prop:forb} since the elements of $(z_i)_{i < \lambda^+}$ are pairwise distinct by construction.
\end{proof}

In the following theorem we use a simple version of Fodor's Lemma:

\begin{lemma}[\cite{fodor1956bemerkung}] \label{lem:fodor}
	If $f: \omega_1 \rightarrow \omega_1$ is a function satisfying $f(\alpha) < \alpha$ for all $\alpha < \omega_1$, then $f$ is constant on an uncountable subset of $\omega_1$.
\end{lemma}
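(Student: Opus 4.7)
The plan is to argue by contradiction: suppose that every fibre $f^{-1}(\gamma)$ for $\gamma < \omega_1$ is countable, and derive a contradiction by producing a countable ordinal $\alpha$ for which the value $f(\alpha)$ has nowhere to land.

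First I would observe that under the contradiction hypothesis, for every $\alpha < \omega_1$ the preimage $f^{-1}([0, \alpha])$ is a countable union of countable sets and therefore itself a countable subset of $\omega_1$; by the regularity of $\omega_1$ it is in particular bounded below $\omega_1$. Using this boundedness I would recursively build a strictly increasing $\omega$-sequence $\alpha_0 < \alpha_1 < \cdots$ of countable ordinals satisfying
\[
f^{-1}([0, \alpha_n]) \subseteq [0, \alpha_{n+1})
\]
for every $n \in \omega$. At the $n$-th step, $\alpha_{n+1}$ can be chosen as any countable ordinal strictly greater than every element of the bounded countable set $f^{-1}([0, \alpha_n])$.

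Next, set $\alpha := \sup_{n \in \omega} \alpha_n$. Since a supremum of countably many countable ordinals is again countable, $\alpha < \omega_1$. By the regressivity hypothesis $f(\alpha) < \alpha$, so by the definition of $\alpha$ there exists some $n \in \omega$ with $f(\alpha) < \alpha_n$. But then $\alpha \in f^{-1}([0, \alpha_n]) \subseteq [0, \alpha_{n+1}) \subseteq [0, \alpha)$, which is absurd. This contradicts the assumption that every fibre is countable, so some $f^{-1}(\gamma)$ must be uncountable, as required.

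The only place where one has to be careful is in the dual appeal to regularity of $\omega_1$: countability of fibres forces each $f^{-1}([0, \alpha_n])$ to be bounded below $\omega_1$, which allows the recursion to proceed, and the countable cofinality of the recursion in turn forces the limit $\alpha$ to remain below $\omega_1$. Once both points are in place, the contradiction drops out immediately from regressivity, so I do not expect any genuine obstacle beyond setting up this sequence cleanly.
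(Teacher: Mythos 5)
Your proof is correct. The paper does not prove this lemma at all --- it simply cites Fodor's original 1956 paper --- so there is no in-text argument to compare against; your closure-point construction (iterating the boundedness of $f^{-1}([0,\alpha_n])$, which follows from the regularity of $\omega_1$ and the assumed countability of the fibres, and then deriving a contradiction from regressivity at the supremum) is the standard elementary proof of this weak form of the pressing-down lemma and is sound as written. The only cosmetic remark is that the lemma as stated literally requires $f(0)<0$, which is impossible; your argument is unaffected since you only invoke regressivity at the nonzero limit ordinal $\alpha$ you construct.
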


\begin{thm}\label{thm:countable_branches}
	Let $T$ be an order tree with an uncountable branch. Then every $T$-connectoid is an $(\omega, \omega_1)$-barricade.
\end{thm}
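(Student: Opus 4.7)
The plan is to extract the barricade data directly from the uncountable branch using \cref{prop:construct_star} and a single application of Fodor's Lemma. Fix an uncountable branch $B$ of $T$ and enumerate its initial $\omega_1$ elements as $(b_\alpha)_{\alpha<\omega_1}$ in $\leq_T$-order. Because any branch is $\leq_T$-down-closed, $\ODown{b_\alpha}_T=\{b_\beta:\beta<\alpha\}$; hence for every countable limit ordinal $\alpha<\omega_1$ the vertex $b_\alpha$ is a limit of $T$, and \cref{prop:construct_star} produces a family $(H^\alpha_n)_{n\in\NN}$ of finite connected sets containing $b_\alpha$, each meeting $\ODown{b_\alpha}_T$, and with pairwise intersections contained in the \emph{core} $\bigcap_{u<b_\alpha}\Up{u}_T$. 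In particular, for each limit $\alpha$, the sets $H^\alpha_n\cap\ODown{b_\alpha}_T$ are pairwise disjoint, so they supply infinitely many distinct elements $b_{\delta^\alpha_n}$ with $\delta^\alpha_n<\alpha$.

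Next I pick, for each countable limit $\alpha$, an element $b_{\gamma(\alpha)}\in H^\alpha_1\cap\ODown{b_\alpha}_T$. The resulting map $\gamma$ is regressive on the club of countable limit ordinals, so by \cref{lem:fodor} there is an uncountable set $A$ of countable limits on which $\gamma\equiv\gamma_0$; after shrinking $A$ I may also assume $\alpha>\omega$ and $\alpha\neq\gamma_0$ for every $\alpha\in A$. I then set
\[
Y:=\{b_n:n<\omega\}\cup\{b_{\gamma_0}\},\qquad Z_\alpha:=\{b_\alpha\}\text{ for }\alpha\in A.
\]
Then $|Y|=\omega$, the $Z_\alpha$ are $\omega_1$-many nonempty pairwise disjoint singletons, and each is disjoint from $Y$; reindexing $A$ by $\omega_1$ gives the first two ingredients of an $(\omega,\omega_1)$-barricade.

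The remaining task is to check the separation condition: for each $\alpha\in A$ and every finite $X\subseteq S\setminus\{b_\alpha\}$, the component $K$ of $b_\alpha$ in $\cK(S\setminus X)$ meets $Y$. If $X\cap H^\alpha_1=\emptyset$ then $H^\alpha_1\subseteq K$, and $b_{\gamma_0}\in H^\alpha_1\cap Y$ does the job. Otherwise I use the essentially disjoint structure of $(H^\alpha_n)_n$: every element of $X$ lying outside the core belongs to at most one $H^\alpha_n$, so only finitely many of the $H^\alpha_n$'s are blocked outside the core. Picking an unblocked one yields an element $b_{\delta^\alpha_n}$ of the branch strictly below $b_\alpha$ that lies in $K$; a descent along $B$ via \cref{lem:interval}, together with a further application of \cref{prop:construct_star} at the countably many limits encountered on the way down, then extends $K$ until it reaches an element of $Y$.

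The main obstacle is controlling the effect of elements of $X$ that lie in the common core $\bigcap_{u<b_\alpha}\Up{u}_T$, where an element can potentially block all of the $H^\alpha_n$'s simultaneously, and making the subsequent descent through the countable ordinal $\delta^\alpha_n$ terminate inside $Y$ uniformly in $\alpha$. This is where the Fodor choice of $\gamma_0$, the essentially disjoint property from \cref{prop:construct_star}, and the interval property of $T$-connectoids (\cref{lem:interval}) are combined—possibly in conjunction with enlarging $Z_\alpha$ to absorb core obstructions when they cannot be avoided—to guarantee that at least one unblocked connection from $b_\alpha$ into $Y$ always survives.
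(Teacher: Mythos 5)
Your construction stalls exactly at the point you yourself flag as ``the main obstacle'', and that obstacle is not a technicality to be patched later: it is the entire content of the theorem. With $Z_\alpha=\{b_\alpha\}$ and $Y$ a fixed countable set sitting at the bottom of the branch, the separation condition must hold for \emph{every} finite $X\subseteq S\setminus\{b_\alpha\}$, and nothing in the definition of a $T$-connectoid prevents a finite ``bottleneck'' from separating the whole upper part of the branch from an initial segment of it. Concretely, take $T=\omega_1$ and the $T$-graph on $\omega_1$ in which each successor is joined to its predecessor and each limit $\lambda>\omega+1$ is joined to a sequence cofinal in $\lambda$ chosen inside $[\omega+2,\lambda)$; this is a $T$-connectoid, but every connected set meeting both $[0,\omega]$ and $[\omega+2,\omega_1)$ contains the single vertex $\omega+1$. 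The finite set $X=\{\omega+1,\,b_{\gamma_0}\}$ is disjoint from every $Z_\alpha$ with $\alpha\in A$, yet the component of $b_\alpha$ in $\cK(S\setminus X)$ is contained in $[\omega+2,\omega_1)\setminus\{b_{\gamma_0}\}$ and so misses $Y=\{b_n:n<\omega\}\cup\{b_{\gamma_0}\}$ entirely. Enlarging the $Z_\alpha$ cannot repair this: the $Z_\alpha$ must stay pairwise disjoint, so at most one of them can contain the bottleneck vertex, and the condition still fails for all the others. Likewise the proposed ``descent along $B$'' produces a strictly decreasing sequence of ordinals and hence terminates, but it may terminate at the bottleneck rather than inside $Y$; \cref{lem:interval} and \cref{prop:construct_star} only give you \emph{some} connection downward, not one avoiding a prescribed finite set.

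The fix requires routing the connection from $Z_j$ to $Y$ \emph{upwards} rather than downwards, and choosing both $Y$ and the $Z_j$ so that a finite set can never block all available routes. The paper does this by partitioning $S$ into level sets $S_\gamma$ along the branch, calling $\alpha\le\beta$ \emph{linkable} if uncountably many $a_\gamma$ can be joined to $a_\alpha$ inside $S_{[\alpha,\beta]}\cup S_\gamma$, and proving via Fodor's Lemma --- applied, for fixed $\alpha$, to the function sending $\gamma$ to the highest level below $\gamma$ met by a finite connecting set from \cref{lem:interval} --- that every $\alpha$ admits a linkable $\beta$. It then picks $\omega_1$ many pairwise disjoint linkable intervals $[\alpha_i,\beta_i]$, sets $Y=\{a_{\alpha_i}:i<\omega\}$ (so that any finite $X$ misses a whole interval containing a point of $Y$), and defines $Z_j$ as the set of points lying in \emph{uncountably many} of the linking sets of the $j$-th interval, so that a finite $X$ disjoint from $Z_j$ destroys only countably many links. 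Both $z\in Z_j$ and the surviving $a_{\alpha_i}\in Y$ can then be joined to points at arbitrarily high levels, where $X$ is absent, and joined to each other there. Your single application of Fodor to the bottom endpoint of the first star at each limit does not produce any of this machinery, so the argument as written does not close.
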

\begin{proof}
	Let $(S, \cC)$ be some $T$-connectoid. We consider an uncountable branch of $T$ and let $A$ be its initial segment of order type $\omega_1$. For $\alpha < \omega_1$ let $a_\alpha$ be the unique element of $A \cap T^\alpha$.
	Set $S_0:= \Up{a_0}_T \setminus \Up{a_{1}}_T$ and set $S_\alpha := ( \bigcap_{\beta < \alpha} \Up{a_{\beta+1}}_T) \setminus \Up{a_{\alpha+1}}_T$ for every $1 \leq \alpha < \omega_1$.
	Note that $S_\alpha = \Up{a_\alpha}_T \setminus \Up{a_{\alpha+1}}_T$ for every successors $\alpha < \omega_1$ and note further that $(S_\alpha)_{\alpha < \omega_1}$ is a partition of $S \setminus \bigcap_{\alpha < \omega_1} \Up{a_\alpha}_T$.
	We write $S_{[\alpha, \beta]}:= \bigcup_{\alpha \leq \gamma \leq \beta} S_\gamma$.
	
	We call a pair of ordinals $\alpha \leq \beta < \omega_1$ \emph{linkable} if there exists an uncountable set $\Gamma \subseteq \omega_1 \setminus \beta$ such that for every $\gamma \in \Gamma$ there is a connected set $C_\gamma$ containing $a_\alpha$ and $a_{\gamma}$ with $C_\gamma \subseteq S_{[\alpha, \beta]} \cup S_{\gamma}$.
	
	\begin{claim}\label{clm:countable_branch}
		For every $\alpha < \omega_1$ there is $\alpha \leq \beta < \omega_1$ such that $\alpha$ and $\beta$ are linkable.
	\end{claim}
	\begin{claimproof}
		By \cref{lem:interval}, for every $\gamma \geq \alpha$ there is a finite connected set $H_\gamma \subseteq S_{[\alpha, \gamma]}$ containing $a_\alpha$ and $a_{\gamma}$. Let $f(\gamma)$ be the maximal ordinal with $\gamma > f(\gamma) \geq \alpha$ such that $S_{f(\gamma)}$ contains an element of $H_\gamma$.
		This implies that $H_\gamma \subseteq S_{[\alpha, f(\gamma)]} \cup S_{\gamma}$.
		We apply Fodor's Lemma, \cref{lem:fodor}, to the function $f$. There is $\beta \geq \alpha$ and an uncountable set $O$ such that $f(\gamma) =\beta$ for every $\gamma \in O$.
		Then $\alpha$ and $\beta$ are linkable witnessed by the set $O$.
	\end{claimproof}
	
	Now we construct increasing sequences $(\alpha_i)_{i < \omega_1}$ and $(\beta_i)_{i < \omega_1}$ of ordinals such that
	\begin{itemize}
		\item the ordinals $\alpha_i$ and $\beta_i$ are linkable for every $i < \omega_1$, and
		\item the inequality $\alpha_i \leq \beta_i < \alpha_j \leq \beta_j$ holds for every $i < j < \omega_1$.
	\end{itemize}
	Let $i < \omega_1$ be some ordinal and suppose that $(\alpha_j)_{j < i}$ and $(\beta_j)_{j < i}$ have been defined.
	Then there exists $\alpha_i < \omega_1$ such that ${\alpha_j}, {\beta_j} < {\alpha_i}$ holds for every $j < i$.
	By \cref{clm:countable_branch}, there is $\beta_i < \omega_1$ such that $\alpha_i$ and $\beta_i$ are linkable.
	Then $(\alpha_j)_{j \leq i}$ and $(\beta_j)_{j \leq i}$ are as desired, which completes the construction.
	Note that the second property implies that the sets in $(S_{[\alpha_i, \beta_i]})_{i < \omega_1}$ are pairwise disjoint.
	
	We set $Y:= \{a_{\alpha_i} : i < \omega \}$. 
	For every $\omega \leq j < \omega_1$ there is an uncountable set $\Gamma_j$ of ordinals such that for every $\gamma \in \Gamma_j$ there is a connected set $C_j^\gamma$ containing $a_{\alpha_j}$ and $a_{\gamma}$ such that $C_j^\gamma \subseteq S_{[\alpha_j, \beta_j]} \cup S_{\gamma}$ since $\alpha_j$ and $\beta_j$ are linkable.
	Let $\mathfrak{C}_j:= \{C_j^\gamma: \gamma \in \Gamma_j\}$ and let $Z_j$ be the set of elements in $S$ that are contained in uncountably many elements of $\mathfrak{C}_j$. 
	
	 We prove that the set $Y$ and the sets in $(Z_j)_{\omega \leq j < \omega_1}$ witness that $(S, \cC)$ is an $(\omega, \omega_1)$-barricade.
	 Firstly, note that the connected sets of $\mathfrak{C}_j$ intersect only in $S_{[\alpha_j, \beta_j]}$ and thus $Z_j \subseteq S_{[\alpha_j, \beta_j]}$ holds for every $\omega \leq j < \omega_1$.
	Thus $Z_j \cap Z_k = \emptyset = Z_j \cap Y$ for every $j \neq k$ with $\omega \leq j, k < \omega_1$.
	
	 Secondly, let $\omega \leq j < \omega_1$ be some ordinal, let $z \in Z_j$ be arbitrary and let $X \subseteq S \setminus Z_j$ be some finite set.
	 We have to show that the component in $\cK(S \setminus X)$ containing $z$ contains an element of $Y$.
	 
	 Since $X$ is finite, there is $k < \omega_1$ such that $X \cap \bigcup_{k \leq j < \omega_1} S_{j} = \emptyset$ and there is $i < \omega$ such that $X \cap S_{[\alpha_i, \beta_i]} = \emptyset$.
	 We prove that $a_{\alpha_i}$ and $z$ are contained in the same component in $\cK(S \setminus X)$.
	 As $\alpha_i$ and $\beta_i$ are linkable, there is an ordinal $k \leq \ell < \omega_1$ and a connected set $H_1 \subseteq S_{[\alpha_i, \beta_i]} \cup S_{\ell}$ that contains $a_{\alpha_i}$ and $a_\ell$.
	 Thus $H_1$ avoids $X$.
	 
	 By construction of $Z_j$ and as $X \cap Z_j = \emptyset$, every element of $X$ is contained in at most countably many elements of $\mathfrak{C}_j$. Thus uncountably many elements of $\mathfrak{C}_j$ avoid $X$ and contain $z$. Pick $\gamma \in \Gamma_j$ with $ \gamma \geq \ell$ such that $C_j^\gamma$ avoids $X$ and contains $z$. Note that $C_j^\gamma$ contains $a_\gamma$.
	 
	 The elements $a_\ell, a_\gamma$ are contained in some connected set $H_2 \subseteq S_{[\ell, \gamma]}$ by \cref{lem:interval}. Since $S_{[\ell, \gamma]} \subseteq  \bigcup_{k \leq j < \omega_1} S_{j}$, $H_2$ avoids $X$.
	 Then the connected set $H_1 \cup H_2 \cup C_j^\gamma$ witness that $a_{\alpha_i}$ and $z$ are contained in the same component in $\cK(S \setminus X)$.
	 This finishes the proof.
\end{proof}

\begin{cor}\label{cor:t_connectoid_uncountable_branch}
	Let $(S, \cC)$ be $T$-connectoid for a normal order tree $T$.
	If there exists an uncountable branch in $T$, then $(S, \cC)$ does not have countable separation number.
\end{cor}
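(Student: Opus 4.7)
The statement follows immediately by chaining together the two preceding results in this subsection. My plan is to simply combine \cref{thm:countable_branches} with \cref{lem:barricade}.

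First, I would apply \cref{thm:countable_branches} to the given $T$-connectoid $(S, \cC)$. Since $T$ is assumed to have an uncountable branch, \cref{thm:countable_branches} directly yields that $(S, \cC)$ is an $(\omega, \omega_1)$-barricade.

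Next, I would invoke \cref{lem:barricade} with the cardinal $\lambda = \omega$ (so that $\lambda^+ = \omega_1$). That lemma tells us that no $(\omega, \omega_1)$-barricade has countable separation number, which is exactly the conclusion we seek for $(S, \cC)$.

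There is essentially no obstacle here: the corollary is a one-line deduction assembling the work already done. The substantive content lives in \cref{thm:countable_branches}, where Fodor's Lemma is used to extract the sequences $(\alpha_i)_{i < \omega_1}$ and $(\beta_i)_{i < \omega_1}$ of linkable pairs of ordinals along the uncountable branch, and in \cref{lem:barricade}, where the barricade structure is used to extract an uncountable family of finite connected sets with pairwise distinct $\leq$-minima all meeting a fixed countable witness set $Y$, thereby contradicting \cref{prop:forb}. The corollary itself merely packages these into the statement about normal order trees.
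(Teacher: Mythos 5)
Your proof is correct and is exactly the paper's own argument: apply \cref{thm:countable_branches} to conclude that $(S,\cC)$ is an $(\omega,\omega_1)$-barricade, then apply \cref{lem:barricade} with $\lambda=\omega$ to rule out countable separation number. Nothing further is needed.
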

\begin{proof}
	If $T$ has an uncountable branch, then $(S, \cC)$ is a $(\omega, \omega_1)$-barricade by \cref{thm:countable_branches}.
	By \cref{lem:barricade}, $(S, \cC)$ does not have countable separation number.
\end{proof}

\begin{cor}\label{cor:normal_partition_tree_countable_branch}
	Let $(S, \cC)$ be a connected connectoid with countable separation number.
	Further, let $T$ be a normal partition tree of $(S, \cC)$. Then all branches of $T$ are countable.
\end{cor}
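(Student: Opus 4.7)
The proof will be a short contrapositive argument, chaining together two results already established in the paper. The plan is as follows.

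Suppose for contradiction that some branch of $T$ is uncountable. Recall that, by definition of normal partition tree, the minor $(\mathcal{P}, \cC_\mathcal{P})$ of $(S, \cC)$ obtained by contracting the parts $\mathcal{P} = \{P_t : t \in T\}$ is a $T$-connectoid. Since $T$ has an uncountable branch, \cref{cor:t_connectoid_uncountable_branch} applies directly to $(\mathcal{P}, \cC_\mathcal{P})$ and tells us that $(\mathcal{P}, \cC_\mathcal{P})$ does not have countable separation number.

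On the other hand, $(\mathcal{P}, \cC_\mathcal{P})$ is by construction a minor of $(S, \cC)$, and \cref{prop:contraction_closed} asserts that countable separation number is inherited by minors. Hence if $(S, \cC)$ had countable separation number, so would $(\mathcal{P}, \cC_\mathcal{P})$, contradicting the previous paragraph. Therefore every branch of $T$ is countable.

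There is no real obstacle here: the two ingredients, \cref{cor:t_connectoid_uncountable_branch} (uncountable branches in a $T$-connectoid force an $(\omega, \omega_1)$-barricade and thus failure of countable separation number) and \cref{prop:contraction_closed} (closure of countable separation number under contraction), combine in one line once the normal partition tree $T$ is recognised as witnessing that $(\mathcal{P}, \cC_\mathcal{P})$ is a $T$-connectoid. The only thing to double-check is that the definition of normal partition tree explicitly builds in the requirement that the contracted minor be a $T$-connectoid, which it does.
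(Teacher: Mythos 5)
Your argument is correct and is essentially identical to the paper's own proof: both combine \cref{prop:contraction_closed} (countable separation number passes to the contracted minor $(\mathcal{P}, \cC_\mathcal{P})$) with \cref{cor:t_connectoid_uncountable_branch} (a $T$-connectoid with an uncountable branch in $T$ cannot have countable separation number). The only difference is purely presentational, namely that you phrase it as a contradiction.
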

\begin{proof}
	Let $\mathcal{P}$ be the partition of the normal partition tree $T$.
	Since $(S, \cC)$ has countable separation number, also the minor $(\mathcal{P}, \cC_\mathcal{P})$ obtained by contracting $\mathcal{P}$ has countable separation number by \cref{prop:contraction_closed}.
	Then \cref{cor:t_connectoid_uncountable_branch} implies that all branches of $T$ are countable as $(\mathcal{P}, \cC_\mathcal{P})$ is a $T$-connectoid.
\end{proof}

\subsection{Aronszajn-tree}
In this subsection we investigate $T$-connectoids for order trees $T$ that contain a rooted Aronszajn-tree:
An Aronszajn-tree is an order tree $T$ for which $T^\alpha$ is nonempty and countable for every $\alpha < \omega_1$ and there are no uncountable branches in $T$.
We prove that such $T$-connectoids do not have countable separation number.

\begin{lemma}\label{lem:aron}
	Let $T$ be an order tree containing a rooted Aronszajn-tree and let $(S, \cC)$ be a $T$-connectoid. Then $(S, \cC)$ does not have countable separation number.
\end{lemma}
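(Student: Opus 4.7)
The plan is to derive a contradiction from the assumption that $(S, \cC)$ has countable separation number, using \cref{prop:forb} as the decisive tool: any family of connected sets sharing a common element but with pairwise distinct $\leq$-minima immediately refutes that $\leq$ witnesses countable separation number. Fix such a witness $\leq$ and a rooted Aronszajn subtree $A$ of $T$. For each countable limit ordinal $\alpha$, pick $t_\alpha \in A$ at $T$-level $\alpha$; then $t_\alpha$ is a limit of $T$, so by \cref{prop:construct_star} there is a family $(H_n^\alpha)_{n \in \NN}$ of finite connected sets each containing $t_\alpha$, each meeting $\ODown{t_\alpha}_T$, and with $H_n^\alpha \cap H_m^\alpha \subseteq \bigcap_{u <_T t_\alpha}\Up{u}_T = \Up{t_\alpha}_T$ for $n \neq m$, the equality holding because $t_\alpha$ is a limit.

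Set $z_n^\alpha := \min_{\leq} H_n^\alpha$. If for some $\alpha$ the set $\{z_n^\alpha : n \in \NN\}$ is infinite, I would extract an infinite subsequence $(H_{n_i}^\alpha)_i$ with pairwise distinct minima; then \cref{prop:forb} applied to this subsequence (with common element $t_\alpha$) contradicts countable separation number. Otherwise, for every limit $\alpha$, the pigeonhole principle produces some $z^\alpha \in S$ equal to $z_n^\alpha$ for infinitely many $n$, and $z^\alpha$ lies in the pairwise intersection $\Up{t_\alpha}_T$. Fixing one witnessing index $n_1(\alpha)$ and picking any $w^\alpha \in H_{n_1(\alpha)}^\alpha \cap \ODown{t_\alpha}_T$, we have $z^\alpha \leq w^\alpha$ and $w^\alpha \in A$ at some $T$-level strictly less than $\alpha$. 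Fodor's lemma (\cref{lem:fodor}) applied to the regressive map $\alpha \mapsto T\text{-level of }w^\alpha$, followed by a pigeonhole on the countable level $A^{\beta^*}$, yields an uncountable $\Lambda' \subseteq \omega_1$ and a fixed $w^* \in A^{\beta^*}$ with $w^\alpha = w^*$ for all $\alpha \in \Lambda'$; in particular, $z^\alpha \leq w^*$ for these $\alpha$.

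The finale splits on the size of $\{z^\alpha : \alpha \in \Lambda'\}$. If this set is countable, a further pigeonhole provides an uncountable $\Lambda'' \subseteq \Lambda'$ on which $z^\alpha$ is constant, say $z^\alpha = z^*$; but then $z^* \geq_T t_\alpha$ for every $\alpha \in \Lambda''$, so $\{t_\alpha : \alpha \in \Lambda''\} \subseteq \Down{z^*}_T \cap A$, which is a branch of $A$. The Aronszajn property forces this branch to be countable, contradicting $|\Lambda''| > \aleph_0$. If instead the set $\{z^\alpha : \alpha \in \Lambda'\}$ is uncountable, I would extract an injective sequence $\alpha_1 < \alpha_2 < \cdots$ in $\Lambda'$ with pairwise distinct values $z^{\alpha_i}$; the finite connected sets $C_i := H_{n_1(\alpha_i)}^{\alpha_i}$ all contain $w^*$, while their $\leq$-minima are exactly the pairwise distinct $z^{\alpha_i}$. \cref{prop:forb} then delivers the final contradiction.

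The main obstacle is arranging enough $T$-structure at limit levels of $A$ to run the Fodor-plus-pigeonhole machine: without the infinitely many finite connected sets fanning out from each $t_\alpha$ into its strict ancestor set (supplied by \cref{prop:construct_star}), neither the pigeonhole producing $z^\alpha$ nor the regressive map on the $T$-level of $w^\alpha$ would have anything to bite on. Once this leverage is present, the two defining features of an Aronszajn tree—countability of every level and absence of uncountable branches—produce both sub-case contradictions, so no $\leq$ can witness countable separation number.
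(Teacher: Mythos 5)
Your overall strategy matches the paper's: at each limit level of the Aronszajn tree use \cref{prop:construct_star} to produce infinitely many finite connected sets through $t_\alpha$ that pairwise meet only ``above'' level $\alpha$, then use Fodor plus a pigeonhole on a countable level to fix a common element below, and finally contradict \cref{prop:forb}. Where you genuinely differ is in how you extract sets with pairwise distinct $\leq$-minima. The paper invokes the countable-separation-number witness already inside the per-$\alpha$ step (taking $b_\alpha$ to be the $\leq$-least element lying in infinitely many $H^\alpha_n$ and then finding a single $H_\alpha$ in the family whose $\leq$-minimum is exactly $b_\alpha$), and afterwards forces distinctness of the $b_\alpha$ by a recursive selection exploiting that each $b_\alpha$ sits at tree level $\geq\alpha$. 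You instead pigeonhole directly on the minima $z^\alpha_n$ and split into cases, absorbing the possible failure of distinctness into a separate contradiction with the no-uncountable-branch property of $A$; this also lets you skip the paper's preliminary reduction via \cref{cor:t_connectoid_uncountable_branch} to the case that all branches of $T$ are countable. That organisation is legitimate and arguably leaner.

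One step is wrong as stated: the equality $\bigcap_{u <_T t_\alpha}\Up{u}_T = \Up{t_\alpha}_T$ is false in general for a limit node $t_\alpha$, because an order tree may have several nodes at level $\alpha$ sharing the same down-closure $\ODown{t_\alpha}_T$ (this occurs, for instance, in the construction of normal partition trees, where distinct components above the same chain each receive their own node on top of it). \cref{prop:construct_star} only gives $H^\alpha_n\cap H^\alpha_m\subseteq\bigcap_{u\in\ODown{t_\alpha}_T}\Up{u}_T$, so in your ``countable'' subcase you cannot conclude $z^*\geq_T t_\alpha$. The subcase survives with a small repair: $z^*\in\bigcap_{u\in\ODown{t_\alpha}_T}\Up{u}_T$ means $\ODown{t_\alpha}_T\subseteq\ODown{z^*}_T$ for every $\alpha\in\Lambda''$, so these down-closures are nested initial segments of the well-ordered chain $\ODown{z^*}_T$, and their union is a down-closed chain of order type $\sup\Lambda''=\omega_1$ contained in $A$ (as $A$ is down-closed), i.e.\ an uncountable branch of $A$, contradicting the Aronszajn property. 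With this correction your argument goes through. (A cosmetic point you share with the paper: \cref{lem:fodor} as stated requires $f(\alpha)<\alpha$ for all $\alpha<\omega_1$, while your $f$ is only defined and regressive on limit ordinals; extend it arbitrarily on successors or restrict to the club of limits.)
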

Diestel and Leader showed a similar result for graphs \cite{diestel2001normal}*{Proposition~3.6}:
An undirected graph $G$ is a \emph{$T$-graph} if its corresponding connectoid $(V(G), \cC)$ is a $T$-connectoid. They proved that for every Aronszajn-tree $T$ every $T$-graph does not have countable colouring number.
We adapt their proof to the more general notion of connectoids:
\begin{proof}
By \cref{cor:t_connectoid_uncountable_branch}, we can assume that all branches of $T$ are countable.
Let $\hat T$ be a rooted Aronszajn-tree in $T$.
Suppose for a contradiction that there is a well-order $ \leq$ of $S$ witnessing that $(S, \cC)$ has countable separation number.

First of all, we will choose for each limit ordinal $\alpha < \omega_1 $ some element $a_\alpha \in V(\hat T) \cap \bigcup_{\gamma < \alpha }T^\gamma$ and some element $b_\alpha \in \bigcup_{\gamma \geq \alpha} T^\gamma$ such that $a_\alpha$ and $b_\alpha$ are contained in a finite connected set $H_\alpha$ whose $\leq$-minimal element is $b_\alpha$.

Let $\alpha < \omega_1$ be an arbitrary limit ordinal and let $t_\alpha \in T^{\alpha} \cap V(\hat{T})$ be an arbitrary element.
By \cref{prop:construct_star} we obtain a family $(H_\alpha^n)_{n \in \NN}$ of finite connected sets containing $t_\alpha$ and some element of $\ODown{t_\alpha}_T$ such that $H_\alpha^n \cap H_\alpha^m \subseteq \bigcup_{\gamma \geq \alpha} T^\gamma$ for every $n \neq m \in \NN$.

Let $b_\alpha$ be the $ \leq$-minimal element that is contained in infinitely many sets of $(H_\alpha^n)_{n \in \NN}$.
Note that $b_\alpha \in \bigcup_{\gamma \geq \alpha} T^\gamma$ by construction of $(H_\alpha^n)_{n \in \NN}$.
Since $ \leq$ witnesses countable separation number, there is a finite set $X \subseteq \ODown{b_\alpha}_\leq$ such that the component in $\cK(S \setminus X)$ containing $b_\alpha$ avoids $\ODown{b_\alpha}_\leq$.
By choice of $b_\alpha$, every element $x \in X$ hits at most finitely many sets of $(H_\alpha^n)_{n \in \NN}$. Thus there is a connected set $H_\alpha$ in the family $(H_\alpha^n)_{n \in \NN}$ that avoids $X$ and contains $b_\alpha$. Then $b_\alpha$ is the $ \leq$-minimal element of $H_\alpha$ by the choice of $X$.
Let $a_\alpha$ be some element of $H_\alpha \cap \ODown{t_\alpha}_T$, which exists by construction.
Note that $a_\alpha \in \hat{T}$ since $t_\alpha \in \hat{T}$.
Thus $a_\alpha, b_\alpha$ and $H_\alpha$ are as desired.

For every limit ordinal $\alpha$ we define $f(\alpha)$ to be the limit ordinal such that $a_\alpha \in T^{f(\alpha) + n}$ for some $n \in \NN$. Note that $f(\alpha) < \alpha$ holds by construction.
We apply Fodor's Lemma, \cref{lem:fodor}, to the function $f$ to obtain a limit ordinal $\gamma$ and an uncountable set $O$ of limit ordinals such that $f(\alpha) = \gamma$ for every $\alpha \in O$.
Since $a_\alpha \in \hat{T}$ for every $\alpha \in O$ and as the levels of $\hat{T}$ are countable, there is an uncountable subset $O' \subseteq O$ and $a \in \hat{T}$ such that $a_\alpha = a$ for every $\alpha \in O'$.

We construct recursively an infinite subset $O'' \subseteq O'$ such that the elements of $(b_\alpha)_{\alpha \in O''}$ are pairwise distinct.
Assume that a finite set $\tilde{O}\subseteq O'$ of ordinals with the desired property has been defined.
Since $\tilde{O}$ is finite, there exists a limit ordinal $o \in O'$ such that $\{b_{\tilde{o}}: \tilde{o} \in \tilde{O} \} \cap \bigcup_{\gamma \geq o} T^\gamma = \emptyset$.
Then $b_o \in \bigcup_{\gamma \geq o} T^\gamma $ is not in $\{b_{\tilde{o}}: \tilde{o} \in \tilde{O} \}$ and this implies that $\tilde{O} \cup \{o\}$ is as desired, which completes the construction of $O''$.
Thus $(H_\alpha)_{\alpha \in O''}$, $(b_\alpha)_{\alpha \in O''}$ and the element $a$ contradict \cref{prop:forb}.
\end{proof}

A subtree $T'$ of an order tree $T$ is \emph{upright} if the tree order of $T'$ coincides with the tree order of $T$.

\begin{cor}\label{cor:aron}
	Let $(S, \cC)$ be a connected connectoid and let $T$ be a normal partition tree of $(S, \cC)$. If $T$ contains an upright Aronszajn-tree, then $(S, \cC)$ does not have countable separation number.
\end{cor}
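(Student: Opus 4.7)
The plan is to reduce \cref{cor:aron} to \cref{lem:aron} by passing to the contracted minor. Concretely, I argue by contraposition: assume $(S, \cC)$ has countable separation number and derive that $T$ cannot contain an upright Aronszajn-tree.

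First, let $\mathcal{P} = \{P_t : t \in T\}$ be the partition of $S$ associated with the normal partition tree $T$, and consider the minor $(\mathcal{P}, \cC_\mathcal{P})$ obtained by contracting $\mathcal{P}$. By \cref{prop:contraction_closed}, countable separation number is preserved under taking minors, so $(\mathcal{P}, \cC_\mathcal{P})$ also has countable separation number. On the other hand, the defining property of a normal partition tree is precisely that $(\mathcal{P}, \cC_\mathcal{P})$ is a $T$-connectoid.

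Second, I observe that if $T$ contains an upright Aronszajn-tree $T'$, then the tree order on $T'$ coincides with the restriction of $\leq_T$, so $T'$ itself is an order tree with countable nonempty levels and no uncountable branch, i.e.\ a rooted Aronszajn-tree sitting inside $T$. This is precisely the hypothesis required to invoke \cref{lem:aron} on the $T$-connectoid $(\mathcal{P}, \cC_\mathcal{P})$, which then yields that $(\mathcal{P}, \cC_\mathcal{P})$ does \emph{not} have countable separation number, contradicting the previous paragraph.

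There is no genuine obstacle here: the corollary is essentially a packaging of \cref{lem:aron} through the functorial behaviour of countable separation number under the contraction \cref{prop:contraction_closed}. The only point worth double-checking is the terminological match between \emph{upright} subtree (tree order inherited) and the \emph{rooted Aronszajn-tree} hypothesis of \cref{lem:aron}, and this match is immediate from the definitions.
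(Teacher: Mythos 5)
Your overall strategy (contract $\mathcal{P}$, use \cref{prop:contraction_closed} to preserve countable separation number, then invoke \cref{lem:aron}) is the same as the paper's, but there is a genuine gap at exactly the point you wave off as ``immediate from the definitions''. An \emph{upright} Aronszajn-tree in $T$ is not the same as a \emph{rooted} Aronszajn-tree in $T$: uprightness only says the tree order is inherited from $\leq_T$, while \cref{lem:aron} asks for a rooted Aronszajn-tree, i.e.\ one sharing the root of the ambient order tree. If the upright Aronszajn-tree $\hat T$ is rooted at some $r$ of positive (possibly infinite) height in $T$, you cannot apply \cref{lem:aron} to the $T$-connectoid $(\mathcal{P}, \cC_\mathcal{P})$ as you propose: the proof of \cref{lem:aron} picks, for every limit $\alpha < \omega_1$, an element $t_\alpha \in T^\alpha \cap V(\hat T)$ and uses that the levels $T^\alpha$ of the \emph{ambient} tree meet $\hat T$ in a nonempty countable set; for an upright $\hat T$ rooted above the root of $T$ these intersections are empty for small $\alpha$ and shifted thereafter, so the hypothesis of the lemma genuinely fails for $(T,\hat T)$ as given.

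The missing step, which is the actual content of the corollary's proof, is to relocate the root: let $r$ be the root of the upright Aronszajn-tree, set $\mathcal{P}' := \{P_t : t \in \Up{r}_T\}$, pass to the induced subconnectoid on $\bigcup \mathcal{P}'$ and contract $\mathcal{P}'$ (both operations preserve countable separation number by \cref{prop:subconnectoid_closed} and \cref{prop:contraction_closed}). One then checks that the result is a $T[\Up{r}_T]$-connectoid, and in the order tree $T[\Up{r}_T]$, which is rooted at $r$, the Aronszajn-tree \emph{is} rooted, so \cref{lem:aron} applies and yields the contradiction. Without this restriction your argument invokes \cref{lem:aron} under a hypothesis it does not have.
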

\begin{proof}
	Let $\mathcal{P} = \{P_t: t \in T \}$ be the partition of $S$ such that $(\mathcal{P}, \cC_{\mathcal{P}})$ is a $T$-connectoid.
	It suffices to show that the minor $(\mathcal{P}, \cC_{\mathcal{P}})$ does not have countable separation number by \cref{prop:contraction_closed}.
	
	By \cref{cor:t_connectoid_uncountable_branch}, we can assume that all branches of $T$ are countable.
	Let $\hat T$ be an upright Aronszajn-tree in $T$ and let $r$ be the root of $\hat T$.
	Furthermore, let $\Tilde T$ be the order tree obtained by restricting $T$ to $\hat T$ and $\ODown{r}_T$.
	Since all branches of $T$ are countable, all branches of $\Tilde T$ are countable.
	Moreover, all level of $\Tilde T$ are countable, since $\hat T$ is an Aronszajn-tree.
	This implies that $\Tilde T$ is a rooted Aronszajn-tree in $T$, and thus $(\mathcal{P}, \cC_{\mathcal{P}'})$ does not have countable separation number by~\cref{lem:aron}.
\end{proof}

\section{Decomposition into subconnectoids of finite adhesion} \label{sec:decomposition}
In this section we prove that every connected connectoid $(S, \cC)$ with countable separation number can be decomposed in a specific way (see below).
This decomposition property is the central tool in the proof of \cref{thm:countable_separation_number}, as it enables us to construct the desired normal spanning tree recursively.

We recall some definitions.
Let $(S, \cC)$ be a connectoid.
Given a partition tree $T$ with a partition $\{P_t: t \in T\}$ of $S$ into elements of $\cC$, we set $P(U):= \bigcup_{u \in U} P_u$ for every $U \subseteq V(T)$ and $P(T'):= P(V(T'))$ for every subtree $T'$ of $T$.

Given a set $\hat S \subseteq S$, a component $K \in \cK(S \setminus \hat S)$ has \emph{finite adhesion} to $\hat S$ if there exists a finite set $X \subseteq \hat S$ such that $K$ is a component in $\cK(S \setminus X)$.
Otherwise, the component $K$ has \emph{infinite adhesion} to $\hat S$.
Furthermore, we call $\hat S$ \emph{$\lambda$-strong} for some cardinal $\lambda$ if for every $X \subseteq S \setminus \hat S$ with $|X| \leq \lambda$ and every finite connected set $C$ there is a finite connected set $C'$ such that $C' \cap X = \emptyset$ and $C \cap \hat S = C' \cap \hat S$.
Furthermore, we call $\hat S$ $(< \alpha)$-strong for some cardinal $\alpha$ if $\hat S$ is $\lambda$-strong for every cardinal $\lambda < \alpha$.

\begin{restatable}{lemma}{DecompositionLemma} \label{thm:decomp}
	Let $(S, \cC)$ be a connected connectoid that has countable separation number and $\kappa:= |S| \geq \omega_1$. Then every normal partition tree $T$ of $(S, \cC)$ can be written as $T = \bigcup_{i < \cf(\kappa)} T_i$ for a continuous increasing sequence of rooted subtrees $(T_i)_{i < \cf(\kappa)}$ such that for every $i < \cf(\kappa)$
	\begin{itemize}
		\item $\omega \leq |P(T_i)| < \kappa$,
		\item every component in $\cK(S \setminus P(T_i))$ has finite adhesion to $P(T_i)$, and
		\item $ P(T_i)$ is $|P(T_i)|$-strong.
	\end{itemize}
\end{restatable}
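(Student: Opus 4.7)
The plan is to leverage two structural consequences of countable separation number for $T$: by \cref{cor:normal_partition_tree_countable_branch} every branch of $T$ is countable, and by \cref{cor:aron} $T$ contains no upright Aronszajn subtree. The former yields $\height(t)<\omega_1$ and hence $|P_t|\leq\omega$ for every $t\in T$, so that $|P(T')|=\max(|T'|,\omega)$ for every infinite down-closed $T'\subseteq T$. Thus controlling $|P(T_i)|$ reduces to controlling $|T_i|$.

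I would construct $(T_i)_{i<\cf(\kappa)}$ by transfinite recursion. Fix a strictly increasing cofinal sequence $(\mu_i)_{i<\cf(\kappa)}$ of infinite cardinals below $\kappa$ with $\sup_i\mu_i=\kappa$, and aim to build down-closed $T_i\subseteq T$ with $|P(T_i)|=\mu_i$ realising the three conclusions. Limit stages set $T_i:=\bigcup_{j<i}T_j$. Successor stages rest on a closure step: given a down-closed $\hat T\subseteq T$ with $|P(\hat T)|\leq\mu<\kappa$, produce a down-closed $\hat T^*\supseteq\hat T$ with $|P(\hat T^*)|\leq\mu$, with $P(\hat T^*)$ being $\mu$-strong, and with every component of $\cK(S\setminus P(\hat T^*))$ having finite adhesion. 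I would implement this closure by a transfinite iteration of length $\mu$, at each substep either adjoining a finite connected witness for an outstanding pair $(X,C)$ appearing in the definition of $\mu$-strong, or adjoining finitely many elements along $\ODown{t}_T$ for a minimal $t\in T\setminus\hat T^*$ whose neighbourhood in the minor $(\mathcal{P},\cC_\mathcal{P})$ is still infinite. Countable branches of $T$ keep each down-closure countable, and a diagonal bookkeeping enumeration handles all demands within $\mu$ substeps.

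The hard part will be the limit step: verifying that $P(T_i)$ is $\mu_i$-strong although each $P(T_j)$ was only $\mu_j$-strong, since an obstruction $X\subseteq S\setminus P(T_i)$ of size $\mu_i$ need not fit under any single $\mu_j$. To handle this I would strengthen the recursion invariant, arranging during the successor step producing $T_{j+1}$ to run the closure step against the full target cardinal $\mu_{j+1}$ while retaining all witnesses previously produced for smaller cardinals; then at a limit $i$, any $X$ of size $\leq\mu_i$ together with a finite $C$ is routed by approximating $X$ by initial segments treated at lower stages and fitting the routings together using the accumulated witnesses. The absence of upright Aronszajn subtrees enters by bounding the height at which new offenders can appear during closure, preventing an uncountable cascade and thereby forcing termination within $\mu$ steps.

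Finite adhesion at limits comes essentially for free from continuity: a component of $\cK(S\setminus P(T_i))$ has the form $P(\Up{t}_T)$ for some $t$ minimal in $T\setminus T_i$, and such a $t$ is already minimal in $T\setminus T_j$ for some $j<i$, so the finite separator witnessing finite adhesion at stage $j$ still works at stage $i$. Finally $\bigcup_i T_i=T$ is forced by enumerating $T$ alongside the recursion and consuming at least one prescribed new element of $T$ at each successor stage.
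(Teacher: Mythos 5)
Your outline gets the right skeleton (countable branches via \cref{cor:normal_partition_tree_countable_branch}, a closure operation iterated along a cofinal sequence of cardinals), but two of its load-bearing steps fail.

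First, the strongness cannot be obtained by enumerating ``outstanding pairs $(X,C)$'' and adjoining witnesses. The condition $\mu$-strong quantifies over \emph{all} sets $X\subseteq S\setminus P(\hat T^*)$ of size $\leq\mu$, of which there are at least $2^\mu>\mu$, so no bookkeeping of length $\mu$ can list the demands; and even for a single pair, the required $C'$ must avoid $X$, so adjoining anything to the tree does not create it. The idea that is actually needed is different: for each \emph{finite} trace $Z\subseteq P(T_i)$ (of which there are only $|P(T_i)|$ many) one fixes a maximal family $\mathcal F(Z)$ of finite connected sets meeting $P(T_i)$ exactly in $Z$ and pairwise disjoint outside $Z$, and one \emph{absorbs $\bigcup\mathcal F(Z)$ into the tree whenever this family is small}. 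Then any finite connected $C$ leaving $P(T_i)$ forces $|\mathcal F(Z)|$ to be large for $Z=C\cap P(T_i)$, and a counting argument (fewer than $|\mathcal F(Z)|$ members can meet $X\cup P(T_i)$ outside $Z$) produces $C'$ for free, uniformly in $X$. This also dissolves the limit-stage worry you flag: the only object that must ``fit below'' an earlier stage is the finite set $Z$, which it does by continuity; no accumulation of witnesses across stages is needed. (A genuine case split between $\kappa$ regular and singular remains necessary, since for singular $\kappa$ the threshold for ``small'' must be $\kappa_i$ rather than $\kappa$.)

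Second, finite adhesion at limit stages is not ``for free from continuity''. A component of $\cK(S\setminus P(T_i))$ is $P(\Up{t}_T)$ for $t$ minimal in $T-T_i$, but if $t$ is a limit node of $T$ whose predecessors $\ODown{t}_T$ are spread cofinally over the $T_j$, then $t$ is \emph{not} minimal in any $T-T_j$ with $j<i$, so no earlier separator is available; since branches are countable this happens exactly at stages of countable cofinality. Handling this case requires a real argument: either one arranges each $V(T_i)$ to be an initial segment of a well-order witnessing countable separation number of the contracted minor and derives a contradiction from \cref{prop:construct_star} and \cref{prop:forb} (showing the bad configuration cannot occur), or one builds each $T_i$ as an $\omega_1$-indexed increasing union so that any countable family of connectors witnessing infinite adhesion is trapped below some index, as in \cref{prop:finite_adhesion_uncountable}. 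Your appeal to the absence of Aronszajn subtrees also sits in the wrong place: it is used inside the closure lemma to keep $|T'|=|X|$ (preventing a countable seed from closing up to size $\omega_1$), not to bound where ``new offenders'' appear.
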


Pitz proved a similar result for undirected graphs using the weaker assumption that every minor has countable colouring number \cite{pitz2021proof}*{Theorem~3.6}.
We adapt his proof technique to the setting of connectoids.

We begin by proving an adhesion property of connected connectoids with countable separation number in \cref{sec:adhesion_property}.
We deduce that subsets of normal partition trees can be closed in a specific way in \cref{sec:closure_property}.
Finally, we construct the desired decomposition for \cref{thm:decomp} by recursion in \cref{sec:decomp}.

\subsection{Adhesion property of connected connectoids that are not barricades}\label{sec:adhesion_property}

\begin{lemma}\label{lem:adhesion}
Let $\lambda \geq \omega$ be a cardinal, let $(S, \cC)$ be a connected connectoid that is not a $(\lambda, \lambda^+)$-barricade and let $Y \subseteq S$ with $|Y| = \lambda$. Then there exists $Z \subseteq S \setminus Y$ with $|Z| \leq \lambda$ such that every component in $\cK(S \setminus Y)$ with finite adhesion to $Y$ avoids $Z$, and every component in $\cK(S \setminus Y)$ avoiding $Z$ has finite adhesion to $Y \cup Z$.
\end{lemma}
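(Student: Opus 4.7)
The plan is to construct $Z$ by transfinite recursion of length at most $\lambda^+$, deriving a contradiction (by exhibiting a $(\lambda, \lambda^+)$-barricade) should the recursion fail to terminate in time. I set $Z_0 := \emptyset$, take unions at limits, and at each successor stage either stop (when $Z_\alpha$ already satisfies both conclusions of the lemma) or extend $Z_\alpha$ by a non-empty set $W_\alpha \subseteq S \setminus Y$ of size $\leq \lambda$, disjoint from $Z_\alpha$, that satisfies the \emph{$Y$-barricade condition}: for every finite $X \subseteq S \setminus W_\alpha$, every component in $\cK(S \setminus X)$ meeting $W_\alpha$ meets $Y$.

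First I would observe that the ``finite adhesion components avoid $Z$'' property is preserved throughout: if some $w \in W_\beta$ lay in a component $K$ of $\cK(S \setminus Y)$ with finite adhesion witness $X \subseteq Y$, then $X \subseteq S \setminus W_\beta$ (since $W_\beta \cap Y = \emptyset$) while the component of $w$ in $\cK(S \setminus X)$ would be $K$, disjoint from $Y$, contradicting the $Y$-barricade condition of $W_\beta$. Hence whenever $Z_\alpha$ fails the lemma it is the second property that breaks, and I may fix a component $K_\alpha \in \cK(S \setminus Y)$ with $K_\alpha \cap Z_\alpha = \emptyset$ of infinite adhesion to $Y \cup Z_\alpha$ (and in particular to $Y$). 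By insisting that $W_\alpha$ meet $K_\alpha$, the chosen components $K_\alpha$ become pairwise distinct---once $K_\alpha$ meets $Z_{\alpha+1}$, it can no longer be picked as some later $K_\beta$---and hence pairwise disjoint as components of $\cK(S \setminus Y)$, providing the backbone for the pairwise disjointness of the $W_\alpha$'s.

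The technical heart---and the main obstacle---is constructing $W_\alpha$. The naive candidate $W_\alpha := K_\alpha$ works in the graph case: for any finite $X \subseteq S \setminus K_\alpha$ the infinite adhesion of $K_\alpha$ to $Y$ leaves some element of $Y \setminus X$ reachable, and the maximality of $K_\alpha$ in $S \setminus Y$ forces any excess element in the component of $K_\alpha$ in $\cK(S \setminus X)$ to lie in $Y$. In the connectoid setting this can fail, because the $K_\alpha$-to-$Y$ connections may route through finite ``bottleneck'' sets in other components of $\cK(S \setminus Y)$; removing such a bottleneck via $X$ isolates $K_\alpha$. I would handle this by an inner recursion, starting from a connected sub-piece $W_\alpha^{(0)} \subseteq K_\alpha$ of cardinality at most $\lambda$ and at each step adjoining the (finite) portion in $S \setminus Y$ of any $X$ that witnesses a failure of the barricade condition for the current $W_\alpha^{(n)}$, taking care to pick these additions outside $Z_\alpha$ so that pairwise disjointness with earlier $W_\beta$'s is preserved. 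Since each addition is finite, the iteration closes at a set of cardinality $\leq \lambda$.

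If the recursion runs for $\lambda^+$ stages, the resulting family $(W_\alpha)_{\alpha < \lambda^+}$ is pairwise disjoint, non-empty, disjoint from $Y$, and each member satisfies the $Y$-barricade condition, witnessing a $(\lambda, \lambda^+)$-barricade and contradicting the hypothesis. Hence termination occurs at some $\alpha^* < \lambda^+$, giving $|Z| = |Z_{\alpha^*}| \leq |\alpha^*| \cdot \lambda \leq \lambda$, as required.
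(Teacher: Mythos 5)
Your overall architecture matches the paper's: build the pieces $Z_\alpha$ by transfinite recursion, use the hypothesis that $(S,\cC)$ is not a $(\lambda,\lambda^+)$-barricade to stop the recursion before $\lambda^+$, and note that a component of $\cK(S\setminus Y)$ with finite adhesion to $Y$ cannot meet any piece satisfying the barricade condition. That part is fine. The problem is exactly where you flag ``the technical heart'': your construction of the new piece $W_\alpha$ does not go through. Your inner recursion adjoins, one at a time, the sets $X\setminus Y$ for finite witnesses $X$ of failure of the barricade condition, and you assert that ``since each addition is finite, the iteration closes at a set of cardinality $\leq\lambda$.'' There is no termination argument here: at each stage there may be up to $|S|$-many finite witnesses $X$ (and $|S|$ can exceed $\lambda$), handling one per step over $\omega$ stages cannot exhaust them, and a witness for the union $\bigcup_n W_\alpha^{(n)}$ is a witness for cofinally many $W_\alpha^{(n)}$ that your process may simply never have selected. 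So nothing guarantees that the limit of your inner recursion satisfies the barricade condition, nor that it has size $\leq\lambda$ if you try to run the recursion longer.

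The paper's construction of the new piece avoids any closure argument. Given a component $K\in\cK(S\setminus Y)$ avoiding $Z$ with infinite adhesion to $Y\cup Z$, fix $b\in K$ and use the infinite adhesion to build a family $(H_n)_{n\in\NN}$ of finite connected sets with $b\in H_n$, $H_n\cap(Y\cup Z)\neq\emptyset$ and $H_n\cap H_m\cap(Y\cup Z)=\emptyset$ for $n\neq m$; maximality of $K$ in $S\setminus Y$ forces each $H_n$ to meet $Y$ (otherwise $K\cup H_n$ would be a larger connected subset of $S\setminus Y$). The new piece is then the set $Z_\sigma$ of elements lying in \emph{infinitely many} of the $H_n$. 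The barricade condition for $Z_\sigma$ is automatic: any finite $X\subseteq S\setminus Z_\sigma$ meets only finitely many $H_n$, so for each $z\in Z_\sigma$ some $H_n$ containing $z$ survives in $\cK(S\setminus X)$ and carries $z$ to $Y$. This ``points in infinitely many members of a fan'' device is the missing idea; without it (or an equivalent), your proof is incomplete.
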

Before we prove \cref{lem:adhesion} we remark that for every $X \subseteq Y \subseteq S$ every component in $\cK(S \setminus X) \cap \cK(S \setminus Y)$ with finite adhesion to $X$ also has finite adhesion to $Y$. 
\begin{proof}
We construct a family $(Z_i)_{i < \sigma}$ of nonempty, pairwise disjoint subsets of $S$ such that
\begin{enumerate}[label=(\roman*)]
    \item\label{itm:adhesion_1} $|Z_i| \leq \omega$,
    \item\label{itm:adhesion_2} $Z_i \cap Y = \emptyset$, and
    \item\label{itm:adhesion_3} for every finite set $X \subseteq S \setminus Z_i$, every component in $\cK(S \setminus X)$ containing an element of $Z_i$ contains an element of $Y$
\end{enumerate}
for every $i < \sigma$ by adding new sets $Z_i$ recursively for as long as possible.

If $\sigma \geq \lambda^+$, then $(Z_i)_{i < \lambda^+}$ and $Y$ witness that $(S, \cC)$ is a $(\lambda, \lambda^+)$-barricade, contradicting the assumption. Therefore we can assume $\sigma < \lambda^+$.
Thus the construction terminates after $\lambda$ many steps and the set $Z:= \bigcup_{i < \sigma} Z_i$ has size at most $\lambda$.
Further, every component $K \in \cK(S \setminus Y)$ with finite adhesion to $Y$ is an element of $\cK(S \setminus X)$ for some finite $X \subseteq Y \subseteq S \setminus Z$.
As $K$ avoids $Y$, the component $K$ does not contain an element of $Z$ by \labelcref{itm:adhesion_3}.
Thus every component in $\cK(S \setminus Y)$ with finite adhesion to $Y$ avoids $Z$.

It remains to prove that every component in $\cK(S \setminus Y)$ avoiding $Z$ has finite adhesion to $Y \cup Z$.
Suppose for a contradiction that there exists a component $K \in \cK(S \setminus Y)$ avoiding $Z$ that has infinite adhesion to $Y \cup Z$.
We show that we can extend the family $(Z_i)_{i < \sigma}$ by an element $Z_\sigma$, contradicting the maximality of this family.

Let $b \in K$ be arbitrary. As $K$ has infinite adhesion to $Y \cup Z$, there is a family $(H_n)_{n \in \NN}$ of finite connected sets with $b \in H_n$, $H_n \cap (Y \cup Z) \neq \emptyset$ and $H_n \cap H_m \cap (Y \cup Z) = \emptyset$ for every $n \neq m \in \NN$ by~\cref{prop:infinite_adhesion_witness}.
Since $K$ is a component in $\cK(S \setminus Y)$, $H_n \cap (S \setminus K) \supseteq H_n \cap (Y \cup Z) \neq \emptyset$ implies $H_n \cap Y \neq \emptyset$ for every $n \in \NN$.

Let $Z_\sigma$ be the set of elements that are contained in infinitely many elements of $(H_n)_{n \in \NN}$.
Note that $b \in Z_\sigma$.
By construction,  $Z_\sigma \subseteq S \setminus (Y \cup Z)$ holds.
Thus $(Z_i)_{i \leq \sigma}$ is a family of nonempty, pairwise disjoint subsets of $S$ and \labelcref{itm:adhesion_2} is satisfied for $\sigma$.
Furthermore, \labelcref{itm:adhesion_1} is satisfied for $\sigma$ since $|\bigcup_{n \in \NN} H_n | = \omega$ and $Z_\sigma \subseteq \bigcup_{n \in \NN} H_n$.

Let $z \in Z_\sigma$ be arbitrary and let $X \subseteq S \setminus Z_\sigma$ be some finite set.
By choice of $Z_\sigma$, every $x \in X \subseteq S \setminus Z_\sigma$ is contained in at most finitely many connected sets of $(H_n)_{n \in \NN}$. Thus the set $X$ intersects only finitely many elements of $(H_n)_{n \in \NN}$.
Since infinitely many elements of $(H_n)_{n \in \NN}$ contain $z$, there is $n \in \NN$ such that $z \in H_n$ and $H_n$ avoids $X$.
We already observed that $H_n$ contains an element of $Y$.
Thus the component in $\cK(S \setminus X)$ containing $z$ also contains an element of $Y$.
Since $z$ and $X$ were chosen arbitrarily, \labelcref{itm:adhesion_3} holds for $\sigma$.
This completes the proof.
\end{proof}

\subsection{Closure lemma}\label{sec:closure_property}
\begin{lemma} \label{thm:closure}
Let $S$ be an uncountable set and let $(S, \cC)$ be a connected connectoid with countable separation number.
Furthermore, let $T$ be a normal partition tree of $(S, \cC)$.
Then for every infinite set $X \subseteq V(T)$ there is a rooted subtree $T' \subseteq T$ with $X \subseteq V(T')$ and $|V(T')| = |X|$ such that every component in $\cK(S \setminus P(T'))$ has finite adhesion to $P(T')$.
\end{lemma}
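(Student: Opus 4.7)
The plan is to construct $T'$ by iteratively closing $X$ under two operations: the $\leq_T$-downclosure in the order tree $T$, and a witness operation provided by \cref{lem:adhesion}. The closure is performed in $\omega$ stages.

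Before starting, I would establish size bounds. Since $(S,\cC)$ has countable separation number, \cref{cor:normal_partition_tree_countable_branch} gives that all branches of $T$ are countable; combined with the partition-size condition defining a normal partition tree and the fact that $\cf(\alpha)\leq\omega$ for any countable ordinal $\alpha$, this yields $|P_t|\leq\omega$ for every $t\in V(T)$. Consequently, for any infinite $V'\subseteq V(T)$, both the $\leq_T$-downclosure of $V'$ and the set $P(V')$ have cardinality at most $|V'|$. Moreover, by \cref{lem:barricade}, countable separation number implies that $(S,\cC)$ is not a $(\lambda,\lambda^+)$-barricade for any infinite $\lambda$, so \cref{lem:adhesion} applies throughout.

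The construction proceeds as follows. Let $T_0$ be the $\leq_T$-downclosure of $X$, so $X\subseteq V(T_0)$ and $|V(T_0)|\leq|X|$. Given $T_n$, apply \cref{lem:adhesion} to $(S,\cC)$ with $Y=P(T_n)$ to obtain $Z_n\subseteq S\setminus P(T_n)$ with $|Z_n|\leq|P(T_n)|\leq|X|$ such that every component of $\cK(S\setminus P(T_n))$ avoiding $Z_n$ has finite adhesion to $P(T_n)\cup Z_n$. Set $V_n:=\{t\in V(T):P_t\cap Z_n\neq\emptyset\}$ and let $T_{n+1}$ be the $\leq_T$-downclosure of $V(T_n)\cup V_n$; then $|V(T_{n+1})|\leq|X|$. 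Define $T':=\bigcup_{n<\omega}T_n$; then $T'$ is a down-closed subtree containing $X$ with $|V(T')|=|X|$. For the verification, fix $K\in\cK(S\setminus P(T'))$ and let $K_n$ be the component of $\cK(S\setminus P(T_n))$ containing $K$. Since $Z_n\subseteq P(T_{n+1})\subseteq P(T')$ and $K\cap P(T')=\emptyset$, the set $K$ avoids every $Z_n$. Using that $(\mathcal{P},\cC_\mathcal{P})$ is a $T$-connectoid and applying \cref{lem:tdigraph_property} in the contraction, one obtains $K=P(\Up{s}_T)$ and $K_n=P(\Up{s_n}_T)$ for suitable $\leq_T$-minimal elements $s,s_n$ with $s_n\leq_T s$. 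If $s_n=s$ for some $n$, then $K_n=K$ avoids $Z_n$, and \cref{lem:adhesion} yields finite adhesion of $K$ to $P(T_n)\cup Z_n\subseteq P(T')$, as required. This handles the case where $\ODown{s}_T$ is finite, since then $\ODown{s}_T\subseteq V(T_n)$ for some $n$.

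The main obstacle is when $s$ is a limit, so $\ODown{s}_T$ has order type $\omega$ and $s_n<_T s$ for every $n$. Here I would apply \cref{prop:construct_star} to the limit $s$ in the $T$-connectoid $(\mathcal{P},\cC_\mathcal{P})$ to extract a family of finite connected sets in $\cC_\mathcal{P}$, each containing $s$ and meeting $\ODown{s}_T$ and pairwise almost disjoint; translating back to $(S,\cC)$ yields infinitely many pairwise-almost-disjoint finite connected sets bridging $K$ and $P(\ODown{s}_T)\subseteq P(T')$. Combining these bridges with the iterative choice of $Z_n$ produces a witness configuration that contradicts countable separation number of $(S,\cC)$, in the spirit of \cref{prop:forb} and the barricade argument of \cref{thm:countable_branches}. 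Executing this limit-case analysis cleanly, and ensuring the bookkeeping turns the bridges into a forbidden configuration, is the technical crux of the proof.
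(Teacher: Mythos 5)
There is a genuine gap, and it sits exactly where you place "the technical crux": your closure runs for only $\omega$ stages, and the limit case you identify is not actually resolved. If $t \notin V(T')$ but $\ODown{t}_T$ is infinite and its elements enter the construction at unboundedly many stages $n < \omega$, then $\ODown{t}_T \subseteq V(T')$ while $\ODown{t}_T \not\subseteq V(T_n)$ for every $n$, so no single $Z_n$ is available to certify finite adhesion of $K = P(\Up{t}_T)$. Your proposed escape --- applying \cref{prop:construct_star} to get pairwise almost-disjoint finite connected sets bridging $K$ and $P(\ODown{t}_T)$ and then contradicting countable separation number "in the spirit of \cref{prop:forb}" --- does not go through as stated: \cref{prop:forb} needs the $\leq$-minimal elements of the bridging sets to be pairwise distinct. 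In the analogous step of the proof of \cref{thm:decomp} this is secured by arranging $P(T_i)$ to be an initial segment of a well-order witnessing countable separation number, so that each bridge has its minimum inside $P(T_i)$ and outside the common intersection; here $P(T')$ has no such initial-segment property, and all your bridges share the fixed element $b \in P_t$, which could well be the $\leq$-minimum of each of them. (Also, $\ODown{t}_T$ need not have order type $\omega$; it can be any countable ordinal.)

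The paper avoids this problem structurally rather than by contradiction: it iterates the same two closure operations for $\omega_1$ stages, sets $h(t)$ to be the order type of $\ODown{t}_T \setminus V(T_0)$ (countable by \cref{cor:normal_partition_tree_countable_branch}), and proves by induction that every $t \in T' - T_0$ already lies in $T_{h(t)+1}$. Consequently, for any $t$ with $\ODown{t}_T \subseteq V(T')$ the whole chain $\ODown{t}_T$ is contained in the single stage $V(T_{h(t)})$, the component $P(\Up{t}_T)$ avoids $Z_{h(t)}$, and \cref{lem:adhesion} applies at that stage. The price of the $\omega_1$-length iteration is the cardinality bookkeeping you do not need in your version: when $|X| = \omega$ one must rule out $|V(T')| = \omega_1$, which the paper does by extracting an upright Aronszajn subtree and invoking \cref{cor:aron}. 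If you want to salvage your $\omega$-stage approach you would have to supply a genuinely new argument for the limit case; as written, the proof is incomplete.
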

\begin{proof}
We recursively build an increasing sequence $(T_i)_{i \leq \omega_1}$ of rooted subtrees of $T$. Let $T_0:= T[\Down{X}_T]$, i.e.\ the minimal rooted subtree of $T$ that contains $X$. We assume that some $T_i$ for $i < \omega_1$ has been defined.
By \cref{lem:barricade}, $(S, \cC)$ is not a $(|P(T_i)|, |P(T_i)|^+)$-barricade since $(S, \cC)$ has countable separation number.
Thus, by \cref{lem:adhesion}, there exists $Z_i \subseteq S \setminus P(T_i)$ such that $|Z_i| \leq |P(T_i)|$, every component in $\cK(S \setminus P(T_i))$ with finite adhesion to $P(T_i)$ avoids $Z_i$, and every component in $\cK(S \setminus  P(T_i))$ avoiding $Z_i$ has finite adhesion to $P(T_i) \cup Z_i$. Let $T(Z_i):= \Set{t \in T: P_t \cap Z_i \neq \emptyset}$ and set
\[
T_{i+1}:= T_i \cup \Down{T(Z_i)}_T.
\]
For limits $\alpha \leq \omega_1$ we set $T_\alpha:= \bigcup_{i < \alpha} T_i$  and let $T' := T_{\omega_1}$.

\begin{claim}\label{clm:closure_1}
Every component in $\cK(S \setminus P(T_i))$ with finite adhesion to $P(T_i)$ is also a component in $\cK(S \setminus P(T'))$ with finite adhesion to $P(T')$.
\end{claim}
\begin{claimproof}
	Let $i < \omega_1$ be arbitrary and let $K \in \cK(S \setminus P(T_i))$ be some component with finite adhesion to $P(T_i)$.
We show by transfinite induction that $K$ is a component in $\cK(S \setminus P(T_j))$ with finite adhesion to $P(T_j)$ for every $i < j \leq \omega_1$.

We assume that $K$ is a component in $\cK(S \setminus P(T_j))$ with finite adhesion to $P(T_j)$ for some $i \leq j < \omega_1$.
Note that $K = P(\Up{s}_T)$ for some $s \in T - T_j$ by \cref{lem:tdigraph_property}.
By choice of $Z_j$, $K$ avoids $Z_j$. 
Then $T(Z_j) \cap \Up{s}_T = \emptyset$ holds by the definition of $T(Z_j)$ and thus $\Down{T(Z_j)}_T \cap \Up{s}_T = \emptyset$.
This implies that $\Up{s}_T$ avoids $T_{j + 1}$ and therefore $K = P(\Up{s}_T)$ is a component in $\cK(S \setminus P(T_{j + 1}))$. 
In particular, $K$ has finite adhesion to $P(T_j)$ since $P(T_j)$ is a superset of $P(T_i)$.

We assume that $K$ is a component in $\cK(S \setminus P(T_{j'}))$ for every $i \leq j' <j$, where $j \leq \omega_1$ is an arbitrary limit.
Then $K$ avoids $P(T_{j'})$ for every $i \leq j' < j$.
Thus $K$ avoids $\bigcup_{i \leq j' < j} P(T_{j'}) = P(T_j)$.
Then $K$ is a component in $\cK(S \setminus P(T_j))$ since $K$ is a component in $\cK(S \setminus P(T_i))$.
The component $K$ has finite adhesion to $P(T_j)$ since $P(T_j)$ is a superset of $P(T_i)$.

Thus $K$ is indeed a component in $\cK(S \setminus P(T'))$ with finite adhesion to $P(T')$.
\end{claimproof}

Note that $\ODown{t}_T \cap V(T_0)$ forms an initial segment of $\ODown{t}_T$ for every $t \in T' - T_0$ since $T_0$ is a rooted subtree of $T$.
We set $h(t)$ to be the order type of $\ODown{t}_T \setminus V(T_0)$. Note that $h(t) \leq \height(t) < \omega_1$, by \cref{cor:normal_partition_tree_countable_branch}.
\begin{claim}\label{clm:closure_2}
For every $t \in T' - T_0$ we have $t \in T_{h(t)+1}$.
\end{claim}
\begin{claimproof}
Suppose not and pick $t \in T'$ minimal regarding $h(t)$ with the property $t \notin T_{h(t) + 1}$.
Then $\ODown{t}_T \subseteq V(T_{h(t)})$ holds.
We consider the minor $(V(T), \cC_T)$ obtained by contracting the partition $\{P_t: t \in T\}$ and we denote its components by $\cK_T$.
Note that $t \notin V(T_{h(t)}), V(T_{h(t)+1})$ but $\ODown{t}_T \subseteq V(T_{h(t)}), V(T_{h(t)+ 1})$.
Thus the set $\Up{t}_T$ is a component in $\cK_T( T - T_{h(t)}) \cap \cK_T(T - T_{h(t)+ 1})$ by \cref{lem:tdigraph_property}.
Thus $\Up{t}_T \cap T(Z_{h(t)}) = \emptyset$ holds by construction.

We rephrase these properties in the setting of $(S, \cC)$: The set $P(\Up{t}_T)$ is a component in $\cK(S \setminus P(T_{h(t)})) \cap \cK(S \setminus P(T_{h(t) +1}))$ and $P(\Up{t}_T) \cap Z_{h(t)}  = \emptyset$ holds.
By choice of $Z_{h(t)}$, the component $P(\Up{t}_T) \in \cK(S \setminus P(T_{h(t)}))$ has finite adhesion to $P(T_{h(t)}) \cup Z_{h(t)}$.
Thus $P(\Up{t}_T)$ has finite adhesion to $P(T_{h(t)+1})$ since $P(T_{h(t)+1})$ is a superset of $P(T_{h(t)}) \cup Z_{h(t)}$.
Then $P(\Up{t}_T)$ is a component in $\cK(S \setminus P(T'))$ by \cref{clm:closure_1}, contradicting $t \in T'$.
\end{claimproof}

By construction, $T'$ is a rooted subtree of $T$ with $X \subseteq V(T')$. It remains to prove that every component in $\cK(S \setminus P(T'))$ has finite adhesion to $P(T')$ and that $|V(T')|=|X|$ holds.

Let $K$ be an arbitrary component in $\cK(S \setminus P(T'))$. Then $K = P(\Up{t}_T)$ for some $t \in T - T'$ such that $\ODown{t}_T \subseteq V(T')$ by \cref{lem:tdigraph_property}.
Then $\ODown{t}_T \subseteq V(T_{h(t)})$ holds by \cref{clm:closure_2} and the fact that $h(t) > h(s)$ for every $s \in \ODown{t}_T \setminus V(T_0)$. 
Note that $K$ is a component in $\cK(S \setminus P(\ODown{t}_T))$ since $T$ is a normal partition tree.
Thus $K$ is a component in $\cK(S \setminus P(T_{h(t)})) \cap \cK(S \setminus P(T_{h(t)+1}))$.
This implies $T(Z_{h(t)}) \cap \Up{t}_T = \emptyset$, which is equivalent to $Z_{h(t)} \cap K = \emptyset$.
By choice of $Z_{h(t)}$, $K$ has finite adhesion to $P(T_{h(t)}) \cup Z_{h(t)}$.
Since $P(T')$ is a superset of $P(T_{h(t)}) \cup Z_{h(t)}$, the component $K$ has finite adhesion to $P(T')$.

Now we show by transfinite induction that $|T_i| = |X|$ for every $i < \omega_1$. For limits, this property is clearly maintained.
We assume that $|T_i| = |X|$ for some $i < \omega_1$. Note that all branches of $T_i$ and in particular all sets $P_t$ for $t \in T_i$ are countable by \cref{cor:normal_partition_tree_countable_branch}.
Thus $P(T_{i})$ has size $|T_i|=|X|$.
By choice of $Z_i$, the set $Z_i$ has size at most $|X|$ and therefore $T(Z_i)$ has size at most $|X|$. As the branches of $T$ are countable, also $\Down{T(Z_i)}_T$ has size at most $|X|$.
We deduce that $T_{i + 1}$ has size $|X|$.

Finally we prove that $T'$ has size $|X|$. If $|X|$ is uncountable, this holds true, since $|T'| =|\bigcup_{i < \omega_1} T_i| \leq \omega_1 \cdot |X|$. Suppose for a contradiction, that $|X| = \omega$ and $|T'| = \omega_1$ hold. We consider the set $Y:= \Set{t \in T' - T_0: h(t)=0}$.
The set $Y$ is a subset of $T_1$, by \cref{clm:closure_2}, and thus the set $Y$ contains at most countably many elements.
Since $T' - T_0 \subseteq \bigcup_{y \in Y} \Up{y}_T \cap T'$, there is $y \in Y$ such that $|\Up{y}_T \cap T'| = \omega_1$.
Set $T'':= T'[\Up{y}_T]$.
Note that $T''$ is an upright subtree of $T$ rooted at $y$.
The branches of $T''$ are countable, as the branches of $T$ are countable.
For every $t \in T''$ the height of $t$ with respect to the subtree $T''$ coincides with $h(t)$.
Thus the $i$-th level of $T''$ is contained in $T_{i + 1}$, by \cref{clm:closure_2}, for every $i < \omega_1$ and therefore countable.
Since $|T''|= \omega_1$, the $i$-th level of $T''$ has to be nonempty for every $i < \omega_1$.
Thus $T''$ is an Aronszajn-tree, which contradicts \cref{cor:aron}.
\end{proof}

\subsection{Decomposition Lemma}\label{sec:decomp}

\DecompositionLemma*

\begin{proof}[Proof of \cref{thm:decomp} for $\kappa$ regular]
We consider the partition $\mathcal{P}:=\{P_t: t \in T\}$ and the minor $(\mathcal{P}, \cC_\mathcal{P})$ obtained by contracting $\mathcal{P}$.
By \cref{cor:normal_partition_tree_countable_branch}, we have $|\mathcal{P}|  = |T| = |S| = \kappa$.
By \cref{prop:contraction_closed} and \cref{lem:countable_separtion_number_equivalence}, there exists a well-order $\leq$ of $\mathcal{P}$ of order type $\kappa$ witnessing that $(\mathcal{P}, \cC_\mathcal{P})$ has countable separation number.
We pick an enumeration $V(T) = \Set{t_i : i < \kappa}$.
 
 Next, we recursively construct a continuous increasing sequence $(T_i)_{i < \kappa}$ of rooted subtrees of $T$ such that for each $i < \kappa$
\begin{enumerate}[label=(\alph*)]
	\item\label{itm:decomp_4} $t_i \in T_{i + 1}$,
    \item\label{itm:decomp_1} $\omega \leq |P(T_i)| < \kappa$,
    \item\label{itm:decomp_5} the set $V(T_i)$ is an initial segment of $(\mathcal{P}, \leq )$,
    \item\label{itm:decomp_2} every component in $\cK(S \setminus P(T_i))$ has finite adhesion to $P(T_i)$, and
    \item\label{itm:decomp_3} $P(T_i)$ is $(< \kappa)$-strong.
\end{enumerate}
Then the sequence $(T_i)_{i < \kappa}$ is as desired.

Let $T_{-1}$ be some infinite, countable rooted subtree of $T$ and let $T_\alpha := \bigcup_{i < \alpha} T_i$ for every limit $\alpha < \kappa$ such that every $T_i$ with $i < \alpha$ has been defined.
We assume that $T_i$ has been defined for some $i \in \{-1\} \cup \kappa$. We set $T_i^0 := T_i \cup \Down{t_i}_T$.
Now we construct recursively an increasing sequence $(T_i^n)_{n \in \NN_0}$ of subtrees of $T$ using the following three steps.
\begin{itemize}
	\item If  $T_i^{3n}$ has been defined for some $n \in \NN_0$, let $T_i^{3n + 1} \subseteq T$ be a rooted tree of size $|T_i^{3n}|$ containing $T_i^{3n}$ such that every component in $\cK(S \setminus P(T_i^{3n + 1}))$ has finite adhesion to $P(T_i^{3n + 1})$, which exists by \cref{thm:closure}.
	\item If $T_i^{3n+1}$ has been defined for some $n \in \NN_0$, let $T_i^{3n + 2}$ be the smallest rooted subtree of $T$ containing the $\leq$-down-closure of $T_i^{3n + 1}$.
	\item If $T_i^{3n + 2}$ has been defined for some $n \in \NN_0$, we construct a set $Y \subseteq S$.
	We begin by setting $Y:= \emptyset$. For every finite subset $Z \subseteq P(T_i^{3n + 2})$ we consider a maximal family $\mathcal{F}(Z)$ of finite connected sets that intersect $P(T_i^{3n +2})$ exactly in $Z$ such that every pair of distinct elements in $\mathcal{F}(Z)$ is disjoint outside $Z$. If $|\mathcal{F}(Z)| < \kappa$ holds, we add $\bigcup \mathcal{F}(Z)$ to $Y$. Note that $|Y| < \kappa$ holds since $\kappa$ is regular.
	
	\noindent
	Let $T(Y):= \{t \in T: P_t \cap Y \neq \emptyset \}$ and set $T_i^{3n + 3}:= T_i^{3n + 2} \cup \Down{T(Y_i^{3n + 2})}_T$.
\end{itemize}
Finally, we set $T_{i + 1}:= \bigcup_{m \in \NN} T_i^m$.

By construction, $t_i$ is an element of $T_{i + 1}$ for every $i < \kappa$, which proves \labelcref{itm:decomp_4}.
We show \labelcref{itm:decomp_1} by transfinite induction.
If some $i < \kappa$ satisfies  \labelcref{itm:decomp_1}, we show that all three steps in the construction of $(T_i^n)_{n \in \NN_0}$ ensure that the trees in $(T_i^n)_{n \in \NN_0}$ have size $< \kappa$.
For the first step, $|T_i^{3n}|= |T_i^{3n+1}|$ holds for every $n \in \NN$ by construction.
For the second step, note that $\leq$ is of order type $\kappa$ and therefore every $\leq$-down-closure of a set of a size $< \kappa$ has size $< \kappa$.
For the third step, note that the set $\Down{T(Y_i^{3n + 2})}_T$ has size $\omega \cdot |Y|< \kappa$ since all branches of $T$ are countable by \cref{cor:normal_partition_tree_countable_branch}.

Since $\kappa$ is regular, the union $T_{i + 1}= \bigcup_{m \in \NN} T_i^m$ also has size $<\kappa$.
By the same argument, the tree $T_i$ have size $< \kappa$ for some limit $i < \kappa$ if all $T_j$ with $j < i$ has size $< \kappa$.
Then \labelcref{itm:decomp_1} is satisfied for every $i < \kappa$ as all sets $P_t $ with $t \in T$ are countable by \cref{cor:normal_partition_tree_countable_branch}.

By construction, $P(T_{i+1})$ is an initial segment of $(\mathcal{P}, \leq)$ for every $i < \kappa$.
Since $(T_i)_{i < \kappa}$ is a continuous sequence, $P(T_i)$ also forms an initial segment of $(\mathcal{P}, \leq)$ for every limit $i < \kappa$, which proves \labelcref{itm:decomp_5}.

We turn our attention to the proof of \labelcref{itm:decomp_2}.
Let $K$ be an arbitrary component in $\cK(S \setminus P(T_{i}))$ for some $i < \kappa$.
Then $K = P(\Up{t}_T)$ for some $t \in T$ with $\ODown{t}_T \subseteq V(T_{i})$ by \cref{lem:tdigraph_property}.

If there exist $s \in T_i$ with $\Down{s}_T \supseteq \ODown{t}_T$, then there is $j < i$ and $n \in \NN$ such that $s \in T_j^{3n +1}$ and therefore $\ODown{t}_{T} \subseteq T_j^{3n +1}$.
Then $K$ is a component in $\cK(S \setminus P(T_{j}^{3n + 1}))$ since $T$ is a normal partition tree.
This implies that $K$ has finite adhesion to $P(T_j^{3n + 1})$ by construction of $T_j^{3n + 1}$.
Thus $K$ has finite adhesion to $P(T_{i})$ since $P(T_{i})$ is a superset of $P(T_j^{3n + 1})$.

Otherwise every $s \in T_i$ has the property that $\Down{s}_T \not\supseteq \ODown{t}_T$, which implies $V(T_i) \cap \bigcap_{u \in \ODown{t}_T} \Up{u}_T = \emptyset$.
In particular, $t$ is a limit in $T$.
We apply \cref{prop:construct_star} to obtain a family $(H_n)_{n \in \NN}$ of finite connected sets in $\cC_{\mathcal{P}}$ with $t \in H_n$, $H_n \cap \ODown{t}_T \neq \emptyset$ and $H_n \cap H_m \subseteq \bigcap_{u \in \ODown{t}_T} \Up{u}_T$ for every $n \neq m \in \NN$.
Note that the $ \leq$-minimal element $b_n$ of $H_n$ is contained in $V(T_{i})$, since $V(T_{i})$ is a $ \leq$-initial segment and as $\emptyset \neq H_n \cap \ODown{t}_T \subseteq H_n \cap V(T_{i})$.
Thus $b_n \neq b_m$ for every $n \neq m \in \NN$ since $H_n \cap H_m \subseteq \bigcap_{u \in \ODown{t}_T} \Up{u}_T \subseteq \mathcal{P} \setminus V(T_i)$.
Then applying~\cref{prop:forb} to the connected sets $(H_n)_{n \in \NN}$ shows that $\leq$ does not witness that $(\mathcal{P},\cC_{\mathcal{P}})$ has countable separation number, a contradiction.
Thus \labelcref{itm:decomp_2} holds for every $i < \kappa$.

Finally, we prove that \labelcref{itm:decomp_3} is satisfied for every $i < \kappa$.
Let $C$ be an arbitrary finite connected set and let $X \subseteq S \setminus P(T_i)$ be some set with $|X| < \kappa$.
We have to show that there exists a finite connected set $C' \subseteq S \setminus X$ with $C \cap P(T_i) = C' \cap P(T_i)$.
If $C \subseteq P(T_i)$, then the connected set $C':= C$ is as desired.

Thus we can assume that $C \setminus P(T_i) \neq \emptyset$.
Set $Z:= C \cap P(T_i)$, and let $j < i$ and $n \in \NN$ such that $Z \subseteq P(T_j^{3n + 2})$.
For the construction of $T_j^{3n + 3}$ we considered a maximal family $\mathcal{F}(Z)$ of finite connected sets intersecting $P(T_i^{3n +2})$ exactly in $Z$ such that every pair of distinct elements of $\mathcal{F}(Z)$ is disjoint outside $Z$.
By maximality of $\mathcal{F}(Z)$,
$(C \setminus Z) \cap \bigcup \mathcal{F}(Z)  \neq \emptyset$ holds.
We did not add $\bigcup \mathcal{F}(Z)$ to $Y$ in the construction of $T_j^{3n + 3}$, as $C \setminus Z \subseteq S \setminus P(T_i) \subseteq S \setminus P(T_j^{3n + 3})$ holds.
Thus the family $\mathcal{F}(Z)$ has size at least $\kappa$.

As $|P(T_i)| < \kappa$ and $|X| < \kappa$, there is a connected set $C' \in \mathcal{F}(Z)$ such that $(C' \setminus Z) \cap P(T_i) = \emptyset$ and $C' \cap X = \emptyset$.
Thus $C'$ is as desired, which shows that \labelcref{itm:decomp_3} is satisfied.
\end{proof}

\begin{prop}\label{prop:finite_adhesion_uncountable}
	Let $(S, \cC)$ be a connected connectoid and $(S_i)_{i < \omega_1}$ an increasing sequence of subsets of $S$.
	Let $K$ be a component in $\cK(S \setminus \bigcup_{i < \omega_1}S_i)$ such that the component in $\cK(S \setminus S_i)$ containing $K$ has finite adhesion to $S_i$ for every $i < \omega_1$.
	Then $K$ has finite adhesion to $\bigcup_{i < \omega_1}S_i$.
\end{prop}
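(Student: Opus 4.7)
The plan is to argue by contradiction, supposing that $K$ has infinite adhesion to $\hat{S} := \bigcup_{i < \omega_1} S_i$. As in the proof of \cref{lem:adhesion}, this lets me construct recursively a family $(H_n)_{n \in \NN}$ of finite connected sets, all containing a fixed $k_0 \in K$, each meeting $\hat{S}$, and pairwise disjoint on $\hat{S}$. The inductive step: given $H_1, \ldots, H_n$, set $X := \bigcup_{m \leq n} (H_m \cap \hat{S})$, a finite subset of $\hat{S}$. Since $K$ has infinite adhesion to $\hat{S}$, the component $K^X$ of $\cK(S \setminus X)$ containing $K$ properly contains $K$, so I can take $H_{n+1}$ to be a finite connected subset of $K^X$ joining $k_0$ to some $y \in K^X \setminus K$. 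Such $H_{n+1}$ avoids $X$ by construction, and it necessarily meets $\hat{S}$: otherwise it would be contained in the component of $\cK(S \setminus \hat{S})$ containing $k_0$, which is $K$, contradicting $y \notin K$.

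The next step exploits the uncountable cofinality of $\omega_1$. Each chosen $x_n \in H_n \cap \hat{S}$ lies in some $S_{f(n)}$ with $f(n) < \omega_1$, and $\{f(n) : n \in \NN\}$ is countable, hence bounded by some $j < \omega_1$. Thus every $x_n$ lies in $S_j$.

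The final step invokes the hypothesis on $K_j$, the component of $\cK(S \setminus S_j)$ containing $K$: pick a finite $X_j \subseteq S_j$ witnessing its finite adhesion. Each $H_n$ contains both $k_0 \in K \subseteq K_j$ and $x_n \in S_j$, while $K_j$ is disjoint from $S_j$; so $H_n \not\subseteq K_j$, and since $K_j$ is a component of $\cK(S \setminus X_j)$, maximality forces $H_n \cap X_j \neq \emptyset$. Pigeonhole on the finite set $X_j$ then produces some $y \in X_j \subseteq \hat{S}$ lying in two distinct $H_n, H_m$, contradicting pairwise disjointness on $\hat{S}$.

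The main difficulty is setting up the family $(H_n)$ with the correct disjointness property; once that is arranged, the uncountable cofinality of $\omega_1$ funnels the witnesses $x_n$ into a single stage $S_j$, at which point the finite-adhesion hypothesis and a pigeonhole finish the job.
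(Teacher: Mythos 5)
Your proof is correct and follows essentially the same route as the paper's: assume infinite adhesion, build a family $(H_n)_{n\in\NN}$ of finite connected sets meeting $K$ and $\bigcup_{i<\omega_1}S_i$ that are pairwise disjoint on $\bigcup_{i<\omega_1}S_i$, use countability of $\bigcup_n H_n$ against the uncountable cofinality of $\omega_1$ to push the witnesses into a single $S_j$, and contradict the finite adhesion of the component of $\cK(S\setminus S_j)$ containing $K$. The only difference is that you spell out the recursive construction of $(H_n)$ and the final pigeonhole on the witnessing set $X_j$, both of which the paper leaves implicit; these details are correct.
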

\begin{proof}
	Suppose for a contradiction that $K$ has infinite adhesion to $\bigcup_{i < \omega_1}S_i$.
	Then there is a family $(H_n)_{n \in \NN}$ of finite connected sets such that $H_n \cap H_m \cap \bigcup_{i < \omega_1}S_i = \emptyset$, $H_n \cap \bigcup_{i < \omega_1}S_i \neq \emptyset$ and $H_n \cap K \neq \emptyset$ for every $n \neq m \in \NN$ by~\cref{prop:infinite_adhesion_witness}.
	Since $\bigcup_{n \in \NN} H_n$ is countable, there is $j < \omega_1$ such that $\bigcup_{i < \omega_1}S_i \cap \bigcup_{n \in \NN} H_n \subseteq S_j$.
	This contradicts the fact that the component in $\cK(S \setminus S_j)$ containing $K$ has finite adhesion to $S_j$.
\end{proof}

\begin{proof}[Proof of \cref{thm:decomp} for $\kappa$ singular]
We fix an enumeration $V(T) = \Set{t_i : i < \kappa}$ and a continuous increasing sequence $\Set{\kappa_i: i < \cf(\kappa)}$ of uncountable cardinals with limit $\kappa$ such that $\kappa_0 > \cf(\kappa)$.
First of all, we construct a family $(T_{i,j})_{i < \cf(\kappa), j < \omega_1}$ of rooted subtrees of $T$ with $|T_{i,j}| = \kappa_i$ such that every component in $\cK(S \setminus P(T_{i,j}))$ has finite adhesion to $P(T_{i,j})$.

For every $i < \cf(\kappa)$ and $j < \omega_1$ the tree $T_{i,j}$ gets equipped with an enumeration of its vertex set $\Set{t_{i, j}^k: k < \kappa_i}$ and further, we define a set $Y_{i,j} \subseteq S$ in the following way:
We begin by setting $Y_{i,j}:= \emptyset$.
For every finite subset $Z \subseteq P(T_{i,j})$ we consider a maximal family $\mathcal{F}_{i, j}(Z)$ of finite connected sets intersecting $P(T_{i, j})$ in exactly $Z$ such that every pair of distinct elements in $\mathcal{F}_{i, j}(Z)$ is disjoint outside $Z$.
We add $\bigcup \mathcal{F}_{i, j}(Z)$ to $Y_{i, j}$ if $|\mathcal{F}_{i, j}(Z)| \leq \kappa_i$.
By construction, $|Y_{i, j}| \leq \kappa_i$ holds.
We set $T(Y_{i,j}):= \{t \in T: P_t \cap Y_{i,j} \neq \emptyset \}$.

We ensure that the following conditions hold for all $i < cf(\kappa)$ and $j < \omega_1$
\begin{enumerate}[label=(\roman*)]
    \item\label{itm:decomp_a} $\Set{t_k: k < \kappa_i} \subseteq V(T_{i, 0})$,
    \item\label{itm:decomp_b} $\bigcup\Set{V(T_{i', j'}): i' \leq i, j' \leq j} \subseteq V(T_{i,j})$,
    \item\label{itm:decomp_c} $\Set{t_{i', j}^{k}: k < \kappa_i} \subseteq V(T_{i, j+ 1})$ for all $i < i' < \cf(\kappa)$, and
    \item\label{itm:decomp_d} $T(Y_{i,j'}) \subseteq V(T_{i,j})$ for $j' < j$.
\end{enumerate}
The conditions \labelcref{itm:decomp_a,itm:decomp_b,itm:decomp_c,itm:decomp_d} specify a set of $\kappa_i$-many elements, which have to be contained in $T_{i,j}$.
We apply \cref{thm:closure} to obtain a rooted subtree $T_{i,j} \subseteq T$ of size $\kappa_i$ containing this set such that every component in $\cK(S \setminus P(T_{i,j}))$ has finite adhesion to $P(T_{i,j})$.

Finally, we set $T_i := \bigcup_{j < \omega_1} T_{i,j}$ for every $i < \cf(\kappa)$ and show that the sequence $(T_i)_{i < \cf(\kappa)}$ is as desired.
Note that $|T_i| = \omega_1 \cdot \kappa_i = \kappa_i < \kappa$.
Thus, by \cref{cor:normal_partition_tree_countable_branch}, $|P(T_i)| < \kappa$ holds.
By \labelcref{itm:decomp_b}, the sequence $(T_i)_{i < \cf(\kappa)}$ is increasing.
Note that \labelcref{itm:decomp_c} ensures that $T_{\ell, j} \subseteq \bigcup_{i < \ell} T_{i, j + 1}$ for every limit $\ell < \cf(\kappa)$ and every $j< \omega_1$.
This implies that $(T_i)_{i < \cf(\kappa)}$ is continuous.
Since all components in $\cK(S \setminus P(T_{i,j}))$ have finite adhesion to $P(T_{i,j})$ for $j < \omega_1$ and by \cref{prop:finite_adhesion_uncountable}, all components in $\cK(S \setminus P(T_i))$ have finite adhesion to $P(T_i)$ for every $i < \kappa$.

It remains to prove that $P(T_i)$ is $\kappa_i$-strong for every $i < \cf(\kappa)$.
Let $i < \cf(\kappa)$ be arbitrary, let $C$ be some finite connected set and let $X \subseteq S \setminus P(T_i)$ be some set with $|X| \leq \kappa_i$.
We have to show that there exists a finite connected set $C' \subseteq S \setminus X$ with $C \cap P(T_i) = C' \cap P(T_i)$.
If $C \subseteq P(T_i)$ holds, the connected set $C':= C$ is as desired.
Thus we can assume that $C \setminus P(T_i) \neq \emptyset$ and set $Z:= C \cap P(T_i)$.

Let $j < \omega_1$ such that $Z \subseteq P(T_{i,j})$.
By maximality of $\mathcal{F}_{i, j}(Z)$, the set $C \setminus Z$ intersects $\bigcup \mathcal{F}_{i, j}(Z)$.
We did not add $\bigcup \mathcal{F}_{i, j}(Z)$ to $Y_{i,j}$ in the construction of $T_{i, j+1}$, as $C \setminus Z \subseteq S \setminus P(T_i) \subseteq S \setminus P(T_{i, j + 1})$.
Thus the family $\mathcal{F}_{i, j}(Z)$ has size bigger than $\kappa_i$.
As $|P(T_i)| = \kappa_i$ and $|X| \leq \kappa_i$, there is a connected set $C' \in \mathcal{F}_{i,j}(Z)$ such that $(C' \setminus Z) \cap P(T_i) = \emptyset$ and $C' \cap X = \emptyset$.
Thus $C'$ is as desired, which proves that every $P(T_i)$ is $\kappa_i$-strong.
This completes the proof.
\end{proof}

\section{Characterisation via countable separation number} \label{sec:characterisation_countable_separation_number}

\begin{prop}\label{prop:cofinal_finite_adhesion}
	Let $(S, \cC)$ be a connected connectoid and $\hat S \subseteq S$ a subset such that every component in $\cK(S \setminus \hat S)$ has finite adhesion to $\hat S$. Further, let $T$ be some normal tree containing $\hat S$ cofinally.
	Then every component in $\cK(S \setminus V(T))$ has finite adhesion to $V(T)$. 
\end{prop}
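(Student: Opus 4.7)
The plan is to argue by contradiction via the neighbourhood characterisation of finite adhesion in \cref{prop:finite_neighbourhood}. I would assume some component $K \in \cK(S \setminus V(T))$ has $N_K$ infinite. By \cref{prop:neighbourhood}, $N_K$ is a $\leq_T$-down-closed chain in $T$, and since $T$ is an undirected rooted tree its branches have order type at most $\omega$, so an infinite down-closed chain is exactly the vertex set of a rooted ray $R = (t_n)_{n \in \NN}$ of $T$.

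Next I would introduce the unique component $K' \in \cK(S \setminus \hat S)$ containing $K$ and use the hypothesis to choose a finite $X \subseteq \hat S$ with $K' \in \cK(S \setminus X)$. The target is to place some element of $\hat S$ inside $K'$, contradicting $K' \cap \hat S = \emptyset$. Since $X$ is finite and each $\Down{x}_T$ is finite (as $T$ is a tree), only finitely many indices $n$ satisfy $t_n \in \bigcup_{x \in X} \Down{x}_T$, so for all sufficiently large $n$ the set $X$ is disjoint from $\Up{t_n}_T$. Fixing such an $n$, I would invoke cofinality of $\hat S$ in $T$ to pick $\hat s \in \hat S \cap \Up{t_n}_T$; by \cref{prop:equivalence_weak_normal_tree} this $\hat s$ lies in $K_{t_n}^T$, and $K \subseteq K_{t_n}^T$ holds by definition of $N_K$. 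Connectedness of $K_{t_n}^T$ then yields a finite connected set $F \subseteq K_{t_n}^T$ containing both $\hat s$ and some chosen $k \in K$.

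To close the argument I would observe that $F \cap X \subseteq K_{t_n}^T \cap X \subseteq \Up{t_n}_T \cap X = \emptyset$ (the middle inclusion by \cref{prop:equivalence_weak_normal_tree}), so $F \subseteq S \setminus X$; since $F$ is connected and meets $K'$ at $k$ while $K'$ is a component of $\cK(S \setminus X)$, we conclude $F \subseteq K'$, placing $\hat s$ in $K' \cap \hat S$ and delivering the contradiction. The only mildly delicate point is arranging $X \cap \Up{t_n}_T = \emptyset$ for large $n$, which is immediate from $\Down{x}_T$ being finite, so I do not foresee any serious obstacle. Notably, the argument relies only on the weak normal tree properties of $T$ (\cref{prop:equivalence_weak_normal_tree}, \cref{prop:neighbourhood}, \cref{prop:finite_neighbourhood}) and not on the full normal tree property, so the same proof would establish the proposition with ``normal tree'' replaced by ``weak normal tree''.
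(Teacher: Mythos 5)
Your proof is correct. Its core is the same as the paper's: both take the component $K'\in\cK(S\setminus\hat S)$ containing $K$, a finite $X\subseteq\hat S$ with $K'\in\cK(S\setminus X)$, use cofinality of $\hat S$ in $T$ to find $\hat s\in\hat S$ above a tree vertex $t$ with $\Up{t}_T\cap X=\emptyset$, and then connect $\hat s$ to $K$ inside $S\setminus X$ to force $\hat s\in K'\cap\hat S$, contradicting $K'\in\cK(S\setminus\hat S)$. The difference is only the entry point: you argue by contradiction, invoking \cref{prop:finite_neighbourhood} and \cref{prop:neighbourhood} to turn infinite adhesion into an infinite down-closed chain $N_K$, i.e.\ a rooted ray, and then choose $t=t_n$ far enough along it; the paper instead shows directly that $K$ is a component in $\cK(S\setminus\Down{X}_T)$, which exhibits the finite witnessing set explicitly and avoids the detour through the neighbourhood characterisation of finite adhesion. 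Both routes are sound, and your closing observation that only weak normality of $T$ is used applies equally to the paper's argument.
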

\begin{proof}
	Let $K$ be an arbitrary component in $\cK(S \setminus V(T))$.
	Let $K'$ be the component in  $\cK(S \setminus \hat S)$ that is superset of $K$.
	As $K'$ has finite adhesion to $\hat S$, there is a finite set $X \subseteq \hat S$ such that $K'$ is a component in $\cK(S \setminus X)$.
	We prove that $K$ is a component in $\cK(S \setminus  \Down{X}_T)$, which shows that $K$ has finite adhesion to $V(T)$.
	
	Suppose for a contradiction that $K$ is not a component in $\cK(S \setminus  \Down{X}_T)$.
	Then $K$ is a proper subset of a component $K'' \in \cK(S \setminus \Down{X}_T)$.
	Since $K \in \cK(S \setminus V(T))$, $K''$ contains some $t \in V(T) \setminus \Down{X}_T$.
	By \cref{prop:equivalence_weak_normal_tree}, $\Up{t}_T \subseteq K_t^T$ is contained in $K''$ since $K_t^T \cap \Down{X}_T \subseteq K_t^T \cap V(T) \cap \Down{X}_T = \Up{t}_T \cap \Down{X}_T = \emptyset$.
	Since $T$ contains $\hat S$ cofinally, $\hat S \cap \Up{t}_T \neq \emptyset$ and in particular $K'' \cap \hat S \neq \emptyset$.
	As $K' \in \cK(S \setminus X)$ is a superset of $K'' \in \cK(S \setminus  \Down{X}_T)$, also $K'$ contains an element of $\hat S$.
	This contradicts the fact that $K'$ is a component in $\cK(S \setminus \hat S)$ and completes the proof.
\end{proof}

We are now ready to prove:
\CountableSeparationNumber*
The proof of~\cref{thm:countable_separation_number} is inspired by the proof of Pitz' result~\cite{pitz2021proof}*{Theorem 1.1}.
\begin{proof}
For the forward direction we assume that $(S, \cC)$ has a normal spanning tree $T$.
By~\cref{prop:well-order_respecting}, there is well-order $\leq$ of $S$ respecting the tree order of $T$.
We show that $\leq$ witnesses countable separation number.
Let $s \in S$ be arbitrary.
Note that the finite set $\ODown{s}_T$ is contained in $\ODown{s}_\leq$ by the choice of $\leq$.
The component $K_s^T \in \cK(S \setminus \ODown{s}_T)$ containing $s$ equals $\Up{s}_T$ by \cref{prop:equivalence_weak_normal_tree} and since $T$ is normal.
By choice of $\leq$, $K_s^T = \Up{s}_T$ is contained in $\Up{s}_\leq$. Therefore $K_s^T$ avoids $\ODown{s}_\leq$, which completes the proof of the forward direction.

For the backward direction, we have to prove that every connected connectoid with countable separation number has a normal spanning tree.
We prove this statement by strong induction on the size of $S$. Let $(S, \cC)$ be a connected connectoid of size $\kappa$ and assume that the statement is true for every connected connectoid of size less than $\kappa$.

If $\kappa$ is countable, $S$ is a countable union $\bigcup_{s \in S} \{s\}$ of dispersed sets and by \cref{thm:jung} there exists a normal spanning tree of $(S, \cC)$.
Thus we can assume that $\kappa$ is uncountable.
By \cref{thm:decomp}, there is a continuous increasing sequence $(S_i)_{i < \cf(\kappa)}$ of subsets of $S$ with $S = \bigcup_{i < \cf(\kappa)} S_i$ such that for every $i < \cf(\kappa)$
\begin{itemize}
    \item $|S_i| < \kappa$,
    \item every component in $\cK(S \setminus S_i)$ has finite adhesion to $S_i$, and
    \item $S_i$ is $( < \omega)$-strong.
\end{itemize} 

First of all, we show that $S_i$ is a countable union of sets that are dispersed with respect to $(S, \cC)$ for every $i < \cf(\kappa)$.  The torso $(S_i, \cC \restriction_{S_i}):= (S_i, \{C \cap S_i: C \in \cC \})$ has countable separation number, by \cref{prop:torso}.
Thus by the induction hypothesis $(S_i, \cC \restriction_{S_i})$ has a normal spanning tree.
By \cref{thm:jung}, $S_i$ is a countable union $\bigcup_{n \in \NN} X_n$ of sets that are dispersed with respect to $(S_i, \cC \restriction_{S_i})$.
We show that $X_n$ is also dispersed with respect to $(S, \cC)$ for every $n \in \NN$, which proves that $S_i$ is a countable union of sets that are dispersed with respect to $(S, \cC)$.

Suppose for a contradiction that there exists a necklace $N$ in $(S, \cC)$ that has infinite intersection with $X_n$.
Using the technique of \cref{sec:subconnectoid_minor}, we can construct a witness $(H_k)_{k \in \NN}$ of $N$ such that $H_k \cap H_{k + 1}  \cap X_n \neq \emptyset$ for every $k \in \NN$.
We prove that $(H_k \cap S_i)_{k \in \NN}$ is a witness of a necklace in $(S_i, \cC \restriction_{S_i})$.
Since $X_n \subseteq S_i$, the set $H_k \cap {S_i}$ is nonempty for every $k \in \NN$ and $H_k \cap S_i \in \cC \restriction_{S_i}$ by the choice of $\cC \restriction_{S_i}$.
Further, $(H_k \cap {S_i}) \cap (H_\ell \cap {S_i}) \neq \emptyset$ holds if and only if $|k - \ell | \leq 1$ for every $k, \ell \in \NN$, by the choice of $(H_k)_{k \in \NN}$ and since $X_n \subseteq S_i$.
Thus $(H_k \cap S_i)_{k \in \NN}$ is a necklace in $(S_i, \cC \restriction_{S_i})$ with infinite intersection with $X_n$, contradicting the dispersedness of $X_n$ in $(S, \cC \restriction_{S_i})$.

Now we turn our attention to the construction of the desired normal spanning tree of $(S, \cC)$.
More precisely, we construct recursively an increasing sequence $(T_i)_{i < \cf(\kappa)}$ of rooted normal trees such that $T_i$ contains $S_i$ cofinally and every component in $\cK(S \setminus V(T_i))$ has finite adhesion to $V(T_i)$ for every $i < cf(\kappa)$.
Then $\bigcup_{i < \cf(\kappa)} T_i$ is a normal spanning tree of $(S, \cC)$.

Set $T_0 := \emptyset$.
We pick an arbitrary ordinal $1 \leq i < \cf(\kappa)$ and assume that $T_j$ has been constructed for every $j < i$.
If $i$ is a limit, set $T_i:= \bigcup_{j < i} T_j$. Note that $T_i$ contains $S_i$ cofinally since the sequence $(S_i)_{i < \cf(\kappa)}$ is continuous. By assumption, every component in $\cK(S \setminus S_i)$ has finite adhesion to $S_i$. Then \cref{prop:cofinal_finite_adhesion} ensures that every component in $\cK(S \setminus V(T_i))$ has finite adhesion to $V(T_i)$.

If $i$ is a successor, we use \cref{prop:extension_normal_tree} to construct the tree $T_i$:
Let $K$ be an arbitrary component in $\cK(S \setminus V(T_{i - 1}))$.
Since $S_i$ is a countable union of dispersed sets with respect to $(S, \cC)$, also $S_i \cap K$ is a countable union of dispersed sets with respect to the induced subconnectoid $(K, \cC^K)$ on $K$.
By \cref{thm:jung}, there is a normal tree $T_K$ in $(K, \cC^K)$ containing $S_i \cap K$.
By deleting all vertices $t \in T_K$ for which $\Up{t}_{T_K}$ avoids $S_i \cap K$, we can assume that $T_K$ contains $S_i \cap K$ cofinally.
Thus the normal tree $T_i$ obtained by applying \cref{prop:extension_normal_tree}  to $T_{i - 1}$ and $(T_K)_{K \in \cK(S \setminus V(T_{i - 1}))}$ contains $S_i$ cofinally.
By \cref{prop:cofinal_finite_adhesion}, every component in $\cK(S \setminus V(T_i))$ has finite adhesion to $V(T_i)$.
This completes the backward direction.
\end{proof}
We can deduce by \cref{prop:torso}:
\begin{cor}
	The existence of normal spanning trees is closed under taking connected torsos at $(< \omega)$-strong subsets.
\end{cor}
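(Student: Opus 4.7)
The plan is to chain together two results already established in this section. Suppose $(S, \cC)$ is a connected connectoid admitting a normal spanning tree, and let $\hat S \subseteq S$ be a $(< \omega)$-strong subset such that the torso $(\hat S, \hat \cC)$ of $(S, \cC)$ at $\hat S$ is connected. I want to produce a normal spanning tree of $(\hat S, \hat \cC)$.

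First I would apply the forward direction of \cref{thm:countable_separation_number} to $(S, \cC)$: since $(S, \cC)$ has a normal spanning tree, it has countable separation number. Next I would invoke \cref{prop:torso}, which says precisely that countable separation number is inherited by torsos at $(< \omega)$-strong subsets; hence $(\hat S, \hat \cC)$ has countable separation number. Finally, since $(\hat S, \hat \cC)$ is connected by hypothesis, the backward direction of \cref{thm:countable_separation_number} yields a normal spanning tree of $(\hat S, \hat \cC)$, as desired.

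There is no real obstacle to overcome here: the content of the corollary lies entirely in \cref{thm:countable_separation_number} and \cref{prop:torso}, and the proof is a one-line composition of these two facts. The only thing to check is that the hypotheses match, namely that the connectedness assumption on the torso is exactly what is needed to apply the backward direction of \cref{thm:countable_separation_number} (whose statement is phrased for connected connectoids).
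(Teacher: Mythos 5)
Your proof is correct and is exactly the argument the paper intends: the corollary is stated immediately after \cref{thm:countable_separation_number} with the remark ``We can deduce by \cref{prop:torso}'', i.e.\ normal spanning tree $\Rightarrow$ countable separation number $\Rightarrow$ (via \cref{prop:torso}) countable separation number of the torso $\Rightarrow$ normal spanning tree of the connected torso. No gaps; your observation that connectedness of the torso is needed to invoke the backward direction is the right hypothesis check.
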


\section{Open problems} \label{sec:open_problems}
In this section we present open problems regarding the characterisation of normal spanning trees in connectoids, which are inspired by results for undirected graphs.

We begin with an open problem regarding the characterisation via countable colouring number.
The definition of countable colouring number \cite{pitz2021proof} can be generalised to connectoids in the following way:
A connectoid $(S, \cC)$ has \emph{countable colouring number} if there exists a well-order $\leq$ of $S$ such that for every $s \in S$ there is a finite set $X \subseteq \ODown{s}_\leq$ such that $\{s\}$ is a component in $\cK(S \setminus (X \cup \OUp{s}_\leq))$.

Pitz proved that a connected undirected graph $G$ has a normal spanning tree if and only if every minor of $G$ has countable colouring number \cite{pitz2021proof}*{Theorem~1.1}.
This statement cannot be transfered to connectoids verbatim using our definition of minor:
\begin{prop}\label{prop:counterexample_colouring_number}
	There exists a directed graph $D$ such that the corresponding connected connectoid $(V(D), \cC)$ does not have a normal spanning tree and every minor of $(V(D), \cC)$ has countable colouring number.
\end{prop}
\begin{proof}
	We begin by constructing the directed graph $D$ (see \cref{fig:ccn_and_minor_not}).
	We define $A:=\{a_i: i < \omega\}$, $B:=\{b_j : j < \omega_1\}$ and $C:= \{c_{j, k} : j < \omega_1, k < \omega  \}$.
	Let $D$ be the directed graph with $V(D):= A \cup B \cup C$ and
	$E(D) := \{a_ib_j: i < \omega, j < \omega_1\} \cup  \{b_jc_{j, k}: j < \omega_1, k < \omega\} \cup \{c_{j,k}a_i: i < \omega, j < \omega_1, k < \omega\}$.
	
	We show that $V(D)$ is not a countable union of dispersed sets.
	Then by \cref{thm:jung}, $(V(D), \cC)$ does not have a normal spanning tree.
	Suppose for a contradiction that $V(D)$ is a countable union of dispersed sets.
	Then there exists a dispersed set $X$ that contains infinitely many elements of $B$.
	Let $(j(n))_{n \in \NN}$ be a sequence of distinct ordinals such that $b_{j(n)} \in B \cap X$ for every $n \in \NN$.
	For every $n \in \NN$, let $H_n \in \cC$ be induced by the directed cycle on $a_n, b_{j(n)}, c_{j(n),0}, a_{n+1}, b_{j(n+1)}, c_{j(n+1),0}, a_n$.
	Then $(H_n)_{n \in \NN}$ is a necklace that contains infinitely many elements of $B \cap X$, contradicting the dispersedness of $X$.
	
	Let $(\cP, \cC')$ be an arbitrary minor of $(V(D), \cC)$.
	Note that $\cP$ is a partition of a subset of $V(D)$.
	We show that $(\cP, \cC')$ has countable colouring number.
	Let $\leq$ be some well-order of $\cP$ such that $\{P \in \cP: P \cap A \neq \emptyset \}$ and $\{P \in \cP: P \cap (A \cup B) \neq \emptyset \}$ form initial segments and $\{P \in \cP: P \cap A \neq \emptyset \}$ has order type $\omega$.
	
	Since every non-singleton element of $\cC$ contains an element of $A$, every element in $\cP \subseteq \cC$ either intersects $A$ or is a singleton.
	Every element of $\cP$ intersecting $A$ has finite $\leq$-down-closure, by the choice of $\leq$.
	Thus every such vertex satisfies the condition for countable colouring number.
	
	If $\{b_j\}\in \cP$ for some $j < \omega_1$, then the singletons $\{c_{j,k}\}$ for $k < \omega$ have to be elements of $\cP$ since every non-singleton element of $\cC$ containing some $c_{j,k}$ contains $b_j$.
	Furthermore, every $\{c_{j,k}\}$ for $k< \omega$ is contained in $\Up{\{b_j\}}_\leq$, by the choice of $\leq$.
	Since every non-singleton connected set in $\cC$ containing $b_j$ has to contain some $c_{j,k}$, every non-singleton connected set in $\cC'$ containing $\{b_j\}$ has to contain some $\{c_{j,k}\}$.
	Thus $\{b_j\}$ satisfies the condition for countable colouring number.
	
	Finally, since every non-singleton connected set in $\cC$ containing $c_{j,k}$ for some $j < \omega_1$ and some $k < \omega$ has to contain $b_j$, every non-singleton connected set in $\cC'$ containing $\{c_{j,k}\}$ has to contain the element $P \in \cP$ with $b_j \in P$.
	Thus every singleton $\{c_{j,k}\}$ satisfies the condition for countable colouring number.
	This completes the proof.
\end{proof}

\begin{center}
	\begin{figure}[ht]
		\begin{tikzpicture}

			\foreach \x in {0,3} \draw[fill] (\x,0) circle [radius=.05];
			
			\foreach \y in {0,0.5} \draw[fill] (6,\y) circle [radius=.05];
			
			\draw[fill, opacity=0.4, gray] (6,1) circle [radius=.05];
			
			\draw[edge] (0,0) -- (2.95,0);
			\draw[edge] (3,0) -- (5.95,0);
			\draw[edge] (3,0) -- (5.95,0.5);
			\draw[edge, path fading=east] (3,0) -- (5.95,1);
			
			\draw[edge] (6,0) to [out=160,in=20] (0.03,0.03);
			\draw[edge] (6,0.5) to [out=160,in=20] (0.03,0.03);
			\draw[edge, path fading=east] (6,1) to [out=160,in=20] (0.03,0.03);
			
			\draw (0,-0.3) node {$a_i$};
			\draw (3,-0.3) node {$b_j$};
			\draw (6.5,0) node {$c_{j,0}$};
			\draw (6.5,0.5) node {$c_{j,1}$};
			\draw[opacity=0.4] (6.5,1) node {$c_{j,2}$};
		\end{tikzpicture}
	\caption{The schema of the directed graph $D$ in \cref{prop:counterexample_colouring_number} for some $i < \omega$ and $j<\omega_1$.}
\label{fig:ccn_and_minor_not}
	\end{figure}
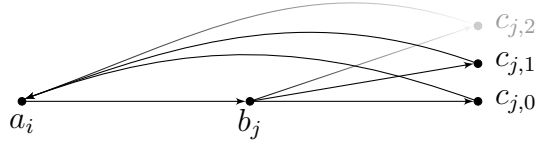
\end{center}

\noindent
\cref{prop:counterexample_colouring_number} suggests that we should weaken the definition of minor if we want to find a characterisation via countable colouring number.
But we do not want to weaken the notion of minor too much:
We still want the property of having a normal spanning tree to be preserved under taking minors.

The following type of minor seems to be a good candidate:
Given a connectoid $(S, \cC)$, some partition $\mathcal{P}$ of $S$ and a family $(c_P)_{P \in \mathcal{P}}$ of elements $c_P \in P$, let $\mathcal{F}^\mathcal{P}$ be the set of finite sets $X \subseteq \mathcal{P}$ for which there is $C \in \cC$ with $\{c_P: P \in X \} \subseteq C \subseteq \bigcup X$.
Note that $\mathcal{F}^\mathcal{P}$ satisfies~\labelcref{itm:bonding_closed,itm:contains_singletons}.
Let $(\mathcal{P}, \cC^\mathcal{P})$ be the connectoid induced by $\mathcal{F}^\mathcal{P}$.
We say $(\mathcal{P}, \cC^\mathcal{P})$ is obtained by \emph{weak contraction} of $(S, \cC)$.
A connectoid $(S'', \cC'')$ is a \emph{weak minor} of $(S, \cC)$ if there exists a subconnectoid $(S', \cC')$ of $(S, \cC)$ such that $(S'', \cC'')$ is obtained by weak contraction of $(S', \cC')$.\footnote{In undirected graphs, every graph-theoretic minor is a weak minor but weak minors are in general not graph-theoretic minors.}

Note that every necklace in a weak minor induces a necklace in the host connectoid.
Thus weak minors inherit the property of having a normal spanning trees by \cref{thm:jung}.
Further, given a normal spanning tree $T$ of a connectoid $(S, \cC)$, by~\cref{prop:well-order_respecting} there is a well-order $\leq$ of $S$ respecting the tree order of $T$.
Then $\leq$ witnesses countable colouring number of $(S, \cC)$.
This implies that every weak minor of a connectoid with a normal spanning tree has countable colouring number.

\begin{problem}
	Let $(S, \cC)$ be a connected connectoid.
	If every weak minor of $(S, \cC)$ has countable colouring number, does there exist a normal spanning tree of $(S, \cC)$?
\end{problem}

We note that the connectoid in the proof of~\cref{prop:counterexample_colouring_number} contains a weak minor without normal spanning tree:
Let $\mathcal{P}$ be the partition consisting of $A_i:=\{a_i\}$ for $i < \omega$ and $B_j:=\{b_j\} \cup \{c_{j,k} :k < \omega\}$ for $j < \omega_1$.
Furthermore, let $c_{A_i}:= a_i$ and $c_{B_j}:= b_j$.
We set $\mathcal A:= \{A_i: i < \omega\}$ and $\mathcal B:= \{B_j: j < \omega_1\}$.
Note that for every $A_i \in \mathcal A$ and every $B_j \in \mathcal B$ there is a connected set in $\cC$ that is subset of $A_i \cup B_j$ and contains $a_i$ and $b_j$.
Thus the set $\mathcal{F}^\mathcal{P}$ contains $\{A_i,B_j\}$ for every $A_i \in \mathcal A$ and every $B_j \in \mathcal B$.
Given an arbitrary well-order $\leq$ of $\mathcal{P}$, there is $P \in \mathcal{P}$ such that either $P \in \mathcal A$ and $\ODown{P}_\leq$ contains infinitely many elements of $\mathcal B$ or $P \in \mathcal B$ and $\ODown{P}_\leq$ contains infinitely many elements of $\mathcal A$.
In both cases, there is no finite set $X \subseteq \ODown{P}_\leq$ such that $\{P\}$ is a component in $\cK(S \setminus (X \cup \OUp{P}_\leq))$.

Pitz proved \cite{pitz2021proof}*{Theorem~1.2} a forbidden minor characterisation of the existence of normal spanning trees in undirected graphs by combining his characterisation of normal spanning trees via countable colouring number with the first author, Carmesin, Komj\'{a}th and Reiher's~\cite{bowler2019colouring} characterisation of countable colouring number in undirected graphs.

\begin{problem}
	Does there exist a forbidden (weak) minor characterisation of the existence of normal spanning trees?
\end{problem}

\section*{Acknowledgement}

The second author gratefully acknowledges support by a doctoral scholarship of the Studienstiftung des deutschen Volkes.

\bibliography{ref.bib}

\end{document}